\newtheorem{theorem}{Theorem}[section]
\newtheorem{lemma}[theorem]{Lemma}
\newtheorem{proposition}{Proposition}
\newtheorem{remark}{Remark}
\title{Dynamics of a
  % stochastically perturbed
  prey-predator system
	with modified Leslie-Gower and Holling type II schemes incorporating
	a prey refuge}
      \date{}
      \author{}
\newcommand\tq{;\,} %% tel que
\newcommand\R{\mathbb R}%{\bf R}
\renewcommand\d{\mathrm{d}} % différentielle
\newcommand\dd{\,\d}
\newcommand\abs[1]{\left\vert{#1}\right\vert}
\newcommand\accol[1]{\left\{{#1}\right\}}
\newcommand\CCO[1]{\left({#1}\right)}
\newcommand\croche[1]{\left[{#1}\right]}
\newcommand\un[1]{\,\rlap{{1}}\kern.22em \mbox{l}_{#1}} %% fonction
\newcommand\jcb[1]{\mathcal{J}(#1)} % jacobien
\newcommand\trace{\mathop{\mathrm{Trace}}}
\newcommand\invrg{\mathcal{A}} % région invariante
\newcommand\invrgB{\mathcal{B}} %
\newcommand\tangency{r} % ordonnée d'un point de tangence
\newcommand\crit[1]{{{#1}^*}}
\newcommand\Ec{\crit{E}}
\newcommand\xc{\crit{x}}
\newcommand\yc{\crit{y}}
\newcommand\zc{\kappa}
\newcommand\yd{{\tilde{y}}}
\newcommand\xd{l} % abscisse du point d'intersection de y=k_2 et y=U(x)
\newcommand\rectangle{\mathcal{R}}
\newcommand\gcrit[1]{#1}%{{\underline{#1}}}
\newcommand\xx{\gcrit{x}}
\newcommand\yy{\gcrit{y}}
\newcommand\EE{\gcrit{E}}
\newcommand\XX{{X}} % coordonnées centrées autour d'un point critique
\newcommand\YY{{Y}}
\newcommand\cc{\rho}%%{\mathfrak{c}} % coefficients
\newcommand\KX{{\mathfrak{u}}}%coordonnées centrées autour d'un point critique
\newcommand\KY{{\mathfrak{v}}}
\newcommand\Kf{{\mathfrak{f}}}
\newcommand\gfun{{\mathfrak{g}}}
\newcommand\Cf{{\bm{f}}}
\newcommand\Cg{{\bm{g}}}
\newcommand\KFF{F}
\newcommand\KGG{G}
\newcommand\Kc{c} % =a\cc
\newcommand\vectorfield{\bm{v}} % necessite le paquetage bm
\newcommand\vfA{{\mathcal{V}_1}}%{f} % coordonnées de $\vectorfield$
\newcommand\vfB{{\mathcal{V}_2}}%{g}
\newcommand\ii{\bm{i}}
\newcommand\jj{\bm{j}}
\newcommand\uu{\bm{u}}
\newcommand\vv{\bm{v}}
\newcommand\dzo{\delta}
\newcommand\DZO{\gamma}
\newcommand\xpmin{\underline{x}}
\newcommand\xpmax{\overline{x}}
\newcommand\RR{
4\alpha_{{2}}^{3}\alpha_{{0}}
-\alpha_{{2}}^{2}\alpha_{{1}}^{2}
+4\alpha_{{1}}^{3}
-18\alpha_{{2}}\alpha_{{1}}\alpha_{{0}}
+27\alpha_{{0}}^{2}
}
\newcommand\roh{\rho}
\newcommand\Pf{P}
\newcommand\Pg{Q}
\newcommand\tk{t_{0}}
\newcommand\TTI[1]{\textrm{\bf #1}}
\newcommand\xxi{\TTI{x}}
\newcommand\yyi{\TTI{y}}
\newcommand\DD{D}
\newcommand\nroot{\mathfrak{p}} % nb de racines de partie reelle positives
\newcommand\nrootp{\mathfrak{n}} % nb de racines positives distinctes
\newcommand\nrootd{\nrootp}%{\mathfrak{k}} % nb de racines positives
\newcommand\cf{v_1}%{f}
\newcommand\cg{v_2}%{g}
\newcommand\Dulac{D}
\newcommand\charac{\chi} \newcommand\discrimC{\Delta_\charac}
\newcommand\discrimRp{\Delta_{R'}}
\newcommand\discrimQ{\Delta}
\newcommand\tribu{\mathcal{F}}
\DeclareMathOperator{\prob}{\mathrm{P}}
\DeclareMathOperator{\expect}{\mathrm{E}}
\newcommand\xu{u}%{\hat{x}}%{x^{\mathrm{u}}}
\newcommand\yu{v}%{\hat{y}}%{y^{\mathrm{u}}}
\newcommand\xdd{\check{u}}%{\hat{x}}%{x^{\mathrm{u}}}
\newcommand\ydd{\check{v}}%{\hat{y}}%{y^{\mathrm{u}}}
\newcommand\txu[1]{{\tau}_1^{(#1)}}
\newcommand\tyu[1]{{\tau}_2^{(#1)}}
\newcommand\txdd[1]{\check{\tau}_1^{(#1)}}
\newcommand\tydd[1]{\check{\tau}_2^{(#1)}}
\numberwithin{equation}{section}
\begin{document}
	\maketitle

% Enter the first author's name and address:
\centerline{\scshape Safia Slimani\footnote{Supported by TASSILI research program 16MDU972 between the University of Annaba (Algeria) and the University of Rouen (France)}}
\medskip
{\footnotesize
% please put the address of the first author
 \centerline{Normandie Univ, Laboratoire Rapha\"el Salem}
 \centerline{UMR CNRS 6085, Rouen, France% \\
 % \textit{email address}: {slimani\_safia@yahoo.fr}
}
   } % Do not forget to end the {\footnotesize by the sign }

\medskip

\centerline{\scshape Paul Raynaud de Fitte and Islam Boussaada}
\medskip
{\footnotesize
 % please put the address of the second  and third author
 \centerline{Normandie Univ, Laboratoire Rapha\"el Salem,}
 \centerline{UMR CNRS 6085, Rouen, France.}
 \centerline{PSA $\&$ Inria DISCO $\&$ Laboratoire des Signaux et
   Syst\`emes,}   \centerline{Universit\'e Paris Saclay,
   CNRS-CentraleSup\'elec-Universit\'e Paris Sud,}
\centerline{3 rue Joliot-Curie, 91192 Gif-sur-Yvette cedex, France}
% \centerline{Laboratoire des Signaux et Syst\`emes (L2S)
%      Sup\'elec-CNRS-Universit\'e Paris Sud, France, }
%    \centerline{and
% Laboratoire de Mod\'elisation et Calcul Scientifique (LMCS)}
%    \centerline{Institut Polytechnique des Sciences Avanc\'ees, France.}
}

\bigskip

% The name of the associate editor will be entered by an editorial staff
% "Communicated by the associate editor name" is not needed for special issue.
 %\centerline{(Communicated by the associate editor name)}

	\begin{abstract}
		We study a modified version of a prey-predator system
                with modified Leslie-Gower and Holling type II
                functional responses studied by M.A.~Aziz-Alaoui and
                M.~Daher-Okiye. The modification consists in
                incorporating a refuge for preys, and substantially
                complicates the dynamics of the system.
                %%%%%%%%%%%%%%%
                We study the local and global dynamics and
                the existence of cycles.
                %%%%%%%%%%%%
                We also
                investigate conditions for extinction or
                existence of a stationary
                distribution, in the case of a stochastic perturbation
                of the system.
	\end{abstract}

      \textbf{Keywords:} {Prey-predator, Leslie-Gower, Holling type II, refuge,
        Poincar\'e index theorem, stochastic differential,
        persistence, stationary distribution, ergodic.}
        
        \newpage
	\tableofcontents

\section{Introduction}
    We study a two-dimensional prey-predator system
    with modified Leslie-Gower and Holling type II
    functional responses.
    This system is a generalization of the system investigated in the
papers by
    M.A.~Aziz-Alaoui and M.~Daher-Okiye
\cite{Daher-Aziz03,Daher-Aziz03bologn}.

    Aziz-Alaoui and Daher-Okiye's model has been studied and
    generalized in numerous papers:
    models with spatial diffusion term \cite{camara2011waves,yafia1,yafia2,yafia3},
        with time delay \cite{nindjin,yafia,yafia0}, with stochastic perturbations
\cite{lv-wang2011asymptotic,lv-wang2012analysis,mandal-banerjee2013,liu2014stochastic_holling},
        or incorportaing a refuge for the prey
    \cite{chen-chen-xie2009refuge}, to cite but a few.

    A novelty of the present paper
    is that we add a refuge
        in a way which is different from
        \cite{chen-chen-xie2009refuge}, since the density of prey in our
    refuge
    is not proportional
    to the total density of prey.
    This kind of refuge entails a qualitatively different
    behavior of the solutions, even for a small refuge,
    contrarily to the type of refuge investigated in
    \cite{chen-chen-xie2009refuge}.  Let us emphasize that, even in the
case without refuge, our study provides new results.

In the first and main part of the paper (%Section
\Cref{sec:determinist}),
        we study the system of
	\cite{Daher-Aziz03,Daher-Aziz03bologn} with refuge, but without
	stochastic perturbation:
	\begin{equation}
		\label{eq:general}
		\left\{\begin{aligned}
			&\dot{\xxi}=\xxi(\rho_1-\beta \xxi)
		-\frac{\alpha_1\yyi(\xxi-\mu)_+}{\kappa_1+(\xxi-\mu)_+},\\
			&\dot{\yyi}=\yyi\CCO{\rho_2-\frac{\alpha_2\yyi}{\kappa_2+(\xxi-\mu)_+} }.
		\end{aligned}\right.
	\end{equation}
	In this system,
	\begin{itemize}
		\item $\xxi\geq 0$ is the density of prey,
		\item $\yyi\geq 0$ is the density of predator,
		\item $\mu\geq 0$ models a refuge for the prey, i.e,
		the quantity $(\xxi-\mu)_+:=\max(0,\xxi-\mu)$ is
		the density of prey which is accessible to the predator,
		\item $\rho_1>0$ (resp.~$\rho_2>0$) is the growth rate
                  of prey (resp.~of predator),
		\item $\beta>0$ measures the strength of competition
                  among individuals of the
		prey species,
		\item $\alpha_1>0$ (resp.~$\alpha_2>0$) is the rate of
		reduction of preys
		(resp.~of predators)
		\item $\kappa_1>0$ (resp.~$\kappa_2>0$) measures the
                  extent to which
		the  environment provides protection to the prey
                (resp.~to the predator).
	\end{itemize}
	When the predator is absent, the density of prey
        $\xxi$ satisfies a logistic equation and
	converges to $\frac{\rho_1}{\beta}$, so we assume that
	$$0\leq\mu<\frac{\rho_1}{\beta}.$$
%%%%%%%%%%%%%%%%%%%
        The last term in the right hand side of the first equation
          of \eqref{eq:general},
  which expresses the loss of prey population due to the predation,
  is a modified Holling type II functional response, where the
  modification consists in the introduction of the refuge $\mu$.
  The predation rate of the predators decreases when they are driven to
  satiety, so that the consumption rate of preys decreases
  when the density of prey increases.
  % Lajmiri This response function which models
% the consumption of preys by predators is such that the predation rate of predators
% increases when the preys are few and decreases when they reach their satiety.

  Similarly, if its favorite prey is absent (or hidden in the refuge),
  the predator has a
  logistic dynamic, which means that it
survives with other prey species, but with limited growth.
  The last term in the right hand side of the second equation,
  of \eqref{eq:general} is a modified Leslie-Gower functional
  response, see \cite{Leslie-Gower, pielou}.
  % \cite{Leslie-Gower} and \cite[page 91]{pielou}.
  Here,
  the modification lies in the addition of the constant $\kappa_2$, as
  in \cite{Daher-Aziz03,Daher-Aziz03bologn}, as well as in
the introduction of the refuge $\mu$. It models the loss of
predator population when the prey becomes less available, due its
rarity and the refuge.
%Yang2013 If this
% favorite food is lacking severely, the predator y will switch to other populations, but its growth will be limited.
%%%%%%%%%%%%%%%%%%%

	Setting, for $i=1,2$,
	\begin{gather*}
		%\label{eq:simplification}
		%t=\rho_1\tau,\
		x(t)=\frac{\beta}{\rho_1}\,\xxi\CCO{\frac{t}{\rho_1}},
		\ y(t)=\frac{\beta}{\rho_1}\,\yyi\CCO{\frac{t}{\rho_1}},\\
		m=\frac{\mu\beta}{\rho_1},\
		a=\frac{\alpha_1\rho_2}{\alpha_2\rho_1},\
		k_i=\frac{\kappa_i\beta}{\rho_1},\
		b=\frac{\rho_2}{\rho_1},
	\end{gather*}
	we get the simpler equivalent system
	\begin{equation}
		\label{eq:simple}
		\left\{\begin{aligned}
			&\dot{x}=x(1-x)-\frac{a y(x-m)_+}{k_1+(x-m)_+},\\
			&\dot{y}=b y\CCO{1-\frac{y}{k_2+(x-m)_+} },
		\end{aligned}\right.
	\end{equation}
	where $0\leq m<1$, all other parameters are positive,
	and $(x,y)$ takes its values in the quadrant
        $\R_+\times\R_+$.

        In this first part, we study the dynamics of
        \Cref{eq:simple}, which is complicated by
        the refuge parameter $m$.
        However, even in the case when $m=0$,
        we provide some new results.
        We first show the
        persistence and the existence of a compact attracting set.
        Then,
        we study in detail the equilibrium points (there can be 3 distinct
        non trivial such points when $m>0$) and their local
        stability.
        We also give sufficient conditions for
        the existence of a globally
        asymptotically stable equilibrium, and  we give some
        sufficient conditions for the absence of periodic orbits.
        A stable limit cycle may surround several limit points, as we
        show numerically.

	In a second part (%Section
\Cref{sec:stoch}),
        we study the stochastically perturbed system
	\begin{equation} \label{eq:stochastic}
	\left\{\begin{aligned}
dx(t)&=\CCO{x(t)(1-x(t))-\frac{a y(t)(x(t)-m)_+}{k_1+(x(t)-m)_+}}dt
        + \sigma_{1} x(t) dw_{1}(t),\\
dy(t)&=b y(t)\CCO{1-\frac{y(t)}{k_2+(x(t)-m)_+}}dt+\sigma_{2} y(t) dw_{2}(t),
	\end{aligned}\right.
	\end{equation}
	where $w=(w_1,w_2)$ is a standard Brownian motion defined on the
	filtered probability space
	$(\Omega,\tribu,(\tribu_{t}),\prob )$,
	and $\sigma_1$ and $\sigma_2$ are constant real numbers.
%%%%%%%%%%%%%%%%%%%%%%%%%%%%%%%
        This perturbation represents the environmental
          fluctuations. There are many ways
          to model the randomness of the environment, for
          example using random parameters in \Cref{eq:simple}.
          Since the right hand side of \Cref{eq:simple}
          depends nonlinearly on many parameters,
          the approach using It\^o stochastic differential equations
          with Gaussian centered noise
          models in a simpler way
          the fuzzyness of the solutions.
          % around their mean values,
          % that is, the
          % values taken by the deterministic system.
          The
          choice of a multiplicative noise in this context is
          classical, see \cite{may},
          and it has the great advantage over additive noise
            that solutions starting in the quadrant
            $[0,+\infty[\times[0,+\infty[$ remain in it.
          % Since the values of $x$ and $y$ are to take positive values,
          % it is also
          % natural to assume that small values of $x$ and/or $y$ are
          % less perturbed, so that
          % the noise has to be proportional to the values of $x$ and $y$.
          Furthermore, the independence of the Brownian motions $w_1$
          and $w_2$ reflects the independence of the parameters
          in both equations of \eqref{eq:simple}.
          % Of course, Stratonovich equations could also be considered,
          % as well as coloured noise instead of white noise.
          %It\^o vs Stratonovich
          %This choice is also based on simplicity.

          Another possible choice of stochastic perturbation
          would be to center the
          noise on an equilibrium point of the
          deterministic system, as in \cite{BC}. But we shall see
          in \Cref{theo:nbdepoints} that
          \Cref{eq:simple} may have three distinct equilibrium
          points. Furthermore,
          as in the case of additive noise,
          this type of noise
          would allow the solutions to have excursions outside the
          quadrant $[0,+\infty[\times[0,+\infty[$,
          which of course would be unrealistic.
       %%%%%%%%%%%%%%%%%%%%%%%%%%%%%%%%

    %In this second part of the paper,
       We show in \Cref{sec:stoch}
       the existence and uniqueness of the global positive
        solution with any initial positive value of the stochastic
        system \eqref{eq:stochastic},
        and we show that, when the diffusion coefficients
        $\sigma_1>0$ and $\sigma_2>0$ are small,
        the solutions to \eqref{eq:stochastic} converge to a unique
        ergodic stationary distribution, whereas, when they are large,
        the system \eqref{eq:stochastic} goes asymptotically to
        extinction.
        Small values of $\sigma_1$ and
          $\sigma_2$ are more interesting for ecological modeling,
          because they make solutions of \eqref{eq:stochastic}
          closer to the prey-predator dynamics.
        The effect of such a small or moderate perturbation is the
        disparition of all equilibrium points of the open quadrant
        $]0,+\infty[\times]0,+\infty[$, replaced by a unique
        equilibrium, the stationary ergodic distribution, which is an
        attractor.

      % Finally, in \Cref{sec:simul},
      The last part of the paper is \Cref{sec:simul}, where
        we make numerical simulation to illustrate our results.

	%%%%%%%%%%%%%%%%%%%%%%%%%%%%%%%%%%%%%%%%%%%%%%%%%%%%%%%%%%%%%%%%%%
	\section{Dynamics of the deterministic system}
        \label{sec:determinist}
	In this section, we study the dynamics of \eqref{eq:simple}.

        Throughout, we denote by $\vectorfield$ the vector field
	associated with \eqref{eq:simple}, and
         $$\vectorfield=v_1\frac{\partial}{\partial x}
                       +v_2\frac{\partial}{\partial y},$$
        so that \eqref{eq:simple} reduces to $\bigl(\dot{x}=v_1$ and
        $\dot{y}=v_2\bigr)$.

	The right hand side of \eqref{eq:simple} is locally Lipschitz,
	thus, for any initial condition, \eqref{eq:simple} has a unique
	solution defined on a maximal time interval.
	
	Furthermore, the axes are invariant manifolds of
	\eqref{eq:simple}:
	\begin{itemize}
		\item If $x(0)=0$, then $x(t)=0$ for every $t$, and $\dot{y}=by(1-y/k_2)$ yields
		$$y(t)=\frac{y(0)k_2}{k_2+y(0)(e^{bt}-1)},$$
		thus $\lim_{t\rightarrow+\infty}y(t)=k_2$ if $y(0)>0$.
		
		\item If $y(0)=0$, then $y(t)=0$ for every $t$, and $\dot{x}=x(1-x)$ yields
		$$x(t)=\frac{x(0)}{1+x(0)(e^{t}-1)},$$
		thus $\lim_{t\rightarrow+\infty}x(t)=1$ if $x(0)>0$.
	\end{itemize}
	From the uniqueness theorem for ODEs, we deduce that the
	open quadrant $]0,+\infty[\times ]0,+\infty[ $ is stable,
	thus there is no extinction of any species in finite time.

	\subsection{Persistence and compact attracting set}%
	\label{subsec:persistence}
	
	% compact trapping domain ? (terminologie de Broer et al, 2007)
	
        The next result shows that there is no explosion of the system \eqref{eq:simple}.
	It also shows a qualitative difference brought by the refuge:
	when $m=0$, the density of prey
	may converge to $0$, whereas, when $m>0$, the system \eqref{eq:simple}
	is always uniformly persistent. % in the positive quadrant.
		
	Let %us show that the compact set
	$$\invrg=\accol{(x,y)\in\R^2 \tq m\leq x\leq 1,\ k_2\leq y< L},$$
	where $L=1+k_2-m$.
	%Our first result shows a qualitative change induced by the refuge.
	%%%%%%%%%%%%%%%%%%%% PROPOSITION A INVARIANT %%%%
	\begin{theorem}\label{theo:Ainvariant}
		\begin{enumerate}[(a)]
			\item\label{item:invariant}
			The set $\invrg$ is invariant for \eqref{eq:simple}.
			Furthermore, if the initial condition  $(x(0),y(0))$ is in
			the open quadrant $]0,+\infty[\times
                        ]0,+\infty[ $, we have
                        \begin{equation}
				\label{eq:limsupinf}
				\left\{ \begin{aligned}
					m&\leq \liminf_{t\rightarrow+\infty}x(t)
					\leq \limsup_{t\rightarrow+\infty}x(t)
					\leq 1,\\
					k_2&\leq \liminf_{t\rightarrow+\infty}y(t)
					\leq \limsup_{t\rightarrow+\infty}y(t)
					\leq L.
				\end{aligned} \right.
			\end{equation}

			\item\label{item:persistent}
			In the case when $m>0$, for any initial
                        condition  $(x(0),y(0))$ in
			the open quadrant $]0,+\infty[\times
                        ]0,+\infty[ $,
                        the solution $(x(t),y(t))$ enters $\invrg$ in finite time.
                        In particular, the system \eqref{eq:simple} is
                        uniformly persistent.

			\item\label{item:Am0}
			In the case when $m=0$, for any $\epsilon>0$ such that
                        $k_2-\epsilon>0$,
                        the compact set	
                        $[0,1]\times[k_2-\epsilon,L]$ is invariant,
                        and,
                        for any initial
                        condition  $(x(0),y(0))$ in
			the open quadrant $]0,+\infty[\times]0,+\infty[ $,
			the solution $(x(t),y(t))$ enters
                        $[0,1]\times[k_2-\epsilon,L]$ in finite time.
                        Furthermore:
                        \begin{enumerate}[(i)]
                          \item\label{item:Am0-i} If $aL<k_1$,
                           the system \eqref{eq:simple} is
			   uniformly persistent.
                        More precisely,
                        %for any initial condition
                        if $(x(0),y(0))\in]0,+\infty[\times]0,+\infty[$, we have
			\begin{equation}
		           \label{eq:m0liminf}
		           \liminf_{t\rightarrow+\infty}x(t)\geq \frac{k_1-aL}{k_1}.
			%\text{ and }\liminf_{t\rightarrow+\infty}y(t)=k_2.
			\end{equation}

                          \item\label{item:Am0-ii} If $ak_2< k_1\leq aL$,
                                %there is no extinction.
                                the system \eqref{eq:simple} is
                                 uniformly weakly
                                  persistent. More precisely, if
                                  % any initial condition
                        $(x(0),y(0))\in]0,+\infty[\times]0,+\infty[$, we have
			\begin{equation}%{multline}
		           \label{eq:m0limsup}
		   \limsup_{t\rightarrow+\infty}x(t)%\\
                       \geq
                         \min\CCO{\frac{k_1}{a}-k_2,
                     \frac{1-k_1-a+\sqrt{(1-k_1-a)^2+4(k_1-ak_2)}}{2}
                                  }.
			\end{equation}%{multline}
                          \item\label{item:Am0-iii-special}
                          If $k_1=ak_2$, then:
                          \begin{itemize}
                          \item If $1-k_1-a>0$, %$(1-k_1)/a> 1$,
                          the system \eqref{eq:simple} is
                               uniformly  weakly persistent.
                          More precisely, if
                        $(x(0),y(0))\in]0,+\infty[\times]0,+\infty[$, we have
                        \begin{equation}
                          \label{eq:limsup-ak2=k1}
                          \limsup_{t\rightarrow+\infty}x(t)\geq 1-k_1-a.
                        \end{equation}

                          \item If $1-k_1-a\leq 0$, %$(1-k_1)/a\leq 1$,
                            the point $E_2=(0,k_2)$ is globally
                            attracting, thus the prey becomes
                            extinct asymptotically for any initial
                            condition in $]0,+\infty[\times]0,+\infty[$.

                          \end{itemize}

                          \item\label{item:Am0-iii} If $k_1< ak_2$, the
                            point $E_2=(0,k_2)$ is globally
                            attracting, thus the prey becomes
                            extinct in infinite time for any initial
                            condition in $]0,+\infty[\times]0,+\infty[$.

                        \end{enumerate}
		\end{enumerate}
	\end{theorem}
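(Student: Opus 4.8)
My plan is to reduce the whole statement to one–dimensional comparison arguments, exploiting that each of the two equations of \eqref{eq:simple} is a logistic equation whose coefficients are modulated by the other coordinate. For the upper bound on $x$ I would use $\dot x\le x(1-x)$ (the predation term being nonnegative) and compare with $\dot X=X(1-X)$, giving $\limsup_t x(t)\le 1$; on the line $x=1$ one has $\dot x=-\,a y(1-m)/(k_1+1-m)<0$ as soon as $y>0$, so $\{x\le1\}$ is forward invariant. For the lower bound on $y$ I would use $k_2+(x-m)_+\ge k_2$, whence $\dot y\ge b y\CCO{1-y/k_2}$ and $\liminf_t y(t)\ge k_2$, with $\dot y\ge0$ on $y=k_2$; the bound $\limsup_t y(t)\le L$ then follows because, once $x\le 1+\epsilon$, the carrying capacity $k_2+(x-m)_+$ is $\le L+\epsilon$. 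Finally $\liminf_t x(t)\ge m$ comes from $\dot x=x(1-x)>0$ on $0<x<m$ together with $\dot x=m(1-m)\ge0$ on $x=m$. Assembling these four scalar estimates gives \eqref{eq:limsupinf}, and the same boundary computations (Nagumo's criterion) give the invariance of $\invrg$; the one delicate point is the half–open edge $y=L$, where $\dot y\le0$ but vanishes at the corner $x=1$, so I would argue that $x=1$ cannot be sustained (there $\dot x<0$), whence $y$ is pushed strictly below $L$.

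For part (b) these four boundary inequalities become strict when $m>0$ (because $m(1-m)>0$ and $y>0$ throughout the open quadrant), so each coordinate reaches its target interval in finite time by the comparison equations above and then cannot leave; this yields entry into $\invrg$ in finite time and uniform persistence. Part (c) for $m=0$ is identical for the invariance and finite–time entry into $[0,1]\times[k_2-\epsilon,L]$, the left edge $x=0$ now being invariant rather than repelling.

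The persistence subcases I would treat by refining the lower estimate on $x$. In case (i), $aL<k_1$, I would use $y\le L$ (eventually) and $k_1+x\ge k_1$ to get $\dot x\ge x\CCO{1-x-aL/k_1}$ and compare with the logistic equation of carrying capacity $(k_1-aL)/k_1>0$, which yields \eqref{eq:m0liminf}. In case (ii), $ak_2<k_1\le aL$, weak persistence follows from the observation that $x\to0$ would force $y\to k_2$, after which $1-ak_2/k_1>0$ would make $x$ grow near the origin, a contradiction; for the quantitative bound I would apply a fluctuation argument at a sequence of times where $x$ approaches $\ell:=\limsup_t x(t)>0$, at which $1-x-ay/(k_1+x)\to0$, and combine it with the envelope estimate $\limsup_t y(t)\le k_2+\ell$ (comparison in the $y$–equation) to get $(1-\ell)(k_1+\ell)\le a(k_2+\ell)$, i.e. the quadratic inequality whose positive root is the second term of \eqref{eq:m0limsup}; the minimum with $k_1/a-k_2$ records the complementary, cruder estimate valid when $\ell<k_1/a-k_2$. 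The boundary case $k_1=ak_2$ with $1-k_1-a>0$ is the degenerate limit of this computation, the quadratic collapsing to $\ell\CCO{\ell-(1-k_1-a)}\ge0$ and giving \eqref{eq:limsup-ak2=k1}.

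The extinction subcases ($k_1<ak_2$, and $k_1=ak_2$ with $1-k_1-a\le0$) are where the real work lies, and they are the main obstacle. Local attractivity of $E_2=(0,k_2)$ is immediate from the linearization, whose eigenvalues are $1-ak_2/k_1<0$ and $-b$. To upgrade this to global attractivity I would study the two nullclines, the prey nullcline $y=\phi(x):=(1-x)(k_1+x)/a$ and the predator nullcline $y=k_2+x$, which meet only at $(0,k_2)$; the key step is to show $\phi(x)<k_2+x$ for all $x\in(0,1]$ (equivalently, that $D(x):=(1-x)(k_1+x)-a(k_2+x)$ has no zero in $(0,1)$), which places the whole asymptotic strip $\{y\ge k_2\}$ in the region where $\dot x<0$, so that $x(t)\to0$ and then $y(t)\to k_2$. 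To make the convergence rigorous I would either construct a Lyapunov function or show that the Cesàro average $\langle\,\cdot\,\rangle_t=\frac1t\int_0^t\,\cdot\,\dd s$ of $\dot x/x=1-x-ay/(k_1+x)$ is asymptotically negative, the identity $\frac1t\ln y(t)\to0$ (that is, $\langle y/(k_2+x)\rangle_t\to1$) being the natural tool to control the predation term through the coupling $y\approx k_2+x$. The crux, and the hardest point, is precisely this passage from local to global attractivity: one must exclude interior equilibria and periodic orbits, and it is the coupling forcing $y$ up toward the carrying capacity $k_2+x$ when $x>0$, rather than the crude bound $y\ge k_2$ alone, that ultimately drives the prey to extinction.
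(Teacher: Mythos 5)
Your treatment of parts (a), (b), the finite-time entry statement in (c), and case (i) is essentially the paper's own proof: logistic comparison in each coordinate, Nagumo-type boundary checks, and (for (b)) the observation that the boundary inequalities are strict when $m>0$. (Be aware that in (b) the comparison equations alone do not give finite-time entry --- e.g.\ $\dot x\le x(1-x)$ only gives $x(t)\downarrow 1$ from above --- one needs the quantitative estimates the paper sets up: $y\ge k_2-\epsilon$ eventually makes $\dot x\le -c<0$ on $\{x\ge 1-\delta\}$, and $\dot x>0$ uniformly near $x=m$ gives $x\ge m+\delta$ eventually, whence $\liminf y\ge k_2+\delta$; your sketch contains these ideas.) For case (ii) your route is genuinely different from the paper's: the paper traps the trajectory in a rectangle lying below the parabola $y=U(x)$ and derives a contradiction, while you use the fluctuation lemma plus the envelope $\limsup y\le k_2+\limsup x$. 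Your argument, once the degenerate possibility $\limsup x=0$ is excluded (which your appeal to $1-ak_2/k_1>0$ does correctly in case (ii)), in fact yields the cleaner and stronger bound $\limsup x\ge \ell_+$, where $\ell_+$ is the second term in \eqref{eq:m0limsup}, so the minimum in \eqref{eq:m0limsup} becomes superfluous. That is a real improvement over the paper's estimate.

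There are, however, two genuine gaps. First, the subcase $k_1=ak_2$ with $1-k_1-a>0$ is not ``the degenerate limit of this computation.'' The collapsed quadratic $\ell\bigl(\ell-(1-k_1-a)\bigr)\ge 0$ is satisfied by $\ell=0$, and your only tool for excluding $\ell=0$ --- the linear instability $1-ak_2/k_1>0$ of $E_2$ --- returns exactly $0$ here, since $E_2$ is semi-hyperbolic. Ruling out $x(t)\to 0$ is the entire content of this subcase, and the paper devotes a dedicated second-order argument to it: in coordinates $(x,\tilde y=y-k_2)$ it shows that if $\limsup x<\delta<1-k_1-a$ then $\tilde y$, which ``chases'' $x$, is trapped in $]-\delta,\delta[$ in finite time, after which $\dot x>0$ below the parabola $\tilde y=V(x)$ pushes $x$ above $\delta$, a contradiction. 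You also cannot shortcut this by citing the topological-saddle classification of $E_2$, because in the paper that classification (\Cref{prop:trivial}) is deduced \emph{from} this very part of \Cref{theo:Ainvariant}.

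Second, the extinction cases. ``Local attractivity from the linearization'' fails when $k_1=ak_2$ (zero eigenvalue), so it only covers case (iv). More seriously, your ``key step'' --- that $D(x)=(1-x)(k_1+x)-a(k_2+x)$ has no zero in $(0,1)$ --- is provable when $k_1=ak_2$ and $1-k_1-a\le 0$ (there $D(x)=x(1-k_1-a-x)<0$), but it is \emph{not} provable from $k_1<ak_2$ alone: if in addition $1-k_1-a>0$ and $(1-k_1-a)^2>4(ak_2-k_1)$ (e.g.\ $a=k_1=0.1$, $k_2=1.01$), then $D$ has two zeros in $(0,1)$; this is exactly case (a) of \Cref{prop:m=0nbdepoints}, and the two interior equilibria obviously do not converge to $E_2$. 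So the step you flag as ``the crux'' is not merely hard, it is false in part of the parameter range of case (iv) --- which also exposes a flaw in the paper's own one-line proof of (iv) (``use exactly the same arguments''), since for $k_1<ak_2$ the parabola is shifted downward and can cross the line $\tilde y=x$ twice. Finally, even where $D<0$ does hold, your inference ``the strip $\{y\ge k_2\}$ lies in $\{\dot x<0\}$, hence $x\to 0$'' is incorrect: $\dot x<0$ requires $y>\phi(x)$, and $\phi(x)>k_2$ for all $0<x<1-k_1$ whenever $k_1<1$, which is compatible with $1-k_1-a\le 0$. What actually closes the argument is the paper's Poincar\'e--Bendixson step: no equilibrium other than $E_2$ in the compact attracting set $[0,1]\times[k_2-\epsilon,L]$, and no cycle can surround $E_2$ because it lies on the boundary; your Lyapunov/Ces\`aro alternatives remain unsubstantiated sketches of that missing step.
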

	%%%%%%%%%%
\begin{remark}
A more general sufficient condition of global attractivity of $E_2$
is provided by \Cref{prop:m=0nbdepoints} (see %Remark
\Cref{rem:+precis}).
\end{remark}

        %%%%%%%%%%
%\preuvof{Theorem \cref{theo:Ainvariant}}
\begin{proof}[Proof of \Cref{theo:Ainvariant}]
	\eqref{item:invariant}
        % {\em First step}
        % Let us first prove that $\invrg$ is invariant and that
        % \eqref{eq:limsupinf} is satisfied.
	When $m=0$, the first inequality in \eqref{eq:limsupinf} is trivial.
	In the case when $m>0$, we need to prove that
	$\liminf x(t)\geq m$, provided that
	$x(0)>0$. Actually we have a better result, since,
	if $x(0)\leq m$, then $x$
	coincides with the solution to the logistic
	equation
	$\dot{x}=x(1-x)$ as long as $x$ does not reach the value $m$, that is,
	$$x(t)=\frac{x(0)e^{t}}{1+x(0)(e^{t}-1)}.$$
	If $x(0)>0$, this function converges to $1$,
	thus there exists $t_m>0$ such that
	\begin{equation}
		\label{eq:tm}
		t\geq t_m\Rightarrow x(t)\geq m.
	\end{equation}
	% $x$ reaches
	% the value $m$ in finite time.
	Note that, when $m>0$, if $x(t)=m$, we have $\dot{x}(t)=m(1-m)>0$.
	Thus
	\begin{equation}
		\label{eq:x=m}
		\left\lgroup x(0)\geq m\right\rgroup \Rightarrow \left\lgroup x(t)\geq
		m,\ \forall t\geq 0\right\rgroup,
	\end{equation}
	which implies the first inequality in \eqref{eq:limsupinf}.
% When $m>0$, note also that, for $x=m$, we have
	% $\dot{x}=x(1-x)=m(1-m)>0$ thus, once $(x,y)$ is in $\invrg$, it
	% stays in $\invrg$.
	Now, from the first equation of \eqref{eq:simple}, we have
	\begin{equation*}%\label{eq:}
		\dot{x}\leq x(1-x),
	\end{equation*}
	which implies that, for every $t\geq 0$,
	\begin{equation}
		\label{eq:xlog+}
		x(t)\leq \frac{x(0)e^{t}}{1+x(0)(e^{t}-1)}.
	\end{equation}
	In particular, we have
	\begin{equation}
		\label{eq:xleq1}
		\limsup_{t\rightarrow+\infty}x(t)\leq 1
		\text{ and }
		\left\lgroup x(0)\leq 1\Rightarrow x(t)\leq 1,\
                       \forall t\geq 0 \right\rgroup.
	\end{equation}
	This implies that, for any $\epsilon>0$, and for $t$ large enough
	(depending on $x(0)$), we have $x(t)\leq 1+\epsilon$. We deduce that,
	for any $\epsilon>0$, and for $t$ large enough, we have
	\begin{equation}
		\label{eq:inegalites_ypoint}
		b y\CCO{1-\frac{y}{k_2} }
		\leq \dot{y}(t)\leq
		b y\CCO{1-\frac{y}{k_2+1+\epsilon-m} }
		=b y\CCO{1-\frac{y}{L+\epsilon} },
	\end{equation}
	which implies that, for $t$ large enough, say, $t\geq t_0$,
	\begin{equation}
		\label{eq:inegalites_y}
		\frac{y(0)k_2e^{bt}}{k_2+y(0)(e^{bt}-1)}
		\leq y(t) \leq
		\frac{y(t_0)(L+\epsilon)e^{b(t-t_0)}}{L+\epsilon+y(t_0)(e^{b(t-t_0)}-1)}.
	\end{equation}
	Of course, if $x(0)\leq 1$, we can drop $\epsilon$ in
	\eqref{eq:inegalites_ypoint} and \eqref{eq:inegalites_y}.
	Thus, we have
	\begin{equation}
		\label{eq:inegalites_yA}
		\left\lgroup x(0)\leq 1 \text{ and }k_2\leq y(0)\leq L\right\rgroup
		\Rightarrow
		\left\lgroup k_2\leq y(t)\leq L,\ \forall t\geq 0\right\rgroup.
	\end{equation}
	We deduce from  \eqref{eq:x=m}, \eqref{eq:xleq1}, and
        \eqref{eq:inegalites_yA} that
	$\invrg$ is invariant.
	
	As $\epsilon$ is arbitrary in \eqref{eq:inegalites_y},
        we have also, when $y(0)>0$,
	\begin{equation}\label{limites_y}
		k_2\leq \liminf_{t\rightarrow +\infty}y(t)
		\leq \limsup_{t\rightarrow +\infty}y(t)
		\leq L.
	\end{equation}
	From \eqref{eq:tm}, \eqref{eq:xleq1},
	and \eqref{limites_y},
	we deduce \eqref{eq:limsupinf}.

%%%%%%%%%%%%%%%%%%%% (b) persistence for m>0
	\bigskip
	\eqref{item:persistent}
        We have already seen that $x(t)\geq m$ for $t$
        large enough, let us now check that $x(t)\leq 1$ for $t$ large
        enough. Since $\invrg$ is invariant, we only need to prove
        this for $x(0)> 1$.
        Let $\epsilon>0$ such that $k_2-\epsilon>0$.
        Let $\delta>0$ such that $\delta+m<1$ and such that
        \begin{equation}
          \label{eq:xdelta}
          (x\geq 1-\delta)\Rightarrow
           x(1-x)< \frac{a(k_2-\epsilon)(1-m)}{1+\epsilon-m}.
        \end{equation}
        From the first inequality in
        \eqref{limites_y}, we have $y(t)\geq k_2-\epsilon$
        for $t$ large enough, say $t\geq \tk$.
        From \eqref{eq:xleq1}, we can take $\tk$ large
        enough such that, for $t\geq\tk$, we have also $x(t)\leq
        1+\epsilon$.
        Using \eqref{eq:xdelta},
        we deduce, for $t\geq \tk$ and $x(t)\geq 1-\delta$,
        \begin{align*}
          \dot{x}(t)&\leq
          x(t)\bigl(1-x(t)\bigr)
              -\frac{a(k_2-\epsilon)(1-\delta-m)}{1+\epsilon-m}\\
          &\leq -\frac{a\delta (k_2-\epsilon)}{1+\epsilon-m}.
        \end{align*}
        Thus $x$ decreases with speed less than $-\frac{a\delta (k_2-\epsilon)}{1+\epsilon-m}<0.$
	Thus $x(t)\leq 1-\delta$ for $t$ large enough.

        We can now repeat the reasoning of
       \eqref{eq:inegalites_ypoint} and \eqref{eq:inegalites_y},
        replacing $\epsilon$ by $-\delta$, which yields that
        $\limsup y(t)\leq L-\delta$. In
        particular, $y(t)<L$ for $t$ large enough.
       %%%%%%%%%%%%%%%%

	% Assume that $m>0$, and let us prove that
	% $(x(t),y(t))$ enters $[m,\infty[\times[k_2,+\infty[$ in
        % finite time.
        To prove that $y(t)>k_2$ for $t$ large enough, let us first
        sharpen the result of \eqref{eq:tm}. This is where we use that
        $m>0$.
	Let $\delta>0$, with $m+\delta<1$.
	If $\abs{x-m}<\delta$, we have
	\begin{equation*}
		\abs{x(1-x)-m(1-m)}=\abs{(x-m)\CCO{1-(x+m)}}\leq \abs{x-m}<\delta.
        \end{equation*}
 %%%%%%%%%%%%%%%%%%%%%%%%%%%%%%%%
	From \eqref{limites_y}, %{eq:inegalites_y},
          we deduce
          that, for any $\epsilon>0$, and $t$ large enough,
          depending on $\epsilon$, we have
          $$y(t)\leq L+\epsilon\text{ and }x(t)\geq m,$$
          from which we deduce
	\begin{equation*}
		\dot{x}\geq x(1-x)-\frac{a(L+\epsilon)\delta}{k_1}
		\geq \DD:=m(1-m)-\delta-\frac{a(L+\epsilon)\delta}{k_1}.
              \end{equation*}
	(we do not write $t$ here for the sake of simplicity).
	For $\delta$ small enough, we have $\DD>0$.
	Thus, if $m>0$, we can find $\delta>0$ small enough (depending on
	$m$), such that, when $x(t)$ is in the interval
        $[m,m+\delta]$, it reaches the
	value $m+\delta$ in finite time (at most $\DD\delta$),
	and then it stays in $[m+\delta,1]$.
	Using \eqref{eq:tm}, we deduce that there exists
        $t_{m+\delta}>0$ such that
	\begin{equation}
		\label{eq:tmdelta}
		t\geq t_{m+\delta}\Rightarrow x(t)\geq m+\delta.
	\end{equation}
	Using \eqref{eq:tmdelta} in \eqref{eq:simple}, we obtain, for
        $t\geq t_{m+\delta}$,
	\begin{equation*}
		%\label{eq:ydelta}
		\dot{y}\geq b y\CCO{1-\frac{y}{k_2+\delta} },
	\end{equation*}
	which yields, if $y(0)>0$,
	\begin{equation*}
		%\label{eq:y>k2}
		y(t)\geq
		\frac{y(t_{m+\delta})(k_2+\delta)e^{b(t-t_{m+\delta})}}{k_2+\delta+y(t_{m+\delta})(e^{b(t-t_{m+\delta})}-1)}.
	\end{equation*}
	This proves that
	\begin{equation*}
		%\label{eq:ydelta}
		\liminf_{t\rightarrow+\infty}y(t)\geq k_2+\delta,
	\end{equation*}
	and that $y>k_2$ for $t$ large enough.

%%%%%%%%%%%%%%%%%%%%%%%%%%%%%%% (c) cas m=0	
	\bigskip
	\eqref{item:Am0}
	Assume now that $m=0$.
Since the first part of the proof of \eqref{item:persistent}
is valid for all
$m\geq 0$, we have already proved that
$x(t)< 1$ and $y(t)<L$ for $t$ large enough.
Let $\epsilon>0$ such that $k_2-\epsilon>0$. For $y<k_2$, we have
$\dot{y}>0$, thus $[0,1]\times[k_2-\epsilon,L]$ is invariant.
Furthermore,  for any initial condition
$(x(0),y(0))\in]0,+\infty[\times]0,+\infty[$,
since $\liminf y(t)\geq k_2$,
we have $y(t)>k_2-\epsilon$ for $t$ large enough, thus
$(x(t),y(t))$ enters $[0,1]\times[k_2-\epsilon,L]$ in finite time.

        %%%%%%%%% (i)
        \medskip
        \noindent\eqref{item:Am0-i}
	Assume that $aL<k_1$, and let $\epsilon>0$ 0
        such that $a(L+\epsilon)<k_1$. Let
	$K_\epsilon=\frac{k_1-a(L+\epsilon)}{k_1}$.
	By the second inequality in \eqref{limites_y},
	we have, for $t$ large enough
	\begin{equation}
		\label{eq:m0xgeq}
		\dot{x}\geq x(1-x)-\frac{ax(L+\epsilon)}{k_1}
		=K_\epsilon x\CCO{1-\frac{x}{K_\epsilon}}.
	\end{equation}
	Thus $\liminf x(t)\geq K_\epsilon$.
	As $\epsilon$ is arbitrary, this proves \eqref{eq:m0liminf}.
	From \eqref{eq:m0liminf} and the first inequality in
	\eqref{limites_y}, we deduce that \eqref{eq:simple} is uniformly
	persistent.
	
        %%%%%%%%% (ii) uniform weak persistence
        \medskip
        \noindent\eqref{item:Am0-ii}
Assume now that $ak_2< k_1\leq aL$.
%%%
Observe first that,
if $\limsup x(t)<\xd$ for some
$\xd>0$, then, for $t$ large enough, we have $x(t)<\xd$,
thus $\dot{y}(t)<by(1-y/(k_2+l))$.
We deduce that
\begin{equation}\label{eq:k2+l}
  \limsup_{t\rightarrow\infty}x(t)<\xd
\Rightarrow
  \limsup_{t\rightarrow\infty} y(t)<k_2+\xd.
\end{equation}
Let us now rewrite the first equation of \eqref{eq:simple} as
\begin{equation*}
\dot{x}=x\left(1-x-\frac{ay}{k_1+x}\right)
    =\frac{x}{k_1+x}\Bigl(-(x-1)(x+k_1)-ay \Bigr),
\end{equation*}
that is,
\begin{equation}
  \label{eq:xm=0}
 \dot{x}=\frac{ax}{k_1+x}\Bigl(U(x)-y \Bigr)
\end{equation}
where $U(x)=(-1/a)(x-1)(x+k_1)$.
Since $ak_2<k_1$, the point $E_2$ lies below the parabola $y=U(x)$, thus
in the neighborhood of $E_2$, for $x>0$, we have $\dot{x}>0$.
%%%%

By \eqref{eq:k2+l},
if $\limsup x(t)<\xd$ for some
$\xd>0$,
then
for $t$ large enough,
the point $(x(t),y(t))$ remains in
the rectangle $\rectangle=[0,\xd]\times[0,k_2+\xd]$.
But if, furthermore, $\xd$ is small enough such that
$\rectangle$ lies entirely below the parabola $y=U(x)$,
then, when $(x(t),y(t))\in\rectangle$, we have $\dot{x}(t)>0$, which
entails that $x(t)$ is eventually greater than $\xd$, a
contradiction.
This shows that, for $\xd>0$ small enough, we have necessarily
$$\limsup_{t\rightarrow\infty}x(t)\geq \xd.$$

%%%
Let us now calculate the largest value of $\xd$ such that
$(x,y)\in\rectangle$ implies $y<U(x)$, that is, the largest $\xd$ such
that
$$\min_{x\in[0,\xd]}U(x)\geq k_2+\xd.$$
From the concavity of $U$, the minimum of $U$ on the interval $[0,\xd]$
is attained at $0$ or $\xd$.
Thus the optimal value of $\xd$ is the minimum of
$U(0)-k_2=\frac{k_1}{a}-k_2$
and the positive solution to $U(x)-k_2=x$, which
is
$$\frac{1-k_1-a+\sqrt{(1-k_1-a)^2+4(k_1-ak_2)}}{2}.$$
This proves \eqref{eq:m0limsup}.

%{\color{red}What about $k_1=ak_2$?}

        %%%%%%%%% (iii-special) /persistence/extinction
        \medskip
        \noindent\eqref{item:Am0-iii-special}
        %% cas  $k_1=ak_2$.
        Assume that $k_1=ak_2$.
        With the change of variable $\yd=y-k_2$, the system
        \eqref{eq:simple} becomes
        \begin{equation*}
\left\{\begin{aligned}
          \dot{x}% =&\,x(1-x)-\frac{ax(\yd+k_2)}{x+k_1}\\
                 %=&\,\frac{x}{x+k_1}\Bigl(-(x-1)(x+k_1)-a\yd-ak_2\Bigr)\\
                 %=&\,\frac{x}{x+k_1}\Bigl((1-k_1)x-a\yd-x^2\Bigr),\\
                 =&\,\frac{ax}{k_1+x}\Bigl(V(x)-\yd \Bigr),\\
%\intertext{and}
          \dot{\yd}=&\,b\,\frac{\yd+k_2}{x+k_2}(x-\yd),
\end{aligned}\right.
        \end{equation*}
where $V(x)=\frac{1}{a}\bigl((1-k_1)x-x^2\bigr)$.
         The second equation shows that $\dot{\yd}>0$ when $\yd<x$, and
        $\dot{\yd}<0$ when $\yd>x$.
         The first equation shows that $\dot{x}>0$ when $(x,\yd)$ is
         above the parabola $\yd=V(x)$, and $\dot{x}<0$ when $(x,\yd)$ is
         below the parabola $\yd=V(x)$.
         % Since the polynomial $V$ can
         % be factorized by $x$, we see that the convergence of $x$ is
         % very slow, whereas the convergence of $\yd-x$ to $0$ is
         % exponential.

        $\bullet$ Assume that $1-k_1-a>0$, that is,
        $V'(0)=(1-k_1)/a> 1$. Then, the parabola
        $\yd=V(x)$ is above the line $\yd=x$ for all $x$ in the
        interval $]0,\xd[$, where $\xd$ is the non-zero solution to
        $V(x)=x$, that is,
        $$\xd=1-k_1-a.$$
        Let us show that $\limsup x(t)\geq\xd$. Assume the
        contrary, that is, $\limsup x(t)<\delta$ for some
        $\delta<\xd$.
        For $t$ large enough, say, $t\geq t_\delta$,
        we have $x(t)<\delta$.
        Let us first prove that $\abs{\yd(t)}<\delta$
        for $t$ large enough.
        If $\yd(t_\delta)<\delta$, we have, for all $t\geq t_\delta$, as long as
         $\yd(t)<\delta$,
        \begin{equation*}
          \dot{\yd}(t)<b\,\frac{\xd+k_2}{k_2}(\delta-\yd(t)).
        \end{equation*}
        Since the constant function $\yd=\delta$ is a solution to
        $\dot{\yd}=b\,\frac{\xd+k_2}{k_2}(\delta-\yd)$, we deduce that
        $\yd(t)$ remains in $[-k_2,\delta]$ for all $t\geq t_\delta$.
        Furthermore, if $\yd(t)<-\delta$, for $t\geq t_\delta$, we
        have $\dot{\yd}(t)>0$, thus
       \begin{equation*}
          \dot{\yd}(t)>b\,\frac{\yd(t_\delta)+k_2}{k_2+\delta}(-\yd(t)).
        \end{equation*}
        Thus
        \begin{equation*}
          \yd(t)\geq
          y(t_\delta)
            \exp\CCO{-b\,\frac{\yd(t_\delta)+k_2}{k_2+\delta}\,(t-t_\delta)},
        \end{equation*}
        which proves that $\yd(t)$ enters $]-\delta,\delta[$ in finite
        time.
        Similarly, if $\yd(t_\delta)>\delta$, then, for all $t\geq t_\delta$
        such that $\yd(s)>\delta$ for all $s\in[t_\delta,t]$, we have
        \begin{equation*}
          \dot{\yd}(t)<b\frac{\yd(t_\delta)+k_2}{k_2}(\delta-\yd(t)),
        \end{equation*}
        thus
        \begin{equation*}
          \yd(t)<
          \delta+(\yd(t_\delta)-\delta)
              \exp\CCO{-b\frac{\yd(t_\delta)+k_2}{k_2}(t-t_\delta)},
        \end{equation*}
        which proves that $\yd(t)<\delta$ after a finite time.

        %%%%%
        We have proved that,
        for $t$ large enough, $(x(t),\yd(t))$ stays in the box
        $[0,\delta[\times]-\delta,\delta[$. Since $V(x)>x$ for all
        $x\in]0,\xd[$, we deduce that, for $t$ large enough,
        we have
        \begin{equation*}
          \dot{x}(t)>x(t)\frac{V(\delta)-\delta}{k_1+\delta},
        \end{equation*}
        which shows that $x(t)>\delta$ for $t$ large enough, a
        contradiction. This proves \eqref{eq:limsup-ak2=k1}.

        $\bullet$ Assume that $1-k_1-a\leq 0$, that is,
        $V'(0)=(1-k_1)/a\leq 1$.
        Then,
        the portion of the parabola $\yd=V(x)$ which lies
        in $]0,+\infty[\times]-k_2,+\infty[$,
        is below the line $\yd=x$.
        This means that, for any $\epsilon>0$ such that
        $k_2-\epsilon>0$, the system \eqref{eq:simple}
        has no other equilibrium point than $E_2$ in the invariant
        attracting compact set $[0,1]\times[k_2-\epsilon,L]$. Since
        there cannot be any periodic orbit around $E_2$ (because $E_2$
        is on the boundary of $[0,1]\times[k_2-\epsilon,L]$),
        this entails that $E_2$ is
        attracting for all inital conditions in
        $[0,1]\times[k_2-\epsilon,L]$,
        thus for all inital conditions in $]0,+\infty[\times]0,+\infty[$.

%%%%%%%%%%%%%%%%%%%%%%%%%%%%%%%%%%%%%%%%%%%%%%%%%%%%%
        %%%%%%%%% (iii) extinction
        \medskip
        \noindent\eqref{item:Am0-iii}
        %% cas  $k_1<ak_2$
        If $k_1<ak_2$, we can use exactly the same arguments as in the case
        when $k_1=ak_2$ with $1-k_1-a\leq 0$.
\end{proof}

	%\newpage
	%%%%%%%%%%%%%%%%%%%%%%%%%%%%%%%%%%%%%%%%%%%%%%
	\subsection{Local study of equilibrium points}
	\subsubsection{Trivial critical points}
	\label{sect:localstability}
	The right hand side of \eqref{eq:simple} has continuous partial
	derivatives in the first quadrant $\R_+\times\R_+$, except on the line
	$x=m$ if $m>0$.
	The
	Jacobian matrix of the right hand side of \eqref{eq:simple}
	(for $x\not=m$ if $m>0$), is
	\begin{equation}
		\label{eq:jacobian}
		\jcb{x,y}=
		\begin{pmatrix}
			1-2x-\frac{ayk_1}{(k_1+(x-m)_+)^2}\un{{x\geq m}}
			&\frac{-a(x-m)_+}{k_1+(x-m)_+}\\
			\frac{b y^2}{(k_2+(x-m)_+)^2}\un{{x\geq m}}  & b -\frac{2b y}{k_2+(x-m)_+}
		\end{pmatrix},
	\end{equation}
	where $\un{{x\geq m}}=1$ if $x\geq m$ and $\un{{x\geq m}}=0$ if
	$x<m$.

	%\paragraph{Counting and localizing equilibrium points}
	We start with a result on the obvious critical points of \eqref{eq:simple} which lie on the axes.
	%%%%%%%%%%%%%%%%%%%%%%%%%%%%%%%%
	\begin{proposition}\label{prop:trivial}
		The system \eqref{eq:simple} has three trivial critical points on the
		axes:
		\begin{itemize}
			\item $E_0=(0,0)$, which is an hyperbolic unstable node,
			\item $E_1=(1,0)$, which is an hyperbolic saddle point
                        whose
			 stable manifold is the $x$ axis,
                        and with an unstable manifold which is tangent to the line
                        $(b+1)(x-1)+\frac{a(1-m)}{k_1+1-m}y=0$,
			% and
			\item $E_2=(0,k_2)$, which is
                             \begin{itemize}
                             \item an hyperbolic saddle point whose
			stable manifold is the $y$ axis,
                        with an unstable manifold which is tangent to the
                        line
                        $bx+\Bigl(b+1-\frac{ak_2}{k_1}\un{{m=0}}\Bigr)(y-k_2)=0$
                        if $m>0$
                        or if $ak_2<k_1$, where $\un{{m=0}}=1$ if
                        $m=0$ and $\un{{m=0}}=0$ otherwise,

                             \item an hyperbolic stable node
                        if $m=0$ with $ak_2>k_1$,

                             \item a semi-hyperbolic point if
                               $m=0$ and $ak_2=k_1$, which is

                                \begin{itemize}
                               \item an attracting topological node
                                 if $1-k_1-a\leq 0$, %$(1-k_1)/a\leq
                                   %1$,

                               \item a topological saddle point  if
                               $1-k_1-a> 0$. %$(1-k_1)/a> 1$.
                               In this case, the $y$ axis is the
			       stable manifold, and there is a center
                               manifold which is tangent to the line $y-k_2=x$.
                                \end{itemize}
			% saddle point whose
			% stable manifold is the $y$ axis
                             \end{itemize}
                        (Compare with
                        the case \eqref{item:Am0}
                        of \Cref{theo:Ainvariant}).

                       %{\color{red} What if $ak_2=k_1$?}
		\end{itemize}
	\end{proposition}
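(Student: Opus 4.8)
The plan is to locate the fixed points lying on the axes and then read off their type from the Jacobian \eqref{eq:jacobian}, treating separately the smooth regime $m>0$ (where $E_2$ sits in the region $x<m$ on which the coupling terms vanish) and the nonsmooth regime $m=0$. Setting $y=0$ in \eqref{eq:simple} makes $\dot y=0$ and reduces the first equation to $x(1-x)=0$, giving $E_0=(0,0)$ and $E_1=(1,0)$; setting $x=0$ makes $(x-m)_+=0$, so $\dot x=0$ automatically and $by(1-y/k_2)=0$ yields $E_0$ and $E_2=(0,k_2)$. Hence these are the only critical points on the axes. The first observation to record is that, since the $x$- and $y$-axes are invariant (as noted at the start of \Cref{sec:determinist}), whenever the stable eigendirection of a saddle is a coordinate axis, that axis is automatically the full stable manifold: it is invariant, tangent to the axis, and uniqueness of the stable manifold forces coincidence.

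At $E_0$ the matrix \eqref{eq:jacobian} is $\mathrm{diag}(1,b)$, whose eigenvalues $1,b>0$ make it a hyperbolic unstable node. At $E_1$ it is upper triangular with eigenvalues $-1$ and $b$, hence a hyperbolic saddle; the eigenvector for $-1$ is horizontal, so the $x$-axis is the stable manifold, while the eigenvector for $b$ satisfies $(b+1)v_1+\frac{a(1-m)}{k_1+1-m}v_2=0$, which, carried to a line through $E_1$, is exactly the stated tangent of the unstable manifold. The same bookkeeping at $E_2$ gives, when $m>0$, the matrix $\mathrm{diag}(1,-b)$ (because $x=0<m$ lies where the coupling vanishes), a hyperbolic saddle with the $y$-axis as stable manifold; when $m=0$ it is the lower-triangular $\left(\begin{smallmatrix}1-ak_2/k_1 & 0\\ b & -b\end{smallmatrix}\right)$ with eigenvalues $1-ak_2/k_1$ and $-b$. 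I would then split three ways: $ak_2<k_1$ gives eigenvalues of opposite sign (saddle); $ak_2>k_1$ gives two negative eigenvalues (stable node); $ak_2=k_1$ leaves a zero eigenvalue and is the delicate case. In every saddle subcase the unstable manifold is tangent to the eigenvector of the positive eigenvalue, from which the tangent line recorded in the statement is read off directly.

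For the semi-hyperbolic point $m=0,\ ak_2=k_1$, I would pass to the translated variable $\yd=y-k_2$ already used in the proof of \Cref{theo:Ainvariant}, case \eqref{item:Am0-iii-special}, which recasts the system as $\dot x=\frac{ax}{k_1+x}\bigl(V(x)-\yd\bigr)$ and $\dot{\yd}=b\frac{\yd+k_2}{x+k_2}(x-\yd)$ with $V(x)=\frac1a\bigl((1-k_1)x-x^2\bigr)$. The linear part is $\left(\begin{smallmatrix}0&0\\ b&-b\end{smallmatrix}\right)$, whose kernel is spanned by $(1,1)$, so the center manifold is tangent to $\yd=x$, i.e. $y-k_2=x$, while the $y$-axis remains the (strong) stable manifold of the eigenvalue $-b$. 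Writing the center manifold as $\yd=h(x)=x+O(x^2)$ and substituting, the reduced equation is $\dot x=\frac{1-k_1-a}{k_1}\,x^2+O(x^3)$: for $x>0$ small its sign is that of $1-k_1-a$, so if $1-k_1-a>0$ the flow on the center manifold drives $x$ away from $0$ into the quadrant (topological saddle), whereas if $1-k_1-a<0$ it pulls $x$ back to $0$ (attracting topological node).

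The genuinely awkward point — and the main obstacle — is the borderline $1-k_1-a=0$, where the quadratic term above vanishes and one would in principle have to compute the center manifold to second order and determine the sign of the resulting cubic coefficient. I would avoid that computation by invoking \Cref{theo:Ainvariant}, case \eqref{item:Am0-iii-special}: in the regime $k_1=ak_2$ with $1-k_1-a\le 0$ that result already shows $E_2$ is globally attracting on $]0,+\infty[\times]0,+\infty[$, which forces $E_2$ to be an attracting topological node and disposes of the borderline uniformly; conversely, for $1-k_1-a>0$ the same result gives $\limsup_{t}x(t)\ge 1-k_1-a>0$, so no interior trajectory converges to $E_2$, confirming the topological-saddle classification and the existence of a center-manifold trajectory escaping along $y-k_2=x$. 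This reliance on \Cref{theo:Ainvariant} is exactly what turns the hardest, degenerate subcase into a short deduction rather than a delicate normal-form calculation.
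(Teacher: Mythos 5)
Your overall route coincides with the paper's: locate the three axis equilibria, classify the hyperbolic ones from the Jacobian \eqref{eq:jacobian} together with invariance of the axes, and treat the semi-hyperbolic point ($m=0$, $ak_2=k_1$) by a center-manifold argument combined with \Cref{theo:Ainvariant}. On the semi-hyperbolic case your treatment is slightly different and, as far as it goes, cleaner: you compute the reduced dynamics $\dot{x}=\frac{1-k_1-a}{k_1}x^2+O(x^3)$ explicitly (this coefficient is correct), decide saddle versus attracting node from its sign, and call on \Cref{theo:Ainvariant} only for the borderline $1-k_1-a=0$; the paper instead quotes the node/saddle dichotomy for semi-hyperbolic points from Dumortier--Llibre--Art\'es and lets \Cref{theo:Ainvariant} settle the whole classification, using its normal-form computation only to obtain the tangency of the center manifold to $y-k_2=x$. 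Both are legitimate, and your version is more self-contained.

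There is, however, a genuine failure in the step ``in every saddle subcase the unstable manifold is tangent to the eigenvector of the positive eigenvalue, from which the tangent line recorded in the statement is read off directly.'' Your own (correct) computation contradicts this for $E_2$. For $m>0$ you rightly observe that the coupling vanishes near $E_2$, so
$\jcb{0,k_2}=\begin{pmatrix}1&0\\0&-b\end{pmatrix}$;
the unstable eigendirection is then $(1,0)$, and since the system is fully decoupled for $x<m$, the local unstable manifold is exactly the segment $\bigl\{y=k_2,\ 0\le x<m\bigr\}$, i.e.\ tangent to $y-k_2=0$ --- not to the stated line $bx+(b+1)(y-k_2)=0$, which has slope $-b/(b+1)\neq 0$. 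For $m=0$ with $ak_2<k_1$, the unstable eigenvector of
$\begin{pmatrix}1-ak_2/k_1&0\\ b&-b\end{pmatrix}$
is $\bigl(b+1-\frac{ak_2}{k_1},\,b\bigr)$, which gives the line $bx-\bigl(b+1-\frac{ak_2}{k_1}\bigr)(y-k_2)=0$, the opposite sign to the statement. So the tangent line for $E_2$ cannot be ``read off directly'': for $m=0$ it differs from the stated one by a sign, and for $m>0$ it is simply a different line. (The paper's proof has the complementary defect: it displays $\jcb{0,k_2}$ with lower-left entry $b$ for every $m$, contradicting its own formula \eqref{eq:jacobian} when $m>0$; your diagonal matrix is the right one.) In effect your computation uncovers an error in the statement and in the paper's proof, but by asserting agreement without performing the check, your write-up neither proves the statement as written (impossible) nor proves a corrected version; that verification step is the gap.
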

	%%%%%%%%%%
	\begin{proof} The nature of $E_0$, $E_1$, and $E_2$, is obvious since
	\begin{equation*}
		\jcb{0,0}=
		\begin{pmatrix}
			1 & 0\\
			0  & b
		\end{pmatrix},
		\
		\jcb{1,0}=
		\begin{pmatrix}
			-1 & \frac{-a(1-m)}{k_1+1-m}\\
			0  & b
		\end{pmatrix},
		\
		\jcb{0,k_2}=
		\begin{pmatrix}
			1-\frac{ak_2}{k_1}\un{{m=0}} & 0\\
			b  & -b
		\end{pmatrix}.
	\end{equation*}
        The results on stable and unstable manifolds of hyperbolic
        saddles are straightforward.
        In the case when $E_2$ is semi-hyperbolic,
         since it is either
        a topological node or a topological saddle
        (see \cite[Theorem 2.19]{dumortier-llibre-artes}),
        the nature of $E_2$ follows from
        Part \eqref{item:Am0-iii-special} of %Theorem
        \Cref{theo:Ainvariant}.
        In the topological saddle case, that is, when
        $m=0$ with $ak_2=k_1$ and $1-k_1-a> 0$,
        the eigen values of $\jcb{0,k_2}$ are $-b$ and $1$, with
        corresponding eigenvectors $(0,1)$ and $(1,1)$.
        Clearly, the $y$ axis is the stable manifold.
        The change of variables
        $$\XX=x,\quad \YY=(y-k_2)-x$$
        yields the normal form
              \begin{align*}
                 \dot{\XX}=&\, \dot{x}=\frac{\XX}{\XX+k_1}\Bigl((1-k_1)\XX-\XX^2-a(\XX+\YY)\Bigr)\\
                  =&\,
                    \frac{\XX}{\XX+k_1}\Bigl((1-k_1-a)\XX-\XX^2-a\YY\Bigr),\\
         \dot{\YY}=&\,
                    \dot{x}-\dot{y}=\dot{x}-b\frac{\XX+\YY+k_2}{\XX+k_2}(-\YY)
                  %=&\,\dot{\XX}
                  %  -b\frac{\XX+\YY+k_2}{\XX+k_2} \YY\\
                  = \dot{\XX}-b\CCO{1+\frac{\YY}{\XX+k_2}}\YY\\
                  =&\,-b\YY+\dot{\XX}-b\frac{\YY^2}{\XX+k_2}.
       \end{align*}
       We can thus write
       	\begin{equation}
\begin{aligned}
  \dot{\XX}=&\,A(\XX,\YY),\\
  \dot{\YY}=&\,-b\YY+B(\XX,\YY),
\end{aligned}
\end{equation}
where $A$ and $B$ are analytic and their jacobian matrix at $(0,0)$ is
$0$.
In the neighborhood of $(0,0)$, the equation
$0=-\YY b +B(\XX,\YY)$ has the unique solution $\YY=\Cf(\XX)$, where
$$\Cf(X)=\frac{k_2a}{bk_2}X+O(X),$$
and $\Cg(\XX)=A(\XX,\Cf(\XX))$ has the form
\begin{equation*}
\Cg(\XX)=\frac{X^2}{k_2}\CCO{1+k_1-a-\frac{a^2k_2}{bk_1}}+O(X).
\end{equation*}
From \cite[Theorem 2.19]{dumortier-llibre-artes}, we deduce
       that there exists an unstable center manifold which is
       infinitely tangent to the line $\YY=0$.
	\end{proof}
	\subsubsection{Counting and localizing equilibrium points}
	
	Let us now look for  critical points outside the axes, i.e.,
	critical points $\EE=(\xx,\yy)$ with $\xx> 0$ and $\yy>0$.
	From the results of \Cref{subsec:persistence}, such points
	are necessarily in $\invrg$, in particular they satisfy $\xx\geq m$.
	We have, obviously:	
	%%%%%%%%%%%%%%%%%%%%
	\begin{lemma}\label{lem:critiquesdansA}
		The set of equilibrium points of \eqref{eq:simple}
		which lie in the open quadrant $]0,+\infty[\times ]0,+\infty[ $
		consists
		of the intersection points of the curves
		\begin{align}
			\xx(1-\xx)\CCO{k_1+\xx-m}&=a\CCO{{k_2+\xx-m}}(\xx-m) \label{eq:xstar2},\\
			{k_2+\xx-m}&=\yy.\label{eq:ystar2}
		\end{align}
		Furthermore, these points lie in $\invrg$.
	\end{lemma}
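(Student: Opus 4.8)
The plan is to reduce the equilibrium conditions $v_1=v_2=0$ to the two displayed curves, the only genuine input being that an interior equilibrium automatically satisfies $\xx\geq m$, which lets me erase the positive part $(\,\cdot\,)_+$ and turn the conditions into polynomial identities. So the first step is to justify $\xx\geq m$: an equilibrium $(\xx,\yy)$ with $\xx,\yy>0$ is a constant solution, so applying the bounds \eqref{eq:limsupinf} of \Cref{theo:Ainvariant} to it (here $\liminf$ and $\limsup$ both equal the constant value) gives $m\leq\xx\leq 1$ and $k_2\leq\yy\leq L$. In particular $(\xx-m)_+=\xx-m$, and both denominators $k_1+\xx-m$ and $k_2+\xx-m$ are positive.

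Next I would extract the two curves. Since $\yy>0$, the equation $v_2=0$ reads $1-\yy/(k_2+\xx-m)=0$, i.e.\ $\yy=k_2+\xx-m$, which is exactly \eqref{eq:ystar2}. Then $v_1=0$ reads $\xx(1-\xx)=a\yy(\xx-m)/(k_1+\xx-m)$; multiplying by the positive factor $k_1+\xx-m$ and substituting $\yy=k_2+\xx-m$ yields \eqref{eq:xstar2}. For the reverse inclusion I would check that any point with $\xx>0$, $\yy>0$, $\xx\geq m$ satisfying \eqref{eq:xstar2}--\eqref{eq:ystar2} annihilates both $v_1$ and $v_2$, and that no interior equilibrium can have $0<\xx<m$, since in that region the dynamics degenerates to $\dot{x}=x(1-x)$, forcing $\xx=1>m$, a contradiction. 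This establishes the claimed equality of the equilibrium set with the intersection of the two curves.

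Finally, to confirm membership in $\invrg$ (which demands the strict bound $\yy<L$ rather than just $\yy\leq L$), I would argue by contradiction: if $\yy=L=1+k_2-m$, then \eqref{eq:ystar2} forces $\xx=1$, so the left-hand side of \eqref{eq:xstar2} vanishes while the right-hand side equals $a(k_2+1-m)(1-m)$, which is positive because $m<1$. Hence $\xx<1$, whence $\yy=k_2+\xx-m<L$, and $(\xx,\yy)\in\invrg$.

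There is no real obstacle here, which is why the lemma is labelled as essentially obvious: the entire content is the persistence bound of \Cref{theo:Ainvariant} that lets me drop the $(\,\cdot\,)_+$, after which \eqref{eq:xstar2} and \eqref{eq:ystar2} are just a rearrangement of $v_1=0$ and $v_2=0$. The one spot that requires a moment's care is the strict inequality $\yy<L$ needed for the final membership claim.
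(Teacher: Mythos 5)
Your proposal is correct and follows essentially the same route as the paper, which simply invokes the persistence results of \Cref{subsec:persistence} (i.e.\ \Cref{theo:Ainvariant}\eqref{item:invariant} applied to the constant solution) to conclude $\xx\geq m$, after which dropping the positive part and rearranging $v_1=v_2=0$ gives \eqref{eq:xstar2}--\eqref{eq:ystar2}. Your treatment is in fact slightly more careful than the paper's ``obviously'': you verify the converse inclusion for points with $\xx\geq m$, rule out equilibria with $0<\xx<m$ directly, and supply the argument (via $\xx=1$ making the two sides of \eqref{eq:xstar2} have different signs) for the strict inequality $\yy<L$ required for membership in $\invrg$, all of which the paper leaves implicit.
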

	%%%%%%%%%%%%%%%%%%%	
	We shall see that, when $m>0$,
	the system \eqref{eq:simple} has always at least
	one equilibrium point
	in $]0,+\infty[\times ]0,+\infty[ $,
	whereas, for $m=0$, some condition is necessary for the existence of
	such a point.

	\bigskip
	%%%%%%%%%%%%%%%%%%%%%%%%%%%%%%%%% CAS m>0 %%%%%%%%%%%%%%%%%%%%%%%%%%%%%%
	$\bullet$ \underline{When $m>0$}, the solutions to \eqref{eq:xstar2}
	lie at the abscissa of the intersection of the parabola
	$z=\Pf(\xx):=a\CCO{{k_2+\xx-m}}(\xx-m)$ and of the third degree curve
	$z=\Pg(\xx):=\xx(1-\xx)\CCO{k_1+\xx-m}$. %see Figure \cref{fig:m>0}.
	We have $\Pf(m)-\Pg(m)=-\Pg(m)=-k_1m(1-m)<0$ and,
	for $x>1$, we have $\Pf(x)<0$ and $\Pg(x)>0$, thus $\Pf(x)-\Pg(x)>0$.
	This implies that
	the curves of $\Pf$ and $\Pg$ have at least one
	intersection whose abscissa is
	greater than $m$, and that the abscissa of any such intersection
	lies necessarily in the interval $]m,1[$.
	%%%%%%%%%%
	The change of variable $X=x-m$ leads to
	\begin{equation}
		\label{eq:R(x)}
		R(X):=\Pf(x)-\Pg(x)
		=X^3+\alpha_2 X^2+\alpha_1 X+\alpha_0,
	\end{equation}
	with
	\begin{equation}
		\label{eq:Rcoeffs}
		\alpha_2=a+k_1-1+2m,\
		\alpha_1%=ak_2+k_1(2m-1)-m(1-m),\
		=m^2+m(2k_1-1)+ak_2-k_1,\
		\alpha_0=-k_1m(1-m).
	\end{equation}
	By Routh's scheme (see \cite{gantmacher}),
	the number $\nroot$ of roots of
	\eqref{eq:R(x)} with positive real part,
	counted with multiplicities,
	is equal to the number of changes of sign of the sequence
	\begin{equation}
		\label{eq:Routhseq}
		V:=\CCO{1,\alpha_2,\alpha_1-\frac{\alpha_0}{\alpha_2},\alpha_0},
	\end{equation}
	provided that all terms of $V$ are non zero.
	Thus $\nroot=3$ when
	\begin{equation}
		\label{eq:n=3}
		\alpha_2<0 \text{ and } \alpha_1\alpha_2<\alpha_0,
	\end{equation}
	and, in all other cases,  $\nroot=1$.
	When $\nroot=1$, we know that the number $\nrootp$ of real positive roots of
	$R$ %\eqref{eq:R(x)}
	is exactly 1.
	When $\nroot=3$, we have either $\nrootp=1$ if $R$ has two complex
	conjugate roots, or $\nrootp=3$. So, we need to examine
	when all roots of $R$ %\eqref{eq:R(x)}
	are real numbers.
	A very simple method to do that for cubic polynomials is described by
	Tong \cite{tong04}:
	%note={cited in McNamee and Pan, Numerical Methods for Roots
        %of Polynomials, Part II}
	a necessary and sufficient condition for $R$ to have three distinct
	real roots is that $R$ has a local maximum and a local minimum, and
	that these extrema have opposite signs. The abscissa of these extrema
	are the roots of the
	derivative $R'(X)=3X^2+2\alpha_2X+\alpha_1$, thus $R$ has three
	distinct real roots if, and only if, the following conditions are
	simultaneously satisfied:
	\begin{enumerate}[(i)]
		\item\label{cond:discrpos} The discriminant $\discrimRp$
		of $R'$ is positive,
		%i.e., $\alpha_2^2-3\alpha_1>0$,
		
		\item\label{cond:RR} $R(\xpmin)R(\xpmax)<0$, where
                  $\xpmin$ and $\xpmax$ are the
		distinct roots of $R'$.
	\end{enumerate}
	If  $R(\xpmin)R(\xpmax)=0$ with $\Delta_{R'}>0$,
	the polynomial $R$ still has three real roots,
	two of which coincide and differ from the third one.
	If $R(\xpmin)R(\xpmax)=0$ with $\Delta_{R'}=0$,
	it has
	a real root with multiplicity 3, which is $\xpmin=\xpmax$,
	and if $\Delta_{R'}=0$ with  $R(\xpmin)R(\xpmax)\not=0$, it has only
	one real root.
	Fortunately, all radicals disappear in the calculation of $R(\xpmin)R(\xpmax)$:
	\begin{equation*}
		R(\xpmin)R(\xpmax)
		=\frac{1}{27}\CCO{\RR}.
		%MAPLE : (4/27)*a2^3*a0-(1/27)*a2^2*a1^2+a0^2-(2/3)*a2*a1*a0+(4/27)*a1^3
	\end{equation*}
	In particular, Conditions \eqref{cond:discrpos} and \eqref{cond:RR} can be summarized as
	\begin{equation}
		\label{eq:tong2004}
		%\begin{gathered}
		\alpha_2^2-3\alpha_1>0\text{ and }
		\RR<0.
		%\end{gathered}
	\end{equation}

	Let us now examine what happens when one term of the sequence $V$ in
	\eqref{eq:Routhseq} is zero.
	We skip temporarily the case $\alpha_0=0$, which is equivalent to $m=0$.
	\begin{itemize}
		\item If $\alpha_2\alpha_1=\alpha_0$, we have
		$$R(X)=(X+\alpha_2)(X^2+\alpha_1),$$
		and $\alpha_2$ and $\alpha_1$ have opposite signs, because
		$\alpha_0<0$.
		Thus, in that case,  $R$ has a unique positive root, which is
		$\sqrt{-\alpha_1}$ if $\alpha_2>0$, and $-\alpha_2$ if $\alpha_2<0$.

		\item
		If $\alpha_2=0$, the derivative of $R$ becomes
		$R'(X)=3X^2+\alpha_1$.
		If $\alpha_1>0$, $R$ is increasing on $]-\infty,\infty[$, thus it has
		only one (necessarily positive) real root.
		If $\alpha_1=0$, we have $R(X)=X^3+\alpha_0$, thus $R$
                has only one real
		root, which is $\sqrt[\leftroot{1}\uproot{1}3]{-\alpha_0}>0$.
		If $\alpha_1<0$, $R$ is decreasing in the interval
		$[-\sqrt{-\alpha_1},\sqrt{-\alpha_1}]$,
		and increasing in $[\sqrt{-\alpha_1},+\infty[$.  Since $R(0)<0$, $R$
		has only one positive root.
		Thus, in that case too, $R$ has a unique positive root.
	\end{itemize}

From the preceding discussion, we deduce the following theorem:	
	%%%%%%%%%%%%%%%%%%%%%%% prop m>0 %%%%%%%
	\begin{theorem}\label{theo:nbdepoints}
		Assume that $m>0$.
		With the notations of \eqref{eq:Rcoeffs}, the number
                $\nrootp$ of distinct equilibrium
		points of the system \eqref{eq:simple} which lie in the open quadrant
		$]0,+\infty[\times ]0,+\infty[$ is
		\begin{enumerate}[(a)]

			\item \label{cond:n=3}
			$\nrootp=3$ if $\bigl\lgroup \alpha_2<0$,
			$\alpha_1\alpha_2<\alpha_0$,
			$\alpha_2^2-3\alpha_1> 0$, and
			$\RR < 0\bigr\rgroup,$
			
			\item \label{cond:n=2}
                        $\nrootp=2$ if $\bigl\lgroup
                        \alpha_2<0$, $\alpha_1\alpha_2<\alpha_0$, $\alpha_2^2-3\alpha_1> 0$ and
                        $\RR
			= 0\bigr\rgroup,$

			\item \label{cond:n=1}
			$\nrootp=1$ in all other cases, i.e., if $\bigl\lgroup
                        \alpha_2\geq0$ or $\alpha_1\alpha_2\geq\alpha_0$ or $\alpha_2^2-3\alpha_1\leq 0$ or
                        $\RR
			> 0\bigr\rgroup.$
			
		\end{enumerate}
	
	\end{theorem}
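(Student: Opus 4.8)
The plan is to translate the counting of interior equilibria into a counting of the distinct positive real roots of the cubic $R$ of \eqref{eq:R(x)}, and then simply to read off the three cases from the Routh and Tong analyses carried out just above the statement.

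First I would record the dictionary between equilibria and roots. By \Cref{lem:critiquesdansA}, an interior equilibrium $\EE=(\xx,\yy)$ is determined by its abscissa $\xx$, which solves \eqref{eq:xstar2}, the ordinate being $\yy=k_2+\xx-m$ through \eqref{eq:ystar2}; hence distinct equilibria correspond bijectively to distinct values of $\xx$, that is, to distinct roots $X=\xx-m$ of $R$, \emph{regardless of their multiplicity}. It remains to check that the relevant roots are precisely the positive ones. Since $\alpha_0=-k_1m(1-m)<0$ we have $R(0)<0$, while the sign computation preceding \eqref{eq:R(x)} gives $\Pf(x)-\Pg(x)>0$ for $x\geq 1$, i.e.\ $R(X)>0$ for $X\geq 1-m$; consequently every real root of $R$ in $]0,+\infty[$ lies in $]0,1-m[$, which is exactly the admissible range $\xx\in\,]m,1[$, and at least one such root exists. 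Thus $\nrootp$, the number of distinct interior equilibria, equals the number of distinct positive real roots of $R$, and $\nrootp\geq 1$ in all cases.

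Next I would assemble the two independent pieces of information. Routh's scheme applied to the sequence $V$ of \eqref{eq:Routhseq} counts (with multiplicity) the number $\nroot$ of roots with positive real part, giving $\nroot=3$ exactly under \eqref{eq:n=3} and $\nroot=1$ otherwise; this is legitimate under \eqref{eq:n=3} because every term of $V$ is then nonzero ($\alpha_0<0$, $\alpha_2<0$, and dividing $\alpha_1\alpha_2<\alpha_0$ by $\alpha_2<0$ yields $\alpha_1-\alpha_0/\alpha_2>0$). Tong's criterion \eqref{eq:tong2004} tells us, independently, when $R$ has three distinct real roots. The key observation linking the two is that \emph{when all three roots of $R$ are real, having positive real part is the same as being positive}; hence whenever $\nroot=3$ and the roots are moreover real, they are three positive numbers. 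Conversely, $\nroot=1$ forces a single root of positive real part, which must be real (complex roots come in conjugate pairs of equal real part), so it is the unique positive real root furnished above, giving $\nrootp=1$. With these facts the three cases are immediate. For $\nrootp=3$: combining \eqref{eq:n=3} (all roots of positive real part) with \eqref{eq:tong2004} (three distinct real roots) produces three distinct positive roots. For $\nrootp=2$: keeping \eqref{eq:n=3} and $\alpha_2^2-3\alpha_1>0$ but imposing $\RR=0$ gives, by the discussion following \eqref{eq:tong2004}, a real double root together with a distinct simple real root, all of positive real part, hence two distinct positive values. For $\nrootp=1$: if $\nroot=1$ we are done as above; if $\nroot=3$ but Tong fails ($\alpha_2^2-3\alpha_1\leq 0$, forcing $R$ strictly monotone, or $\RR>0$, forcing extrema of the same sign), then $R$ has exactly one real root, necessarily positive since $R(0)<0$; and the degenerate configurations in which a term of $V$ vanishes ($\alpha_2\geq 0$ covering $\alpha_2=0$, and $\alpha_1\alpha_2\geq\alpha_0$ covering $\alpha_1\alpha_2=\alpha_0$) were already shown above to yield a single positive root. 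All of these fall under the inequalities listed for $\nrootp=1$.

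The only real work is bookkeeping: one must track multiplicities and the degenerate Routh configurations, and verify that the four displayed inequalities of the last case are exactly the negation of the union of the hypotheses of the first two, so that the three cases are mutually exclusive and exhaustive. The \textbf{main obstacle} is thus conceptual rather than computational — correctly reconciling the count $\nroot$ of roots of positive real part (from Routh) with the count of \emph{real} roots (from Tong) so as to extract the count of distinct positive real roots — and no estimate is needed beyond those two criteria and the elementary sign analysis of $R$ on $[0,+\infty[$.
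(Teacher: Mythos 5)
Your proposal is correct and takes essentially the same route as the paper: the paper's proof of \Cref{theo:nbdepoints} is precisely the preceding discussion (the dictionary of \Cref{lem:critiquesdansA} between interior equilibria and positive roots of $R$, Routh's count $\nroot$ of roots with positive real part via \eqref{eq:Routhseq}, Tong's criterion \eqref{eq:tong2004} for three distinct real roots, and the degenerate cases $\alpha_2=0$ and $\alpha_1\alpha_2=\alpha_0$), which you reassemble into the three cases. The extra verifications you make explicit --- nonvanishing of the Routh sequence under \eqref{eq:n=3}, the fact that for real roots ``positive real part'' means ``positive'', and the exhaustiveness and exclusivity of the three cases --- are exactly the bookkeeping the paper leaves implicit in the sentence ``From the preceding discussion, we deduce the following theorem.''
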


	%%%%%%%%%%%%%%%%%%%%%%%%%%%%%%%
	\begin{remark}\label{rem:nonempty}
		Numerical computations show that
		all cases considered in \Cref{theo:nbdepoints} are nonempty.
		See %Figure
                \Cref{fig:3points-cycle} for an example
		of positive numbers $(a,k_1,k_2,m)$ satisfying
		\eqref{eq:n=3} and \eqref{eq:tong2004}.
	\end{remark}
	%%%%%%%%%%%
	
	\bigskip
	%%%%%%%%%%%%%%%%%%%%%%%%%%%%%%% cas m=0 %%%%%%%%%%%%%%%%%%%%%%%%%%%%
	$\bullet$ \underline{When $m=0$}, the system \eqref{eq:simple} is exactly the
	system studied by
	M.A.~Aziz-Alaoui and M.~Daher-Okiye
	\cite{Daher-Aziz03,Daher-Aziz03bologn}.
	As $\xx$ is assumed to be positive, \eqref{eq:xstar2} is
	equivalent to the quadratic equation
	\begin{equation}\label{eq:xstar0}
		(1-\xx)\CCO{k_1+\xx}=a\CCO{{k_2+\xx}},
	\end{equation}
        which can be written
	\begin{equation*}
		\xx^2+\alpha_2\xx+\alpha_1=0,
	\end{equation*}
	where $\alpha_2=a+k_1-1$ and $\alpha_1=ak_2-k_1$ as in \eqref{eq:Rcoeffs}.
	The associated discriminant is
	\begin{equation}
		\label{eq:discrim2}
		\discrimQ=\alpha_2^2-4\alpha_1=(a+k_1-1)^2-4ak_2+4k_1,
	\end{equation}
	thus a sufficient and necessary condition for the existence of
	solutions to \eqref{eq:xstar0} in $\R$ is $\discrimQ\geq 0$, i.e., $k_2$
	must not be too large:
	\begin{equation}\label{eq:discriminant}
		4ak_2\leq (1-k_1-a)^2+4k_1.
	\end{equation}
	Since the sum of the solutions to \eqref{eq:xstar0}
	is $-\alpha_2$ and their product is
	$\alpha_1$, we deduce the following result:

%%%%%%%%%%%%%%%%%%%%%%%%%%%%%%%%%
	\begin{theorem}\label{prop:m=0nbdepoints}
		Assume that $m=0$.
		With the notations of \eqref{eq:Rcoeffs}, the number $\nrootp$ of
		distinct equilibrium points of the system \eqref{eq:simple}
		which lie in the open quadrant
		$]0,+\infty[\times ]0,+\infty[$ is
		\begin{enumerate}[(a)]
			\item\label{item:m0-2sol}
			$\nrootp=2$
			if
			$\discrimQ> 0$ and $\alpha_1>0$ and $\alpha_2<0$, i.e., if
			\begin{equation}
				\label{eq:2positivesol}
				4ak_2< (1-k_1-a)^2+4k_1
				\text{ and }
				ak_2>k_1\text{ and } 1-k_1-a> 0.
			\end{equation}
			% \eqref{eq:discriminant} is satisfied with
			% $ak_2>k_1$ and $1-k_1-a> 0$.

			\item\label{item:m0-1sol}
			$\nrootp=1$ if
			$\bigl\lgroup \discrimQ > 0$ and $\bigr\lgroup\alpha_1<0$ or $(\alpha_1=0$ and
			$\alpha_2<0)\bigr\rgroup\bigr\rgroup$, or $\bigl\lgroup \discrimQ=0 $ and $\alpha_2<0 \bigr\rgroup$
			i.e., if
			\begin{equation*}
			\bigl\lgroup\bigl\lgroup
			4ak_2< (1-k_1-a)^2+4k_1
			\bigr\rgroup
			\text{ and }
			\bigl\lgroup
			ak_2< k_1\text{ or }
			\bigl(ak_2=k_1\text{ and } 1-k_1-a> 0\bigr)
			\bigr\rgroup\bigr\rgroup, \\
			\end{equation*}
			or $\bigl\lgroup 4ak_2= (1-k_1-a)^2+4k_1$ and $1-k_1-a> 0 \bigr\rgroup$,\\
			% \eqref{eq:discriminant} is satisfied, with
			% $ak_2<k_1$
			%       or ($ak_2=k_1$ and $1-k_1-a>0$),

\item\label{item:m0-nosol} %no positive solution
			$\nrootp=0$
			if $\discrimQ<0$, or if
			$\bigl\lgroup
			\alpha_1\geq0\text{ and }\alpha_2\geq 0
			\bigr\rgroup,$ i.e., if
			$$
			\bigl\lgroup
			4ak_2> (1-k_1-a)^2+4k_1
			\bigr\rgroup
			\text{ or }
			\bigl\lgroup
			ak_2\geq k_1\text{ and } 1-k_1-a\leq 0
			\bigr\rgroup.
			$$
			% \eqref{eq:discriminant} is not satisfied, or if
			% or if ($ak_2\geq k_1$ and $1-k_1-a\leq 0$),

		\end{enumerate}
	\end{theorem}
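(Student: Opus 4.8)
The plan is a direct sign analysis of the roots of the quadratic $\xx^2+\alpha_2\xx+\alpha_1=0$, which is equivalent to \eqref{eq:xstar0} for $\xx>0$. The reduction has already been set up just before the statement: by \Cref{lem:critiquesdansA} together with \eqref{eq:ystar2} (which for $m=0$ reads $\yy=k_2+\xx$), every root $\xx>0$ pairs with exactly one $\yy=k_2+\xx>0$, so counting equilibria of \eqref{eq:simple} in the open quadrant amounts to counting \emph{strictly positive} real roots of that quadratic. Real roots exist if and only if $\discrimQ\geq 0$ (with $\discrimQ$ as in \eqref{eq:discrim2}), and by Vieta's formulas the sum of the roots is $-\alpha_2$ while their product is $\alpha_1$. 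These three facts are the only ingredients.

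First I would dispose of $\discrimQ<0$: there is no real root, hence $\nrootp=0$, which is part of \eqref{item:m0-nosol}. For the remaining cases I would split on the sign of the product $\alpha_1$. When $\alpha_1<0$ the roots are automatically real, since then $\discrimQ=\alpha_2^2-4\alpha_1>0$, and they have opposite signs, so exactly one is positive and $\nrootp=1$; this is the first alternative of \eqref{item:m0-1sol}. When $\alpha_1>0$ the two roots, when real, share the sign of $-\alpha_2$: if $\alpha_2<0$ and $\discrimQ>0$ they are two distinct positive roots, giving $\nrootp=2$ (case \eqref{item:m0-2sol}); if instead $\alpha_2\geq 0$ they are both non-positive, giving $\nrootp=0$ (part of \eqref{item:m0-nosol}).

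Then I would handle the two boundary situations. When $\alpha_1=0$ one root equals $0$, which is excluded from the \emph{open} quadrant, and the other root is $-\alpha_2$; this contributes $\nrootp=1$ if $\alpha_2<0$ (the second alternative of \eqref{item:m0-1sol}) and $\nrootp=0$ if $\alpha_2\geq 0$ (part of \eqref{item:m0-nosol}). When $\discrimQ=0$ there is a double root at $-\alpha_2/2$, counted once as a single distinct point: it lies in the open quadrant precisely when $\alpha_2<0$, giving $\nrootp=1$ (the last alternative of \eqref{item:m0-1sol}), and otherwise $\nrootp=0$. Collecting these subcases and translating the conditions on $\alpha_1,\alpha_2,\discrimQ$ into the explicit inequalities in $a,k_1,k_2$ through $\alpha_2=a+k_1-1$, $\alpha_1=ak_2-k_1$ and \eqref{eq:discrim2} produces exactly the three cases of the statement.

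The only point requiring care is the bookkeeping: one must check that the listed conditions are mutually exclusive and jointly exhaustive over all sign patterns of $(\alpha_1,\alpha_2)$ combined with the three possibilities $\discrimQ<0$, $\discrimQ=0$, $\discrimQ>0$, and in particular that the two boundary subcases are slotted correctly — $\alpha_1=0$ because the root $\xx=0$ must be discarded, and $\discrimQ=0$ because the double root must be counted only once toward the number of \emph{distinct} points. There is no genuine analytic obstacle; the entire content is the translation between the sign data of a quadratic and the final parameter inequalities.
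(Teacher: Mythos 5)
Your proposal is correct and takes essentially the same route as the paper: reduce, for $m=0$, to the quadratic $\xx^2+\alpha_2\xx+\alpha_1=0$, and then count its strictly positive roots using the discriminant $\discrimQ$ together with Vieta's formulas (sum of roots $-\alpha_2$, product $\alpha_1$). The paper leaves the resulting sign bookkeeping implicit (``we deduce the following result''), while you carry it out explicitly, including the boundary cases $\alpha_1=0$ and $\discrimQ=0$; the mathematical content is identical.
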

	%%%%%%%%%%
	
%%%%%%%%%%%%%%
\begin{remark}\label{rem:+precis}
        If $m=0$ and $\nrootp=0$, the point $E_2$ is the only
        equilibrium point in the compact invariant attracting set
        $[0,1]\times[k_2-\epsilon,L]$, for any $\epsilon>0$ such
        that $k_2-\epsilon>0$, thus $E_2$ is globally attractive,
        because there is no cycle around $E_2$ (since $E_2$ is on the
        boundary of $[0,1]\times[k_2-\epsilon,L]$).
        This gives a more general condition of global attractivity of $E_2$
        than the result given in Parts \eqref{item:Am0-iii-special} and
        \eqref{item:Am0-iii} of \Cref{theo:Ainvariant}.
   % %%%%%%%%%%% COMMENTAIRE A CONSERVER
   %      To see that this result is more general, assume that
   %      $k_1+1<ak_2$, and let us prove that
   %      \begin{gather}
   %      		\bigl\lgroup
   %      		4ak_2> (1-k_1-a)^2+4k_1
   %      		\bigr\rgroup\tag{a}\\
   %      		\intertext{or}
   %      		\bigl\lgroup
   %      		ak_2\geq k_1\text{ and } 1-k_1-a< 0
   %      		\bigr\rgroup\tag{b}.
   %     \end{gather}
   %     If $1-k_1-a<0$, the condition (b) is %\eqref{item:m0-nosol} is
   %      satisfied. Assume that $1-k_1-a\geq 0$. Then, since $k_1$ and
   %      $a$ are positive, we have $(1-k_1-a)^2\leq 1$, thus (a) is
   %      satisfied.
   % %%%%%%%%%%%%%%%%%%%%%%%%%%%%%%%%%%%%%%
\end{remark}
	
	%%%%%%%%%%%%%%
	\begin{remark}\label{rem:limitcase}
		Since the roots of the polynomial $R$
		defined by \eqref{eq:R(x)}
		depend continuously
		on its coefficients, %(see \cite{Curgus06}),
		\Cref{prop:m=0nbdepoints}
		expresses the limiting localization of the equilibrium points of
		\eqref{eq:simple} when $m$ goes to 0.
		In particular,
		the case \eqref{item:m0-2sol} of \Cref{prop:m=0nbdepoints}
		is the limiting case of \eqref{cond:n=3} in
		\Cref{theo:nbdepoints}.
		Indeed, it is easy to check that
		Condition \eqref{eq:2positivesol}, with $m=0$, is a
                limit case of
		\eqref{eq:n=3} and \eqref{eq:tong2004}.
		This means that,
		in the case \eqref{cond:n=3} of
		\Cref{theo:nbdepoints},
		when $m$ goes to 0,
		one of the equilibrium points in the open quadrant
		$]0,+\infty[\times ]0,+\infty[$ goes to $E_2$ and leaves the open
		quadrant $]0,+\infty[\times ]0,+\infty[$.
		(Note that, when $m=0$, the equilibrium point $E_2=(0,k_2)$ is in $\invrg$.)
		%lies in the intersection of $\invrg$ and the line defined by \eqref{eq:ystar2}.
	\end{remark}
	%%%%%%%%%

	%%%%%%%%%%%%%%
	\begin{remark}
		When $k_1=k_2:=k$, since $\xx>m$, \Cref{eq:xstar2} is
                equivalent to
		$\xx(1-\xx)=a(\xx-m)$, i.e.,
		$$\xx^2+\xx(a-1)-am,$$
		thus it has at most one positive solution.
		In that case,
		the coordinates of the unique non trivial equilibrium point $\Ec$
		can be explicited in a simple way, and we have
		$$\Ec=\CCO{
			\frac{ 1-a+\sqrt{(1-a)^2+4am} }{2},k+\xc-m
		}.
		$$
		If $a\geq 1$, the point $\Ec$ converges to $E_2$ when $m$ goes to
		0. If $a>1$, it converges to $(1-a,1-a+k)$.
	\end{remark}

	%%%%%%%%%%%%%%%%%%%%%%%%%%%
	\subsubsection{Local stability}
	
	Let $\Ec=(\xc,\yc)$ be an equilibrium point of \eqref{eq:simple} in
	the open quadrant  $]0,+\infty[\times ]0,+\infty[ $.
	Since $\Ec$ is necessarily in $\invrg$, we get,
	using \eqref{eq:jacobian} and \eqref{eq:ystar2},
	\begin{equation}\label{eq:jacobian-invrg}
		\jcb{\xc,\yc}=
		\begin{pmatrix}
			1-2\xc-\frac{a\yc k_1}{(k_1+\xc-m)^2} &
                                                  \frac{-a(\xc-m)}{k_1+\xc-m}\\
			b  & -b
		\end{pmatrix}.
	\end{equation}
	The characteristic polynomial of $\jcb{\xc,\yc}$ is
	\begin{equation*}
		\charac(\lambda)
		=\lambda^2+s\lambda+p,
	\end{equation*}
	where
	\begin{align}
		s&=-\trace\CCO{\jcb{\xc,\yc}}
		=-1+2\xc+\frac{a\yc k_1}{(k_1+\xc-m)^2}+b,
		      \label{eq:s}\\
		p&=\det \CCO{\jcb{\xc,\yc}}
                  =b\CCO{-1+2\xc+\frac{a\yc
                    k_1}{(k_1+\xc-m)^2}+\frac{a(\xc-m)}{k_1+\xc-m}}.
                     \label{eq:p}
	\end{align}
	The roots of $\charac$ are real if, and only if, $\discrimC\geq0$, where
	\begin{equation*}
		\discrimC=s^2-4p=\CCO{ -1+2\xc+\frac{a\yc k_1}{(k_1+\xc-m)^2} -b}^2
		-4b\frac{a(\xc-m)}{k_1+\xc-m}.
	\end{equation*}
        The point $\Ec$ is non-hyperbolic if one of the roots of
        $\charac$ is zero (that is, if $p=0$), or if
        $\charac$ has two conjugate purely imaginary roots (that is,
        if $s=0$ with $p>0$).
       If only one root of $\charac$ is zero, that is, if $p=0$ with
        $s\not=0$, the point $\Ec$ is semi-hyperbolic.
%We will discuss later the non-elementary case.
\paragraph{a- Hyperbolic equilibria}

      When $\Ec$ is hyperbolic, we get,
	using the Routh-Hurwitz criterion, that $\Ec$ is
	% nombre de changements de signe dans la suite $(1,s,p)$, qui est la
	% 1e colonne du tableau de Routh-Hurwitz
	\begin{itemize}
		\item a saddle point if $p<0$,
		\item an unstable node if $s<0$ and $p>0$ with $\discrimC>0$,
		\item an unstable focus if $s<0$ and $p>0$ with $\discrimC<0$,
		\item an unstable degenerated node if $s<0$ and $p>0$ with $\discrimC=0$,
		\item a stable node if $s>0$ and $p>0$ with $\discrimC>0$,
		\item a stable degenerated node if $s>0$ and $p>0$ with $\discrimC=0$,
		\item a stable focus if $s>0$ and $p>0$ with $\discrimC<0$.

	\end{itemize}
 \begin{remark}\label{rem:mgeq1/2}
	An obvious sufficient condition for any equilibrium point
	$\Ec\in\invrg$
	to be stable hyperbolic is $m\geq 1/2$,
	since $\xc>m$. This condition can be slightly improved, as we shall see
        in the study of global stability (see \Cref{theo:globstable}).
\end{remark}

%%%%%%%%%%%%%%%%%%%%%%%%%%%%  POINCARE-HOPF %%%%%%%%%%%%%%%%%%%%%%%%%%%%%%%%%%
\paragraph{Application of the Poincar\'e index theorem}
When $\Ec$ is an hyperbolic equilibrium,
%elementary equilibrium point (that is, hyperbolic, or semi-hyperbolic)
 its index is either $1$ (if
it is a node or focus) or $-1$ (if it is a saddle). Let $\nrootd$ be the number of distinct equilibrium points, which we denote by $E_1^*,...,E_{\nrootd}^*$, and let $I_1,...,I_{\nrootd}$ their respective indices.
As we shall see in the proof of the next theorem, by a generalized version of the Poincar\'e index theorem, we have $I_1+...+I_{\nrootd}=1$.
When all equilibrium points are hyperbolic, this allows us to count the number of nodes or foci and of saddles.

	%%%%%%%%%%%% POINCARE-HOPF INDEX THEOREM
\begin{theorem}\label{theo:poincare-hopf}
%{\bf(application of the Poincar\'e index theorem)}
Assume that all equilibrium points of the system \eqref{eq:simple} which lie in
                the open quadrant $]0,+\infty[\times ]0,+\infty[$
		(equivalently, in the interior of $\invrg$) are
                hyperbolic,
                and let $\nrootp$ be their number.
		\begin{enumerate}
			\item Assume that $m>0$. Then $\nrootp$ is
                          equal to 3 or 1.
			\begin{itemize}

				\item If $\nrootp=1$, the unique
                                  equilibrium point in the interior of
				$\invrg$ is a node or a
				focus.

                        \item If $\nrootp=3$, the system
                                  \eqref{eq:simple} has one saddle
				point and two nodes or foci in the
                                interior of $\invrg$.
			\end{itemize}

			\item Assume now that $m=0$. Then $\nrootp$ is
                          equal to 2, 1, or 0.
                        \begin{itemize}
				\item If $\nrootp=2$, one equilibrium
                                  point is a node or focus, and the
                                  other is a saddle.

                                \item If $\nrootp=1$, the unique
                                  equilibrium point in the interior of
				$\invrg$ is a node or a focus.

			\end{itemize}

		\end{enumerate}
	\end{theorem}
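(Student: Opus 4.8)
The plan is to apply the Poincar\'e--Hopf (Poincar\'e index) theorem to a compact region on whose boundary the field $\vectorfield$ does not vanish, so that the sum of the indices of the enclosed equilibria equals the winding number of $\vectorfield$ along the boundary. Since a hyperbolic node or focus has index $+1$ and a hyperbolic saddle has index $-1$, once this sum is known to equal $1$, a short arithmetic argument on the $\pm1$ indices pins down the number of saddles. The natural region is the attracting set of \Cref{theo:Ainvariant}, and the sign computations of that theorem are exactly what guarantees that $\vectorfield$ has no zero on its boundary.

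First, suppose $m>0$. I take $D=\overline{\invrg}=\{m\le x\le1,\ k_2\le y\le L\}$, which lies in the open quadrant, so its only equilibria are the $\nrootp$ interior ones. A direct check on each edge---$\dot x=m(1-m)>0$ on $x=m$, $\dot x<0$ on $x=1$, $\dot y>0$ on $y=k_2$, $\dot y<0$ on $y=L$ for the relevant ranges, as in \Cref{theo:Ainvariant}---shows that $\vectorfield$ is nonzero and points into $D$ along $\partial D$, including at the four corners. Hence the winding number of $\vectorfield$ along $\partial D$ is $1$, i.e. $I_1+\dots+I_{\nrootp}=1$. By \Cref{theo:nbdepoints}, under the hyperbolicity assumption $\nrootp\in\{1,3\}$ (the value $2$ corresponds to a double root of $R$, hence a non-hyperbolic point). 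If $\nrootp=1$ the lone index is $+1$, a node or focus; if $\nrootp=3$ the only way three numbers in $\{\pm1\}$ can sum to $1$ is $(+1,+1,-1)$, i.e. two nodes or foci and one saddle.

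Now suppose $m=0$. The same region $D=\overline{\invrg}=[0,1]\times[k_2,L]$ is still positively invariant, but now the corner $(0,k_2)=E_2$ is an equilibrium lying on $\partial D$, so the naive index theorem does not apply. I handle this through the generalized (winding-number) form: I replace the corner by a small quarter-circle arc inside $D$, obtaining a region $D'$ with $E_2\notin D'$ whose only equilibria are the $\nrootp$ interior ones, and I compute $\frac{1}{2\pi}\oint_{\partial D'}d(\arg\vectorfield)$ directly. On the edges $x=1$, $y=L$, and on the part of the $y$-axis with $y>k_2$ the analysis is as in the case $m>0$, while along the detour the linearization at $E_2$ governs the rotation of $\vectorfield$. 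Using the Jacobian $\jcb{0,k_2}$ from \Cref{prop:trivial}, whose eigenvalue transverse to the axis is $1-ak_2/k_1$, the arc contributes a half-turn whose sign is controlled by the sign of $ak_2-k_1$; the remaining boundary contributions telescope, and the winding number comes out to be $1$ when $ak_2<k_1$ (so $E_2$ is a saddle) and $0$ when $ak_2>k_1$ (so $E_2$ is a node). By \Cref{prop:m=0nbdepoints}, under the hyperbolicity hypothesis these two sign conditions are exactly the cases $\nrootp=1$ and $\nrootp=2$. Thus when $\nrootp=1$ the single interior index is $+1$ (node or focus), and when $\nrootp=2$ the two indices sum to $0$, forcing one $+1$ and one $-1$, i.e. one node or focus and one saddle.

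The main obstacle is this $m=0$ case, where $E_2$ sits on the boundary of every natural attracting region and the flow is tangent to the $y$-axis there, so the standard index theorem cannot be invoked directly. The substance of the argument is the explicit detour computation: one must verify that $\vectorfield$ does not vanish along the quarter-circle and track $\arg\vectorfield$ through each quadrant to get the half-turn right, the key point being that the sign of its contribution---and hence whether the enclosed index sum is $1$ or $0$---is dictated precisely by whether $E_2$ is a boundary saddle or a boundary node, i.e. by the sign of $ak_2-k_1$. The borderline $ak_2=k_1$ makes $E_2$ non-hyperbolic and is excluded here, but it is recovered by continuity of the indices, cf. \Cref{rem:limitcase}.
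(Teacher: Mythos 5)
Your proof is correct, and its $m>0$ half is essentially the paper's argument: an index computation on the attracting rectangle using that the field never points outward along the boundary (note that at the corners $(m,k_2)$ and $(1,L)$ the field is actually tangent, not strictly inward, which is harmless for the winding-number identity and which the paper handles by rounding corners), together with the observation that $\nrootp=2$ forces a double root of $R$, i.e.\ a tangential intersection of the isoclines and hence a zero eigenvalue. Your $m=0$ half, however, takes a genuinely different route. The paper encloses $E_2$ in a slightly larger rectangle $\invrgB=[-\epsilon,1]\times[k_2-\epsilon,L]$, so that $E_2$ becomes an interior singularity of known index (\Cref{prop:trivial}), and then gets the index sum either from the inward-pointing index theorem (case $ak_2>k_1$) or, when $ak_2<k_1$ and the field exits through part of $\partial\invrgB$, from Pugh's algorithm with Euler characteristics of the exit sets. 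You instead excise $E_2$ by a small interior quarter-circle and evaluate the winding number along the indented boundary directly. Your sign claims check out: in local coordinates $(\xi,\eta)=(x,y-k_2)$, the field on the arc is, to leading order, $\bigl(\lambda\cos\theta,\ b(\cos\theta-\sin\theta)\bigr)$ with $\lambda=1-ak_2/k_1$, and its argument increases by $\tfrac{\pi}{2}+\arctan(b/\lambda)$ when $\lambda>0$ (boundary saddle) and decreases by the analogous amount when $\lambda<0$ (boundary node), leading to total winding $1$, respectively $0$, in agreement with the values $N-S=1$ and $N-S=0$ obtained in the paper's proof; moreover, the tangency of $\vectorfield$ along the invariant segment $\{0\}\times\,]k_2,L]$ costs nothing, since the identity between winding number and enclosed indices only needs the field to be nonvanishing on the curve (so your phrase ``as in the case $m>0$'' is not literally accurate there, but nothing breaks). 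What your route buys is self-containedness: no Pugh formula and no inward/outward analysis of an enlarged domain, just one local computation at $E_2$; what it costs is that this computation is the real substance of the proof and your write-up only sketches it. Finally, both you and the paper split into $ak_2>k_1$ and $ak_2<k_1$ and skip the borderline $ak_2=k_1$, where $E_2$ is semi-hyperbolic while the unique interior equilibrium can still be hyperbolic, so the theorem's hypothesis is satisfied; your closing suggestion to recover this case by a small parameter perturbation and local constancy of the index of hyperbolic equilibria is a legitimate repair which the paper itself does not supply.
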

	%%%%%%%%%
% \begin{remark}
% %If $\nrootp=2$, then necessarily one critical point is nonhyperbolic.
% \end{remark}
	\begin{proof} %{\Cref{theo:poincare-hopf}}
        Let $N$ (respectively $S$) denote the number of nodes or foci
        (respectively of saddles) among the hyperbolic singular points
        which lie in $\invrg$.
        % Let us denote by $\vectorfield$ the vector field
	% associated with \eqref{eq:simple}.

        \bigskip
	1. Assume that $m>0$.
	By \Cref{theo:Ainvariant}, the vector field
        $\vectorfield=v_1\frac{\partial}{\partial x}
                       +v_2\frac{\partial}{\partial y}$
        generated by \eqref{eq:simple}
	is directed inward along the boundary of $\invrg$.
	By continuity of $\vectorfield$, we can round the corners of $\invrg$
	and define a compact domain
	$\invrg'\subset\invrg$ with smooth boundary
	which contains all
	critical points of $\invrg$,	and such that $\vectorfield$
	is directed inward along the boundary of $\invrg'$.
	% Let $N$ denote the number of nodes or
	% foci and let $S$ denote the number of saddles in the interior
        % of $\invrg$.
	Applying a generalized version of the Poincar\'e index theorem
        (see e.g.
	\cite{llibre-villadelprat98,gottlieb86,pugh})
        to $\vectorfield$ in $\invrg'$,
	%(see \cite[Section 3.12]{perko}), %\cite{cima}
	we get $N-S=1$. Since $1\leq N+S\leq 3$, the only possibilities are
	$(N=1\text{ and }S=0)$ or $(N=2 \text{ and }S=1)$.

        \bigskip
        %%%% $m=0$, $ak_2>k_1$
	2. Assume now that $m=0$. We use the same reasoning as for $m>0$,
        but with a different domain.
        Instead of $\invrg$, we consider the domain
        $$\invrgB=[-\epsilon,1]\times[k_2-\epsilon,L]$$
        for a small $\epsilon>0$. Thus $\invrgB$ contains $E_2$.

       \medskip
       \noindent$\bullet$        With the notations of \eqref{eq:xm=0},
        if $ak_2>k_1$, we have $y>U(x)$ for $x=0$ and for all
        $y\in[k_2,L]$.
        We have
        $$v_1=\frac{ax}{k_1+x}\Bigl(U(x)-y \Bigr).$$
        By continuity of $\vectorfield$,
        we can choose $\epsilon>0$, with $\epsilon<k_1$,
        such that the inequality $y>U(x)$
        remains true on the rectangle
        $[-\epsilon,0]\times[k_2-\epsilon,L]$.
        We then have $v_1>0$ on
        the segment $\{-\epsilon\}\times[k_2-\epsilon,L]$.
        Since $v_2>0$ for $y=k_2-\epsilon$ and $v_2<0$ for $y=L$,
        the field $\vectorfield$
	is directed inward along the boundary of $\invrgB$.
        Again, by rounding the corners,
        we can modify $\invrgB$ into a a compact domain
	$\invrgB'$ with smooth boundary which contains the same
        critical points as $\invrgB$ and such that $\vectorfield$
	is directed inward along the boundary of $\invrgB'$.
        By the Poincar\'e Index Theorem, we have
	$N'-S'=1$, where $N'$ (respectively $S'$)
        is the number of nodes or foci (respectively of saddles) in
        the interior of $\invrgB'$. If we have chosen $\epsilon$ small
        enough, the singularities of $\vectorfield$ in $\invrgB'$ are
        those which are in the interior of $\invrg$, with the addition
        of the point $E_2$, which is a node by %Proposition
        \Cref{prop:trivial}. % Thus, if $N$ (respectively $S$)
        % is the number of nodes or foci (respectively, of saddles) in
        % the interior of $\invrg$, we have
        Thus $N=N'-1$ and $S=S'$ which
        entails $N-S=0$.
	Thus, taking into account \Cref{prop:m=0nbdepoints},
        we have $N=S=1$ (if $\nrootp=2$), or $N=S=0$ (if $\nrootp=0$).

       \medskip
       \noindent$\bullet$
        If $ak_2<k_1$, $E_2$ is a saddle point, thus,
        constructing $\invrgB$ and $\invrgB'$ as precedingly,
         we have now
         $S=S'-1$ and $N=N'$.
        Furthermore, the vector field $\vectorfield$ is no more
        outward directed along the whole boundary of $\invrgB'$.%   so we
        % need Morse's generalization of the Hopf-Poincar\'e index
        % theorem \cite{morse29}.
        We use Pugh's algorithm \cite{pugh} to compute $N'-S'$:
        taking $\epsilon$ small enough such that
        the vector field $\vectorfield$ does not vanish on
        $\partial \invrgB'$, we have
        \begin{equation}\label{eq:pugh}
          N'-S'=\chi(\invrgB')-\chi(\partial \invrgB')
           +\chi(R^1_-)-\chi(\partial R^1_-)
           +\chi(R^2_-)-\chi(\partial R^2_-),
        \end{equation}
        where $\chi$ denotes the Euler characteristic, $R^1_-$ is the
        part of the boundary of $\invrgB'$ where $\vectorfield$ is
        directed outward, and $R^2_-$ is the part of $\partial R^1_-$ where
        $\vectorfield$ points to the exterior of $R^1_-$.
        Since $k_2<k_1/a$, we see that
        the parabola $y=U(x)$ crosses
        the line  $\{x=-\epsilon\tq y>k_2\}$
        at some point $(-\epsilon,\tangency)$,
        so that the part of the boundary of $\invrgB$ where
        $\vectorfield$ points outward is the segment
        $\{-\epsilon\}\times[k_2-\epsilon,\min(\tangency,L)]$.
        Thus, for small $\epsilon$, % the part of the boundary of
        % $\invrgB'$ along which $\vectorfield$ is directed outward
        % is an open arc $R^1_+$ whose extremities are tangency
        % points.
        $R^1_-$ is an arc whose extremities are tangency
        points.
        Observe also that, since $v_1<0$ for $x<0$ and $v_2<0$ for
        $y>k_2+x>0$, the field $\vectorfield$
        points toward the interior of $R^1_-$
        at those tangency points, thus $R^2_-$ is empty.
        Formula \eqref{eq:pugh} becomes
       \begin{equation*}
       \Sigma(\vectorfield)=1-0
        +1-2
         +0-0=0,
       \end{equation*}
       that is,
       $N-S=N'-(S'-1)=1$. Since, by \Cref{prop:m=0nbdepoints},
%$\nrootp$
we have $N+S=1$, we deduce that $N=1$ and $S=0$.
\end{proof}

%%%%% NON HYPERBOLIC EQUILIBRIA
\paragraph{b. Semi hyperbolic equilibria}
 %% CAS semi-hyperbolique
        % $\bullet$ {\em Semi-Hyperbolic case:}
         This is when
          $p=0$ and $s\not= 0$. The set of parameters such that $p= 0$ is nonempty. Indeed, the values $a=0,5$, $b=0,01$, $m=0,001$, $k_2=0,25$, $k_1=0,08$ lead to $p=-0.1003032464$ with $\alpha_2=0.044161 >0$ and $a=0,5$, $b=0,01$, $m=0,001$, $k_2=0,25$, $k_1=0,112$ lead to $p=0.002422466814$ with $\alpha_2= 0.012225>0.$
 Since $\alpha_2$ is a linear function of $k_1$, this shows that $\alpha_2>0$ for $a=0,5$, $b=0,01$, $m=0,001$, $k_2=0,25$ and $0,08 \leq k_1 \leq 0,112.$
  Thus, by \Cref{theo:nbdepoints}, for all these values, the number $\nrootp$ of equilibrium points remains equal to $1$.
           By the intermediate value theorem, we deduce that there exists a value $k_1$, with $0,08 \leq k_1 \leq 0,112$, such that, for $a=0,5$, $b=0,01$, $m=0,001$, $k_2=0,25$, the unique equilibrium point satisfies $p=0.$

         From \eqref{eq:s},  \eqref{eq:p} and \eqref{eq:Rcoeffs}, it is obvious that we can chose $b$ such that $s \not= 0$ without changing $p=0$ nor the coefficients $\alpha_0$, $\alpha_1$, $\alpha_2$.

          For $p=0$, the
Jacobian matrix $\jcb{\xc,\yc}$ is
\begin{equation*}
		\jcb{\xc,\yc}=%\vphi'(0,0)=
		\begin{pmatrix}
			a \rho & -a \rho\\
			b  & -b
		\end{pmatrix}.
	\end{equation*}
The change of variables
$$\KX=\frac{a\rho \YY-b\XX}{a\rho-b},\quad \KY=\frac{\XX-\YY}{a\rho-b}$$
yields	
	\begin{align*}	
v_1=&a\rho (a \rho-b)\KY+a^2 \rho\frac {k_1(\yc a\rho-b\kappa)-\rho \kappa^3}{\kappa^3}\KY^2-\frac {a k_1(\kappa-\yc )+\kappa^3}{\kappa^3}\KX^2\\	
&-a\frac {k_1\kappa(b+a\rho )+\rho(2 \kappa^3-\yc k_1a)}{\kappa^3}\KY\KX+a^3k_1 \rho^2\frac {b\kappa-\yc a\rho}{\kappa^3(\kappa+\KX+\rho\,a\KY)}\KY^3\\
&-a k_1\frac { \yc-\kappa }{\kappa^3(\kappa+\KX+\rho a\KY)}\KX^3+a k_1\frac {b\kappa+2a\rho\kappa-3\yc a\rho }{\kappa^3(\kappa+\KX+\rho a\KY)}\KX^2\KY\\
&+a^2 k_1\rho \frac {2 b\kappa+a\rho\kappa-3\yc a\rho}{\kappa^3(\kappa+\KX+\rho a\KY)}\KY^2\KX,\\
v_2=&b(\rho\,a-b)\KY +b\frac {-b^2+2b\rho\,a-\rho^2a^2}{\KX+\rho\,a\KY+\yc}\KY^2.
\end{align*}
The coordinates of $\vectorfield$ are, in the basis
$(\frac{\partial}{\partial \KX},\frac{\partial}{\partial \KY})$,
\begin{align*}
  \dot{\KX}=&\,\frac{1}{a\rho-b}(a\rho \dot{\YY}-b\dot{\XX})=\frac{1}{a\rho-b}(a\rho v_2-bv_1)\\
   = &\, \frac{b}{b-\rho\,a}\Biggl\lgroup -\frac {(- k_1\yc
       a+\kappa^3+ak_1)\KX^2}{\kappa^2}+\frac {a k_1(\kappa - \yc
       )}{\kappa^3(\kappa+\KX+\rho\,a\KY)}\KX^3\\
       &+\frac{a\rho(-\kappa^3\rho a+\yc k_1a^2\rho-\kappa a k_1 b)}{\kappa^3} \KY^2
    +\frac {a^3 k_1\rho^2(\kappa b-\yc
         a\rho)}{\kappa^3(\kappa+\KX+\rho\,a\KY)}\KY^3\\
       &-\frac {a\kappa(k_1 b+a k_1\rho\,+2\kappa^2\rho\,)-2\yc k_1a^2\rho\,}{\kappa^3}\KY\KX-\rho\,a \frac {-b^2+2b\rho\,a-\rho^2a^2}{\KX+\rho\,a\KY+\yc}\KY^2\\
    &+\frac{a k_1(b\kappa +2a\kappa \rho\,-3\yc a\rho\,)}{\kappa^3(\kappa+\KX+\rho\,a\KY)}\KX^2\KY+\frac{a^2 k_1\rho(2b\kappa+a\kappa\rho-3\yc a\rho)}{\kappa^3(\kappa+\KX+\rho\,a\KY)}\KY^2\KX\Biggr\rgroup,\\
      \dot{\KY}=&\,-\frac{1}{a\rho-b}(\dot{\XX}-\dot{\YY})=\frac{1}{a\rho-b}( v_1-v_2)\\
           =&\,(a\rho-b)\KY+\frac{1}{b-\rho\,a}\Biggl\lgroup a^2\rho \frac{ k_1b\kappa+\rho \kappa^3-\yc k_1a\rho}{\kappa^3}\KY^2+\frac {a k_1\kappa- k_1\yc a+\kappa^3}{\kappa^3}\KX^2 \\
&+a\frac { k_1 b\kappa+ak_1\rho \kappa+2\rho\,\kappa^3-2\yc k_1a\rho\,}{\kappa^3}\KX\KY
+k_1a^3\rho^2\frac {\yc a\rho-b \kappa}{\kappa^3(\kappa+\KX+\rho\,a\KY)}\KY^3\\
&+a k_1\frac {\yc -\kappa}{\kappa^3 (\kappa+\KX+\rho\,a\KY)}\KX^3 
 +b \frac {(a\rho-b)^2}{\KX+\rho\,a\KY+\yc}\KY^2\\
&+a k_1\frac {3\yc a\rho\,-\kappa (b+2a\rho\,)}{\kappa^3(\kappa+\KX+\rho\,a\KY)}\KX^2\KY+ k_1 a^2 \rho \frac {3\yc a \rho-a\rho\kappa-2b \kappa}{\kappa^3 (\kappa+\KX+\rho\,a\KY)}\KY^2\KX \Biggr\rgroup .
\end{align*}
We can thus write
	\begin{equation}\label{semihyper}
\begin{aligned}
  \dot{\KX}=&\,A(\KX,\KY),\\
  \dot{\KY}=&\,\lambda \KY  +B(\KX,\KY),
\end{aligned}
\end{equation}
where $A$ and $B$ are analytic and their jacobian matrix at $(0,0)$ is
$0$ and $\lambda >0$. It is not easy to determine  $\KY = f(\KX)$  the solution to the equation $\lambda \KY +B(\KX, \KY) = 0$ in a neighborhood of the point $(0, 0)$, for that we use implicit function theorem. We find:

\textbf{Case 1:} If $\kappa^3- k \yc a+a k_1\kappa \neq 0$, we have
\begin{equation*}
f(\KX)= -\frac {\kappa^3- k \yc a+a k_1\kappa}{\kappa^3 ( b+\rho^2a^2-\rho\,ab )}\KX^2,
\end{equation*}
and $g(\KX)=A(\KX,f(\KX))$ has the form
\begin{equation*}
g(\KX)=\frac{b}{b-a\rho}(\frac{\kappa^3- k \yc a+a k_1\kappa}{\kappa^3})\KX^2.
\end{equation*}
We apply  \cite[Theorem 2.19]{dumortier-llibre-artes} to System \eqref{semihyper}. Since the power of $\KX$ in $f(\KX)$ is even,
we deduce from Part (iii) of \cite[Theorem 2.19]{dumortier-llibre-artes}:
\begin{lemma}%\label{lem:saddlenode}
If $\Ec$ is a semi-hyperbolic equilibrium of \eqref{eq:simple} in the positive quadrant
 $]0,+\infty[\times ]0,+\infty[$, and if $\kappa^3- k \yc a+a k_1\kappa \neq 0$, then $\Ec$ is a saddle-node, that is,
its phase portrait is the union of one parabolic and two hyperbolic sectors. In this case, the index of $\Ec$ is 0.
\end{lemma}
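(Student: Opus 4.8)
The plan is to invoke the classification of semi-hyperbolic singularities given by \cite[Theorem 2.19]{dumortier-llibre-artes}, applied to the normal form \eqref{semihyper}. That theorem treats a system $\dot{\KX}=A(\KX,\KY)$, $\dot{\KY}=\lambda\KY+B(\KX,\KY)$ with $A,B$ analytic, vanishing at the origin together with their first derivatives, and $\lambda\neq 0$; after rescaling time so that $\lambda=1$ (which leaves the topological type unchanged), it reads off the local phase portrait from the order $m\geq 2$ and the leading coefficient of the reduced function $g(\KX)=A(\KX,f(\KX))$, where $\KY=f(\KX)$ is the local invariant curve solving $\lambda\KY+B(\KX,\KY)=0$.

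First I would record that the change of variables producing \eqref{semihyper} has already cast the system into exactly this form, with $\lambda=a\rho-b$, and that $\lambda>0$ because $b$ has been chosen so that $s\neq 0$. The implicit function theorem applies since $\partial_{\KY}(\lambda\KY+B)|_{(0,0)}=\lambda\neq 0$, and (as computed above) yields $f(\KX)=-\dfrac{\kappa^3-k\yc a+ak_1\kappa}{\kappa^3(\rho a\lambda+b)}\KX^2+O(\KX^3)$ — here the denominator $b+\rho^2a^2-\rho ab=\rho a\lambda+b$ is positive, so $f$ is well defined — whence $g(\KX)=A(\KX,f(\KX))=\dfrac{b}{b-a\rho}\cdot\dfrac{\kappa^3-k\yc a+ak_1\kappa}{\kappa^3}\KX^2+O(\KX^3)$.

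The decisive observation is then immediate: under the hypothesis $\kappa^3-k\yc a+ak_1\kappa\neq 0$, and since $b-a\rho=-\lambda\neq 0$, the coefficient of $\KX^2$ in $g$ is nonzero, so $g$ has even leading order $m=2$. Part (iii) of \cite[Theorem 2.19]{dumortier-llibre-artes} then gives that $\Ec$ is a saddle-node, its phase portrait being the union of one parabolic and two hyperbolic sectors. The index follows from the sectorial (Bendixson) formula $1+(e-h)/2$ with $e=0$ elliptic and $h=2$ hyperbolic sectors, giving index $1+(0-2)/2=0$. The only point requiring care is to confirm that \eqref{semihyper} meets the hypotheses of \cite[Theorem 2.19]{dumortier-llibre-artes} verbatim — that $\Ec$ is an isolated singularity and that the sign $\lambda>0$ affects only the orientation and stability of the sectors, not the even-$m$ saddle-node verdict, which depends solely on the parity of $m$ — since the heavy computation of $f$ and $g$ has already been carried out.
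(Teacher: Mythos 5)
Your proposal is correct and takes essentially the same route as the paper: both compute the invariant-curve function $f$ and the reduced function $g$ for the normal form \eqref{semihyper}, observe that under the hypothesis $\kappa^3- k \yc a+a k_1\kappa \neq 0$ the leading term of $g$ is $\KX^2$ (even order, nonzero coefficient), and conclude from Part (iii) of \cite[Theorem 2.19]{dumortier-llibre-artes} that $\Ec$ is a saddle-node of index $0$. Two minor remarks: your inference of $\lambda>0$ from $s\neq 0$ is not valid (it only gives $\lambda\neq 0$), but as you yourself note the sign of $\lambda$ is immaterial for the even-order saddle-node verdict; and you correctly key the parity test on $g$ rather than on $f$, which is the right reading of the theorem (the paper's phrasing ``the power of $\KX$ in $f(\KX)$ is even'' is a slip, though both happen to have leading order $2$ here).
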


\textbf{Case 2:} if $\kappa^3- k \yc a+a k_1\kappa = 0$, we have
\begin{equation*}
f(\KX)= \frac{ak_1(\kappa -\yc)}{\kappa^4(a\rho -b)^2}\KX^3,
\end{equation*}
and $g(\KX)=A(\KX,f(\KX))$ has the form
\begin{equation*}
g(\KX)= \frac{bak_1(\kappa -\yc)}{\kappa^4(a\rho -b)^2}\KX^3.
\end{equation*}
Again, we apply  \cite[Theorem 2.19]{dumortier-llibre-artes} to System \eqref{semihyper}. Since the power of $\KX$ in $f(\KX)$ is odd, we look at the cofficient of $\KX^3$ and we have two possibilities:\\
\textbf{P1:} If $k_1>k_2$, we deduce from Part (ii) of \cite[Theorem 2.19]{dumortier-llibre-artes}:
\begin{lemma}%\label{lem:saddlenode}
If $\Ec$ is a semi-hyperbolic equilibrium of \eqref{eq:simple} in the positive quadrant
 $]0,+\infty[\times ]0,+\infty[$, and if $\kappa^3- k \yc a+a k_1\kappa = 0$ with $k_1>k_2$, then $\Ec$ is a unstable node. In this case, the index of $\Ec$ is 1.
\end{lemma}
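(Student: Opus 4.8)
The plan is to read the conclusion directly off the reduction already carried out for \eqref{semihyper} and then invoke the semi-hyperbolic singularity theorem \cite[Theorem 2.19]{dumortier-llibre-artes}. System \eqref{semihyper} is exactly in the normal form required by that theorem, namely $\dot{\KX}=A(\KX,\KY)$, $\dot{\KY}=\lambda\KY+B(\KX,\KY)$ with $A,B$ analytic, having zero first-order part at the origin, and $\lambda>0$. In Case 2 the reduced one-dimensional dynamics on the center manifold $\KY=f(\KX)$ has already been computed to be the pure cubic
\begin{equation*}
g(\KX)=A(\KX,f(\KX))=\frac{b a k_1(\kappa-\yc)}{\kappa^4(a\rho-b)^2}\,\KX^3+\cdots,
\end{equation*}
so the order of vanishing is $m=3$, which is odd, and the leading coefficient is $a_3=\dfrac{b a k_1(\kappa-\yc)}{\kappa^4(a\rho-b)^2}$.

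First I would settle the sign of $a_3$. The factors $a$, $b$, $k_1$, $\kappa^4$ are all positive, and $(a\rho-b)^2>0$ because semi-hyperbolicity forces $s\neq 0$, i.e.\ $a\rho\neq b$. Hence $\operatorname{sign}(a_3)=\operatorname{sign}(\kappa-\yc)$. I then use the two constraints satisfied by $\Ec=(\xc,\yc)$: since $\Ec$ lies in $\invrg$ we have $\xc>m$, so $\kappa=k_1+(\xc-m)_+=k_1+\xc-m$, while \eqref{eq:ystar2} gives $\yc=k_2+\xc-m$. Subtracting, $\kappa-\yc=k_1-k_2$. Under the standing hypothesis $k_1>k_2$ this is strictly positive, whence $a_3>0$.

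With $m=3$ odd, $a_3>0$ and $\lambda>0$, part (ii) of \cite[Theorem 2.19]{dumortier-llibre-artes} applies verbatim and yields that $\Ec$ is an unstable node. The interpretation is transparent: on the center manifold $\dot{\KX}=a_3\KX^3+\cdots$ with $a_3>0$ makes $\KX=0$ a source, and the transverse direction is repelling since $\lambda>0$, so trajectories are pushed outward in both directions. Being a node, $\Ec$ has Poincar\'e index $+1$, as claimed.

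The only thing requiring care is the bookkeeping of signs and the matching to the correct branch of the cited theorem; there is no genuine obstacle, since the heavy symbolic computation producing $g$ is already in hand. I would merely double-check that $g\not\equiv 0$ (guaranteed by $a_3\neq 0$, i.e.\ $k_1\neq k_2$), so that $(0,0)$ is an isolated singular point and Theorem 2.19 is applicable, and that the normal-form hypotheses ($A,B$ with vanishing linear part at the origin, $\lambda>0$) indeed hold --- both of which were established when \eqref{semihyper} was derived.
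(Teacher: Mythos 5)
Your proposal is correct and follows essentially the same route as the paper: both arguments read the odd leading power and the coefficient $\frac{abk_1(\kappa-\yc)}{\kappa^4(a\rho-b)^2}$ of $g(\KX)$ off the reduction already performed for \eqref{semihyper}, and then invoke Part (ii) of \cite[Theorem 2.19]{dumortier-llibre-artes} to conclude that $\Ec$ is an unstable node of index $1$. The only difference is that you spell out the sign bookkeeping the paper leaves implicit, namely that $(a\rho-b)^2>0$ because semi-hyperbolicity forces $s\neq 0$, and that $\kappa-\yc=(\xc+k_1-m)-(k_2+\xc-m)=k_1-k_2>0$ under the stated hypothesis.
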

\noindent\textbf{P2:} If $k_1<k_2$, we deduce from Part (i) of \cite[Theorem 2.19]{dumortier-llibre-artes}:
\begin{lemma}%\label{lem:saddlenode}
If $\Ec$ is a semi-hyperbolic equilibrium of \eqref{eq:simple} in the positive quadrant
 $]0,+\infty[\times ]0,+\infty[$, and if $\kappa^3- k \yc a+a k_1\kappa = 0$ with $k_1<k_2$, then $\Ec$ is a saddle. In this case, the index of $\Ec$ is -1.
 \end{lemma}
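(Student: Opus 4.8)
The plan is to read the conclusion off the normal form \eqref{semihyper} together with the semi-hyperbolic classification, exactly as in the two cases treated just above. We are in Case~2, where $\kappa^3-k_1\yc a+ak_1\kappa=0$, so the reduced scalar function is
\[
g(\KX)=A(\KX,f(\KX))=\frac{bak_1(\kappa-\yc)}{\kappa^4(a\rho-b)^2}\,\KX^3+O(\KX^4),
\]
whose leading exponent $m=3$ is odd. Since in \eqref{semihyper} we have $\lambda>0$, everything hinges on the sign of the coefficient of $\KX^3$.

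First I would check that every factor in that coefficient other than $\kappa-\yc$ is strictly positive. Indeed $a$, $b$, $k_1$ and $\kappa^4$ are positive, and $(a\rho-b)^2>0$ because $a\rho-b=-s\neq 0$: the trace of $\jcb{\xc,\yc}$ equals $a\rho-b$, so the defining hypothesis $s\neq 0$ of a semi-hyperbolic point guarantees $a\rho-b\neq 0$. Hence the sign of the coefficient of $\KX^3$ is exactly the sign of $\kappa-\yc$. Next I would identify this quantity in terms of the original parameters: by definition $\kappa=k_1+\xc-m$ is the denominator appearing in \eqref{eq:jacobian-invrg}, while the equilibrium relation \eqref{eq:ystar2} gives $\yc=k_2+\xc-m$. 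Subtracting, $\kappa-\yc=k_1-k_2$, which is strictly negative under the hypothesis $k_1<k_2$. Consequently the leading coefficient of $g$ is negative.

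Finally, since the leading term of $g$ is $c\KX^3$ with $m=3$ odd and $c<0$, and since $\lambda>0$, Part~(i) of \cite[Theorem~2.19]{dumortier-llibre-artes} shows that the origin of \eqref{semihyper}, and therefore $\Ec$, is a topological saddle; a topological saddle has Poincar\'e index $-1$. I do not expect any genuine obstacle here: the heavy computation (the linearizing change of variables and the explicit series for $A$, $B$, $f$ and $g$) was already carried out before the statement, so the only points requiring care are the positivity of the auxiliary factors $a$, $b$, $k_1$, $\kappa^4$, $(a\rho-b)^2$ and the elementary identity $\kappa-\yc=k_1-k_2$, both of which are immediate.
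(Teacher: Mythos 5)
Your proof is correct and follows essentially the same route as the paper: read off the parity and sign of the leading coefficient of $g(\KX)$ in the normal form \eqref{semihyper} and invoke Part (i) of \cite[Theorem 2.19]{dumortier-llibre-artes}. The only difference is expository — you make explicit the identity $\kappa-\yc=k_1-k_2$ (via \eqref{eq:zc-cc} and \eqref{eq:ystar2}) and the nonvanishing of $(a\rho-b)^2$, which the paper leaves implicit when passing from the sign of the coefficient of $\KX^3$ to the condition $k_1<k_2$.
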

 \begin{remark}
From \Cref{theo:poincare-hopf}, when the system \eqref{eq:simple} has one equilibrium point, this point cannot be a saddle.
\end{remark}
%%%% HOPF BIFURCATION
\paragraph{Hopf bifurcation}
When $\discrimC<0$, the roots of $\charac$ are
$\frac{-s\pm i\sqrt{4p-s^2}}{2}$.
The values of $\xc$, $\yc$ and $p$ do
not depend on the parameter $b$, whereas $s$ is an affine function of
$b$, so that the eigenvalues of $\charac$ cross the imaginary axis at
speed $-1/2$ when $b$ passes through the value
$$b_0=1-2\xc+\frac{a\yc k_1}{\zc^2}.$$
Let us check the genericity condition for Hopf bifurcations.
We use the condition
 of Guckenheimer and Holmes
\cite[Formula (3.4.11)]{guckenheimer-holmes83}.
%(see also \cite{vanderbauwhede})
%\cite{wang90nonzero}
Let us denote
$$\dot{\KX}=\vfA(\KX,\KY),\quad \dot{\KY}=\vfB(\KX,\KY),$$
and $\vfA_{\KX\KY}=\frac{\partial \vfA}{\partial\KX\partial\KY}$, etc. We have
%%% GENERICITY CONDITION
\begin{align*}
 \lambda=&\vfA_{\KX\KX\KX}+\vfA_{\KX\KY\KY}+\vfB_{\KX\KX\KY}+\vfB_{\KY\KY\KY}\\
&+\frac{1}{\dzo}
\CCO{
   \vfA_{\KX\KY}(\vfA_{\KX\KX}+\vfA_{\KY\KY})-\vfB_{\KX\KY}(\vfB_{\KX\KX}+\vfB_{\KY\KY})
   -\vfA_{\KX\KX}\vfB_{\KX\KX}+\vfA_{\KY\KY}\vfB_{\KY\KY}
}\\
  =&a k_1\kappa^3 ( -2 \yc+\kappa) b_0^2\\
   & +\kappa\ ( 2 c \kappa^5+2
      k_1\yc a \kappa^3-\kappa^3c k_1 a+3\kappa c k_1\yc^2 a
      +\kappa a^2 k_1^2 \yc-2 a^2 k_1^2 \yc^2) b_0\\ 
 & - \ 2 c ( c-\yc) \kappa^6
   +a k_1 \kappa^4 c \yc+2a c\yc k_1( -2 \yc+c) \kappa^3\\
  &-3\kappa^2 k_1\yc^2 a c^2-a^2 k_1^2 \kappa c \yc^2+2 k_1^2 \yc^3 a^2c.
\end{align*}
If $\lambda < 0$, then the periodic solutions are stable limit cycles,
while if $\lambda > 0$, the periodic solutions are repelling. See %Figure
\Cref{hopf} for a numerical exemple.
	
%%%%%%%%%%%%%%%%%%%%%%%%%%%%%%%%%%%%%%%%%%%%%%%%%%%%%%%%%%%%%%%%%%%%
%\bigskip
         %% CAS NILPOTENT
   \paragraph*{c- Non-elementary equilibria} Let us rewrite the vector field
$\vectorfield=v_1\frac{\partial}{\partial x}
                       +v_2\frac{\partial}{\partial y}$
associated with \eqref{eq:simple} in the neighborhood
of an equilibrium point $\Ec=(\xc,\yc)\in\invrg$.
Let $\XX=x-\xc$ and $\YY=y-\yc$.
Since $\Ec$ is a critical point of $\vectorfield$,  we have
\begin{align*}
  v_1=&\, x(1-x)-\frac{a y(x-m)}{k_1+(x-m)}\\
     =&\, (\XX+\xc)(1-\xc-\XX)-\frac{a (\YY+\yc)(\XX+\xc-m)}{\XX+\xc+k_1-m}\\
     =&\, \xc(1-\xc)+\XX(1-2\xc-\XX)-\frac{a \yc(\xc-m)}{\XX+\xc+k_1-m}
          -\frac{a (\YY(\XX+\xc-m)+\XX\yc)}{\XX+\xc+k_1-m}\\
     =&\, \xc(1-\xc)-\frac{a \yc(\xc-m)}{\xc+k_1-m}
          +\XX(1-2\xc-\XX)\\
      &\,     +\frac{a \yc(\xc-m)}{\xc+k_1-m}
          -\frac{a \yc(\xc-m)}{\XX+\xc+k_1-m}
          -\frac{a (\YY(\XX+\xc-m)+\XX\yc)}{\XX+\xc+k_1-m}\\
     =&\,        \XX(1-2\xc-\XX)
          +\frac{a \yc(\xc-m)}{\xc+k_1-m}
          -\frac{a \yc(\xc-m)}{\XX+\xc+k_1-m}
          -\frac{a (\YY(\XX+\xc-m)+\XX\yc)}{\XX+\xc+k_1-m}\\
    =&\,        \XX(1-2\xc-\XX)
          +a \yc(\xc-m)\CCO{\frac{1}{\xc+k_1-m}
          -\frac{1}{\XX+\xc+k_1-m}}\\
     &\,     -\frac{a \Bigl(\YY(\XX+\xc-m)+\XX\yc\Bigr)}{\XX+\xc+k_1-m}\\
    =&\,\XX(1-2\xc-\XX)
          +\frac{a\yc(\xc-m)\XX}{(\xc+k_1-m)(\XX+\xc+k_1-m)}
          -\frac{a \Bigl(\YY(\XX+\xc-m)+\XX\yc\Bigr)}{\XX+\xc+k_1-m}.
\end{align*}
%%%%
For simplification, we denote
\begin{equation}%\label{eq:zc}
\zc=\xc+k_1-m,\quad \cc=\frac{\xc-m}{\xc+k_1-m},\label{eq:zc-cc}
\end{equation}
thus
\begin{equation*}
  %\label{eq:XX}
  v_1=\XX(1-2\xc-\XX)
          +\frac{a \Bigl(\XX\yc(\cc-1)-\YY(\xc-m)-\YY\XX\Bigr)}{\XX+\zc}.
\end{equation*}
Using the equality
%$$\frac{1}{x+K}=\frac{1}{K}-\frac{x}{K^2}+\frac{x^2}{K^2(x+K)},$$
\begin{equation*}
\frac{1}{x+K}=\frac{1}{K}\CCO{1-\frac{x}{K}+\dots+(-1)^n\frac{x^n}{K^n}
+(-1)^{n+1}\frac{x^{n+1}}{K^n(x+K)} },\quad
n\geq 1,
\end{equation*}
we get
\begin{align}
  v_1=&\,\XX(1-2\xc-\XX)\notag\\
     &\,+\frac{a}{\zc}
       \CCO{1-\frac{\XX}{\zc}+\frac{\XX^2}{\zc(\XX+\zc)}}
        \Bigl(\XX\yc(\cc-1)-\YY(\xc-m)-\YY\XX\Bigr)\notag\\
     =&\,\XX(1-2\xc-\XX)\notag\\
      &\,+\frac{a}{\zc}\Bigl(\XX\yc(\cc-1)-\YY(\xc-m)-\YY\XX\Bigr)\notag\\
      &\,-\frac{a\XX}{\zc^2}\Bigl(\XX\yc(\cc-1)-\YY(\xc-m)-\YY\XX\Bigr)\notag\\
      &\,+\frac{a\XX^2}{\zc^2(\XX+\zc)}
                \Bigl(\XX\yc(\cc-1)-\YY(\xc-m)-\YY\XX\Bigr)\notag\\
     =&\,\XX\CCO{1-2\xc+\frac{a}{\zc}\yc(\cc-1)}
        -\YY\frac{a}{\zc}(\xc-m)\notag\\
     &\,-\XX^2\CCO{1+\frac{a}{\zc^2}\yc(\cc-1)}
        +\XX\YY\CCO{-\frac{a}{\zc}+\frac{a}{\zc^2}(\xc-m)}\notag\\
     &\,+\frac{a\XX^2\YY}{\zc^2}
        +\frac{a\XX^2}{\zc^2(\XX+\zc)}
                \Bigl(\XX\yc(\cc-1)-\YY(\xc-m+\XX)\Bigr)\notag\\
     =&\,\XX\CCO{1-2\xc-\frac{a\yc k_1}{\zc^2}}
        -\YY a\cc
     -\XX^2\CCO{1-\frac{a\yc k_1}{\zc^3}}
        -\XX\YY\frac{ak_1}{\zc^2}\notag\\
     % &\,-\frac{a\yc k_1\XX^3}{\zc^3(\XX+\zc)}
     %    +\XX^2\YY\CCO{
     %    \frac{a}{\zc^2}
     %    -\frac{a}{\zc^2(\XX+\zc)}
     %            \Bigl(\xc-m+\XX\Bigr)}\\
     &\,-\XX^3\frac{a\yc k_1}{\zc^3(\XX+\zc)}
        +\XX^2\YY\frac{a}{\zc^2}\CCO{1-\frac{\xc-m+\XX}{\XX+\zc}}\notag\\
    =&\,\XX\CCO{1-2\xc-\frac{a\yc k_1}{\zc^2}}
        -\YY a\cc
     -\XX^2\CCO{1-\frac{a\yc k_1}{\zc^3}}
        -\XX\YY\frac{ak_1}{\zc^2}
        \label{eq:v1}\\
     &\,-\XX^3\frac{a\yc k_1}{\zc^3(\XX+\zc)}
        +\XX^2\YY\frac{ak_1}{\zc^2(\XX+\zc)}.\notag
\end{align}

Since $\yc=\xc+k_2-m$, we have also
\begin{align}
  v_2 =&\,b(\YY+\yc)\CCO{1-\frac{\YY+\yc}{k_2+x-m}}
      =b(\YY+\yc)\CCO{ 1-\frac{\YY+\yc}{\XX+\yc} }\notag\\
      =&\,b(\XX-\YY)\frac{\YY+\yc}{\XX+\yc} \notag\\%\label{eq:YY}\\
     % =&\,b(\XX-\YY)(\YY+\yc)\CCO{\frac{1}{\yc}-\frac{\XX}{{\yc}^2}
     %             + \frac{\XX^2}{ {\yc}^2(\XX+\yc) }}\\
     % =&\,b(\XX-\YY)\CCO{1-(\XX-\YY)\frac{1}{\yc}-\frac{\XX\YY}{{\yc}^2}
     %                 +\frac{\XX^2(\YY+\yc)}{ {\yc}^2(\XX+\yc) }}\\
     % =&\,b(\XX-\YY)\CCO{1-(\XX-\YY)\frac{1}{\yc}
     %                 +(\XX-\YY)\frac{\XX}{\yc(\XX+\yc)}}\\
     =&\,b(\XX-\YY)\CCO{1-(\XX-\YY)\frac{1}{\XX+\yc}}\notag\\
     % =&\,b(\XX-\YY)-\frac{b}{\yc}(\XX-\YY)^2\CCO{ 1-\frac{\XX}{{\yc}}
     %            + \frac{\XX^2}{ {\yc}(\XX+\yc) }}\\
    =&\,b(\XX-\YY)-\frac{b}{\yc}(\XX-\YY)^2\CCO{ 1-\frac{\XX}{\XX+\yc} }.\label{eq:v2}
\end{align}
This shows in particular that the linear part of $\vectorfield$ is
never zero.
Thus the only non-hyperbolic cases are the nilpotent case and the case
when $\Ec$ is a center for the linear part of $\vectorfield$.
   Let us now investigate these cases:
   \subsubsection*{$\mbox{c}_1$. Nilpotent case}

         This is when $p=0=s$. From the discussion at the beginning of Case b, it is clear that this case is nonempty.

         In this case, the Jacobian matrix $\jcb{\xc,\yc}$ is
\begin{equation*}
		\jcb{\xc,\yc}=%\vphi'(0,0)=
		\begin{pmatrix}
			b & -b\\
			b  & -b
		\end{pmatrix}.
	\end{equation*}
With the preceding notations, we thus have
\begin{align*}
       v_1=&\,b(\XX-\YY)
-\XX^2\CCO{1-\frac{a\yc k_1}{\zc^3}}
        -\XX\YY\frac{ak_1}{\zc^2}
     -\XX^3\frac{a\yc k_1}{\zc^3(\XX+\zc)}
        +\XX^2\YY\frac{ak_1}{\zc^2(\XX+\zc)}.
\end{align*}
The change of variables
$$\KX=\XX,\quad \KY=\YY-\XX$$
yields
\begin{align*}
        v_1=&\,-\KY b
         -\KX^2\CCO{1-\frac{a\yc k_1}{\zc^3}}
        -\KX(\KX+\KY)\frac{ak_1}{\zc^2}
     -\KX^3\frac{a\yc k_1}{\zc^3(\KX+\zc)}
        +\KX^2(\KX+\KY)\frac{ak_1}{\zc^2(\KX+\zc)}\\
          =&\,-\KY b
         -\KX^2\CCO{1-\frac{a\yc k_1}{\zc^3}+\frac{ak_1}{\zc^2}} %%%
         -\KX\KY \frac{ak_1}{\zc^2}\\
     &\, +\KX^3\frac{a k_1}{\zc^2(\KX+\zc)}\CCO{-\frac{\yc}{\zc}+1}
       +\KX^2\KY  \frac{ak_1^2}{\zc^2(\KX+\zc)},\\
          % =&\,-\KY b
     %     -\KX^2\CCO{1-\frac{ak_1(k_2-k_1)}{\zc^3}} %%%
     %     -\KX\KY \frac{ak_1}{\zc^2}\\
     % &\, +\KX^3\frac{a k_1}{\zc^2(\KX+\zc)}\CCO{-\frac{\yc}{\zc}+k_1}
     %   +\KX^2\KY  \frac{ak_1^2}{\zc^2(\KX+\zc)}\\
  v_2=&\,-\KY b -\KY^2\frac{b}{\yc}\CCO{ 1-\frac{\KX}{\KX+\yc} }.
\end{align*}

The coordinates of $\vectorfield$ are, in the basis
$(\frac{\partial}{\partial \KX},\frac{\partial}{\partial \KY})$,
\begin{align*}
  \dot{\KX}=&\,\dot{\XX}=v_1,\\
  \dot{\KY}=&\,\dot{\YY}-\dot{\XX}=v_2-v_1\\
           = &\,-\KY b
        -\KY^2\frac{b}{\yc}\CCO{ 1-\frac{\KX}{\KX+\yc} }
  +\KY b
         +\KX^2\CCO{1-\frac{a\yc k_1}{\zc^3}+\frac{ak_1}{\zc^2}}
         +\KX\KY \frac{ak_1}{\zc^2}\\
      &\,-\KX^3\frac{a k_1}{\zc^2(\KX+\zc)}\CCO{-\frac{\yc}{\zc}+1}
       -\KX^2\KY  \frac{ak_1^2}{\zc^2(\KX+\zc)}\\
  =&\,\KX^2\CCO{1-\frac{a\yc k_1}{\zc^3}+\frac{ak_1}{\zc^2}}
      +\KX\KY \frac{ak_1}{\zc^2}
      -\KY^2\frac{b}{\yc}
  +\KX^3\frac{a k_1}{\zc^2(\KX+\zc)}\CCO{\frac{\yc}{\zc}-1}\\
       &\,-\KX^2\KY  \frac{ak_1^2}{\zc^2(\KX+\zc)}
      +\KX \KY^2\frac{b}{\yc(\KX+\yc)}.
\end{align*}
We can thus write
	\begin{equation}\label{NILPOT}	
\begin{aligned}
  \dot{\KX}=&\,-\KY b +A(\KX,\KY),\\
  \dot{\KY}=&\,B(\KX,\KY),
\end{aligned}
\end{equation}
where $A$ and $B$ are analytic and their jacobian matrix at $(0,0)$ is
$0$.
In the neighborhood of $(0,0)$, the equation
$0=-\KY b +A(\KX,\KY)$ has the unique solution
$\KY=\Kf(\KX)$, where
%\begin{equation}\label{eq:\Kf(\KX)}
\begin{align*}
\Kf(\KX)=&\frac{
   -\KX^2\CCO{1-\frac{a\yc k_1}{\zc^3}+\frac{ak_1}{\zc^2}}
    -\KX^3\frac{a k_1}{\zc^2(\KX+\zc)}\CCO{\frac{\yc}{\zc}-k_1}
         }{
    b+\KX \frac{ak_1}{\zc^2} -\KX^2  \frac{ak_1^2}{\zc^2(\KX+\zc)}
          }\\
           =&-\frac{1}{b}(1-\frac{a\yc k_1}{\zc^3}+\frac{ak_1}{\zc^2}){u}^{2}+{\frac {a k1 \left( - k1\yc a
+{\zc}^{3}+a k1\zc+b{\zc}^{2}- \yc \zc b
              \right) }{{b}^{2}{\zc}^{5}}}{u}^{3}\\
         & +O \left( {u}^{4} \right).
\end{align*}
%\end{equation}
Let $\KFF(\KX)=B(\KX,\Kf(\KX))$.
Since $A(\KX,\Kf(\KX))=b\Kf(\KX)$ and $B(\KX,\KY)$ has the form
$$B(\KX,\KY)=\KY b-A(\KX,\KY)-\KY b
             -\KY^2\frac{b}{\yc}\CCO{ 1-\frac{\KX}{\KX+\yc} },$$
we have
\begin{align*}
  \KFF(\KX)=&-b\Kf(\KX)\,
             -\Kf^2(\KX)\frac{b}{\yc}\CCO{ 1-\frac{\KX}{\KX+\yc} }\\
                =&{\frac { \left( -k_1\,\yc\,a\zc\,b-{k_1}^{2}\yc
\,{a}^{2}+{\zc}^{3}ak_1+ak_1\,{\zc}^{2}b+{a}^{2}{{
k_1}}^{2}\zc \right) {u}^{3}}{{\zc}^{5}{b}^{2}}}\\
                +&{\frac { \left(
b{\zc}^{2}k_1\,\yc\,a-{\zc}^{5}b-b{\zc}^{3}a{k_1
} \right) {u}^{2}}{{\zc}^{5}{b}^{2}}}
+o(\KX^3).
\end{align*}
Let also $\KGG(\KX)=(\partial A/\partial \KX+\partial B/\partial
\KY)(\KX,\Kf(\KX))$. We have
\begin{align*}
  \partial A/\partial \KX
      =&\,-\KY\frac{ak_1}{\zc^2}
         -2\KX\CCO{1-\frac{a\yc k_1}{\zc^3}+\frac{ak_1}{\zc^2}}
         +2\KX\KY\frac{ak_1}{\zc^2(\KX+\zc)}\\
        &\, +3\KX^2\frac{a k_1}{\zc^2(\KX+\zc)}\CCO{-\frac{\yc}{\zc}+1}
        -\KX^3\frac{a k_1}{\zc^2(\KX+\zc)^2}\CCO{-\frac{\yc}{\zc}+1}
           -\KX^2\KY\frac{ak_1}{\zc^2(\KX+\zc)^2},\\
%\frac{1}{(\KX+\zc)^2}\frac{a\yc k_1}{\zc^3},\\
\partial B/\partial \KY
      =&\,-2\KY\frac{b}{\yc}\CCO{ 1-\frac{\KX}{\KX+\yc} }
      +\KX\frac{ak_1}{\zc^2}
      -\KX^2  \frac{ak_1}{\zc^2}.
\end{align*}
Replacing $\KY$ by $\Kf(\KX)$ yields
\begin{align*}
  \KGG(\KX)=&\,\KX\left\lgroup
       -2\CCO{1-\frac{a\yc k_1}{\zc^3}+\frac{ak_1}{\zc^2}}
               +\frac{ak_1}{\zc^2}
                    \right\rgroup\\%+o(\KX).
   &\,+\KX^2\left\lgroup
\frac{1}{b}\CCO{1-\frac{a\yc k_1}{\zc^3}+\frac{ak_1}{\zc^2}}\frac{ak_1}{\zc^2}
+3\frac{a k_1}{\zc^3}\CCO{-\frac{\yc}{\zc}+k_1}
      \right\rgroup+o(\KX^2).
\end{align*}
% \begin{align*}
%   (\partial A/\partial \KX)(\KX,\Kf(\KX))
%       =&\,-2\KX\CCO{1-\frac{a\yc k_1}{\zc^3}+\frac{ak_1}{\zc^2}}+o(\KX),\\
% (\partial B/\partial \KY)(\KX,\Kf(\KX))
%       =&\,\KX\frac{ak_1}{\zc^2}+o(\KX).
% \end{align*}
\textbf{Case 1:} If $1-\frac{a\yc k_1}{\zc^3}+\frac{ak_1}{\zc^2}\neq 0$, then
\begin{align*}
  \KFF(\KX)=&u^2(1-\frac{a\yc k_1}{\zc^3}+\frac{ak_1}{\zc^2})+o(\KX^2),\\
\intertext{and}
  \KGG(\KX)=&\,\KX\left\lgroup
       -2\CCO{1-\frac{a\yc k_1}{\zc^3}+\frac{ak_1}{\zc^2}}
               +\frac{ak_1}{\zc^2}
                    \right\rgroup\\%+o(\KX).
   &\,+\KX^2\left\lgroup
\frac{1}{b}\CCO{1-\frac{a\yc k_1}{\zc^3}+\frac{ak_1}{\zc^2}}\frac{ak_1}{\zc^2}
+3\frac{a k_1}{\zc^3}\CCO{-\frac{\yc}{\zc}+k_1}
      \right\rgroup+o(\KX^2).
\end{align*}
We can now  apply \cite[Theorem 3.5]{dumortier-llibre-artes} to system \eqref{NILPOT}.
Since the coefficient of $\KX^2$
in $\KFF(\KX)$ is nonzero,
we deduce from Part (4)-(i1) of \cite[Theorem 3.5]{dumortier-llibre-artes}:

%%%% LEM : NILPOTENT = CUSP
\begin{lemma}%\label{lem:cusp}
If $\Ec$ is a
nilpotent equilibrium of \eqref{eq:simple} in the positive quadrant
 $]0,+\infty[\times ]0,+\infty[$, and if $1-\frac{a\yc k_1}{\zc^3}+\frac{ak_1}{\zc^2}\neq 0$, then $\Ec$ is a cusp, that is,
its phase portrait consists of two hyperbolic sectors and two
separatrices. In this case, the index of $\Ec$ is 0.
\end{lemma}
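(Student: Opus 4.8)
The plan is to invoke the classification of analytic nilpotent singularities in \cite[Theorem 3.5]{dumortier-llibre-artes}; all the reductions that feed into that theorem have already been performed above, so the proof amounts to checking the hypotheses and reading off the correct case. First I would recall that, after the change of variables $\KX=\XX$, $\KY=\YY-\XX$, the vector field is in the normal form \eqref{NILPOT}, i.e.\ $\dot{\KX}=-b\KY+A(\KX,\KY)$ and $\dot{\KY}=B(\KX,\KY)$ with $A,B$ analytic and with vanishing linear part at the origin. Because the interior equilibria of \eqref{eq:simple} are the finitely many common roots of the algebraic system of \Cref{lem:critiquesdansA}, the singularity is isolated, so the theorem applies.

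The theorem is governed by the reduced field $\KFF(\KX)=B(\KX,\Kf(\KX))$ and the divergence $\KGG(\KX)$ along the curve $\KY=\Kf(\KX)$ that solves $-b\KY+A(\KX,\KY)=0$; both have been computed above. The decisive quantity is the order $\alpha$ of $\KFF$ at $0$. From the displayed expansion, $\KFF(\KX)=\bigl(1-\tfrac{a\yc k_1}{\zc^3}+\tfrac{ak_1}{\zc^2}\bigr)\KX^2+o(\KX^2)$, and the hypothesis of the lemma is precisely that the bracketed coefficient is nonzero; hence $\alpha=2$, an even integer, with nonvanishing leading coefficient (the $o(\KX^2)$ tail cannot lower the order).

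I would then place this in the even-order branch of \cite[Theorem 3.5]{dumortier-llibre-artes}. For even $\alpha$ the singularity is either a cusp or a saddle-node, the dichotomy being decided by the order $\beta$ of $\KGG$ relative to $\alpha/2$. When $\alpha=2$ one has $\alpha/2-1=0$, while any nontrivial $\KGG$ has order $\beta\geq1$; the cusp sub-case (Part (4)-(i1)) is therefore forced, with no competing saddle-node possibility. Thus $\Ec$ is a cusp, meaning its punctured neighborhood splits into exactly two hyperbolic sectors bounded by two separatrices.

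It remains to compute the index, which follows at once from the Bendixson--Poincar\'e sector formula $I=1+\tfrac12(e-h)$, where $e$ and $h$ are the numbers of elliptic and hyperbolic sectors. For a cusp, $e=0$ and $h=2$, so $I=0$ (this also agrees with the index recorded in the theorem). The only real subtlety is bookkeeping: verifying that the hypothesis $1-\tfrac{a\yc k_1}{\zc^3}+\tfrac{ak_1}{\zc^2}\neq0$ is exactly the condition placing us in case (i1) rather than a degenerate branch, and confirming that for $\alpha=2$ the precise value of $\beta$ is immaterial.
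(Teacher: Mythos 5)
Your proposal is correct and follows essentially the same route as the paper: both reduce to the normal form \eqref{NILPOT}, use the expansions of $\KFF$ and $\KGG$ computed just before the lemma, observe that the hypothesis $1-\frac{a\yc k_1}{\zc^3}+\frac{ak_1}{\zc^2}\neq 0$ makes $\KFF$ of even order $2$, and invoke Part (4)-(i1) of \cite[Theorem 3.5]{dumortier-llibre-artes} to conclude that $\Ec$ is a cusp of index $0$. Your additional checks (isolatedness of the singularity, exclusion of the saddle-node branch because $\KGG$ has order at least $1$, and the sector formula $I=1+\frac{1}{2}(e-h)$ for the index) are details the paper leaves implicit, not a different argument.
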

%%%%
\textbf{Case 2:} if $1-\frac{a\yc k_1}{\zc^3}+\frac{ak_1}{\zc^2} = 0$, then
\begin{align*}
\Kf(\KX)=&{\frac {a k1 \left( - k1\yc a
+{\zc}^{3}+a k1\zc+b{\zc}^{2}- \yc \zc b
 \right) }{{b}^{2}{\zc}^{5}}}{u}^{3}+O \left( {u}^{4} \right)   \\
 =&-{\frac {1}{b\zc}}u^{3}+O \left( {u}^{3} \right),\\
  \KFF(\KX)=&\frac{1}{\zc} u^3+o(\KX^3),\\
\intertext{and}
  \KGG(\KX)=&\KX\left\lgroup
       \frac{ak_1}{\zc^2}
                    \right\rgroup
   \,+\KX^2\left\lgroup
3\frac{a k_1}{\zc^3}\CCO{-\frac{\yc}{\zc}+k_1}
      \right\rgroup+o(\KX^2).
\end{align*}
Again, we apply  \cite[Theorem 3.5]{dumortier-llibre-artes} to System \eqref{NILPOT}.
Since the coefficient of $\KX^3$
in $\KFF(\KX)$ is positive,
we deduce from Part (4)-(ii) of \cite[Theorem 3.5]{dumortier-llibre-artes}:

%%%% LEM : NILPOTENT = SADDLE
\begin{lemma}%\label{lem:cusp}
If $\Ec$ is a
nilpotent equilibrium of \eqref{eq:simple} in the positive quadrant
 $]0,+\infty[\times ]0,+\infty[$, and if $1-\frac{a\yc k_1}{\zc^3}+\frac{ak_1}{\zc^2} = 0$, then $\Ec$ is a saddle point. In this case, the index of $\Ec$ is -1.
\end{lemma}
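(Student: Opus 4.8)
The plan is to determine the type of $\Ec$ by feeding the nilpotent normal form \eqref{NILPOT} into the Andreev-type classification of \cite[Theorem 3.5]{dumortier-llibre-artes}, exactly as was done in Case~1 above. The inputs to that classification are the branch $\KY=\Kf(\KX)$ of $-b\KY+A(\KX,\KY)=0$ through the origin, the reduced function $\KFF(\KX)=B(\KX,\Kf(\KX))$, and the trace function $\KGG(\KX)=(\partial A/\partial\KX+\partial B/\partial\KY)(\KX,\Kf(\KX))$. Under the standing hypothesis $1-\frac{a\yc k_1}{\zc^3}+\frac{ak_1}{\zc^2}=0$ these three functions have already been computed, so the remaining work is to identify the correct branch of the theorem and to transport its conclusion back to \eqref{eq:simple}.

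First I would extract the order and the sign of the leading term of $\KFF$. The Case~2 computation gives $\KFF(\KX)=\frac{1}{\zc}\KX^3+o(\KX^3)$, so the leading exponent is $3$, which is odd, and the leading coefficient is $1/\zc$. Because every equilibrium in the open quadrant lies in $\invrg$ (\Cref{lem:critiquesdansA}), we have $\xc\geq m$, hence $\zc=\xc+k_1-m\geq k_1>0$ and $1/\zc>0$. This is the case of an odd leading exponent with a strictly positive coefficient, for which Part (4)-(ii) of \cite[Theorem 3.5]{dumortier-llibre-artes} designates the origin of \eqref{NILPOT} as a topological saddle; in this regime the order of $\KGG$ is immaterial and need not be examined. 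Undoing the linear change $\KX=\XX$, $\KY=\YY-\XX$ and the translation $\XX=x-\xc$, $\YY=y-\yc$, both of which preserve the local phase portrait, then shows that $\Ec$ itself is a saddle of \eqref{eq:simple}. Finally, a topological saddle has Poincar\'e index $-1$, which yields the index assertion.

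I expect the only real obstacle to be the careful matching of \eqref{NILPOT} to the precise normal form demanded by \cite[Theorem 3.5]{dumortier-llibre-artes}: one must check that $p=s=0$ indeed makes the linear part nilpotent with the nonzero off-diagonal entry $-b$, that $A$ and $B$ start at second order, and that $\Kf$ is the correct branch; above all one must handle the scaling and sign attached to this off-diagonal entry consistently when reading off the sub-case, since in the odd-exponent nilpotent situation it is exactly the parity and sign of the leading coefficient of $\KFF$ that separate a saddle from a center, focus, or node. Once this bookkeeping is settled and $\zc>0$ is confirmed, the lemma follows by inspection, with no further computation required.
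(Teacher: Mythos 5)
Your proposal is, step for step, the argument the paper itself gives: the same reduction of \eqref{NILPOT} to the triple $\Kf$, $\KFF$, $\KGG$, the same leading term $\KFF(\KX)=\frac{1}{\zc}\KX^3+o(\KX^3)$, and the same appeal to Part (4)-(ii) of \cite[Theorem 3.5]{dumortier-llibre-artes}. The problem is that the ``bookkeeping'' step you explicitly defer --- handling the sign of the off-diagonal entry $-b$ --- is precisely where the argument fails, and it is not a formality: it reverses the conclusion. Theorem 3.5 is stated for the normal form $\dot{x}=y+A(x,y)$, $\dot{y}=B(x,y)$, with coefficient $+1$ on $y$, whereas \eqref{NILPOT} reads $\dot{\KX}=-b\KY+A(\KX,\KY)$ with $b>0$. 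Any change of coordinates bringing the linear part $\begin{pmatrix}0&-b\\0&0\end{pmatrix}$ to $\begin{pmatrix}0&1\\0&0\end{pmatrix}$, for instance $x=\KX$, $y=-b\KY$, leaves the trace function unchanged, $\hat{G}=\KGG$, but replaces the reduced function by $\hat{F}(x)=-b\,\KFF(x)=-\frac{b}{\zc}\,x^3+o(x^3)$. So the theorem must be applied with $m=3$ odd and leading coefficient $-b/\zc<0$: this is Part (4)-(iii), not Part (4)-(ii), and a saddle is excluded. Moreover, in that regime $\KGG$ is \emph{not} immaterial: with $\KGG(\KX)=\frac{ak_1}{\zc^2}\KX+O(\KX^2)$ one has $n=1$, $m=2n+1$, and the singular point is a center or a focus if $\bigl(\frac{ak_1}{\zc^2}\bigr)^2-\frac{8b}{\zc}<0$, and the union of one hyperbolic and one elliptic sector otherwise; in every one of these cases the index is $+1$, not $-1$.

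A computation independent of the classification confirms the sign problem: the leading truncation of \eqref{NILPOT} in Case 2 is $\dot{\KX}=-b\KY$, $\dot{\KY}=\frac{1}{\zc}\KX^3$, which possesses the positive definite first integral $H(\KX,\KY)=\frac{b}{2}\KY^2+\frac{1}{4\zc}\KX^4$; its orbits are closed ovals, i.e.\ the origin is a center for the truncation --- the topological opposite of a saddle. Given the $-b$ in the linear part, a saddle would require the cubic coefficient of $\KFF$ to be \emph{negative}, which is exactly the sign flip that the normalization produces relative to the theorem's convention. Since this flip is also absent from the paper's own proof, your proposal does faithfully reproduce the paper's reasoning, but both founder on the same step: once the normalization is carried out, the stated conclusion (saddle, index $-1$) does not follow, and the correct conclusion under $1-\frac{a\yc k_1}{\zc^3}+\frac{ak_1}{\zc^2}=0$ is a monodromic or elliptic-sector singularity of index $+1$, the precise type being decided by the sign of $\bigl(\frac{ak_1}{\zc^2}\bigr)^2-\frac{8b}{\zc}$.
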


%\bigskip
           % CAS CENTRE POUR LE SYSTÈME LINÉARISÉ
           \subsubsection*{$\mbox{c}_2$. The case of a center of the linearized
          vector field}
%$\bullet$ {\em Hopf bifurcation and center of the linearized
   %      vector field}
The point $\Ec$ is a {center} of the linear part of $\vectorfield$ if
the Jacobian $\jcb{\xc,\yc}$ has
        purely imaginary eigenvalues      	
	$\pm i\sqrt{p}$, that is, when $p>0$ and $s=0$.
	Again, this case is nonempty.
Let us denote
\begin{equation}
  \label{eq:b0}
  b_0=1-2\xc+\frac{a\yc k_1}{\zc^2}.
\end{equation}
With the notations of \eqref{eq:zc-cc}, we have $p>0$ and $s=0$ if,
and only if,
\begin{equation}
           \label{eq:linearlycenter}
           b=b_0<a\cc.
         \end{equation}
Note that $\xc$, $\yc$, as well as
$b_0$, $a$, $\cc$, and the sign of $p$
do not depend on the parameter $b$, and that $s=b-b_0$.
Let us fix all parameters except $b$, and assume that
$\discrimC<0$, that is,
the eigenvalues of $\jcb{\xc,\yc}$ %the roots of $\charac$
are
$$\frac{-s\pm i\sqrt{4p-s^2}}{2}.$$
These eigenvalues
cross the imaginary axis at
speed $-1/2$ when $b$ passes through the value
$b_0$.
Let us denote $\Kc=a\cc$.
By \eqref{eq:v1} and \eqref{eq:v2}, we have
\begin{align*}
  v_1=&\,\XX b_0 %\CCO{1-2\xc-\frac{a\yc k_1}{\zc}}
        -\YY \Kc
     -\XX^2\CCO{1-\frac{a\yc k_1}{\zc^3}}
        -\XX\YY\frac{ak_1}{\zc^2}
     -\XX^3\frac{a\yc k_1}{\zc^3(\XX+\zc)}
        +\XX^2\YY\frac{ak_1}{\zc^2},\\
  v_2=&\,(\XX -\YY) b-\frac{b}{\yc}(\XX-\YY)^2\CCO{ 1-\frac{\XX}{\XX+\yc} }.
\end{align*}
Let us denote by $(\ii,\jj)$ the standard basis of $\R^2$.
In this basis,
the matrix of the linear part $\varphi$ of $(\XX,\YY)\mapsto(v_1,v_2)$ is
$$
A(b)=\begin{pmatrix}
b_0&-\Kc\\
b&-b
\end{pmatrix}.
$$
Let
%\begin{equation*}
\begin{alignat*}{3}
\dzo=&\sqrt{\det A(b_0)}=\sqrt{b_0(\Kc-b_0)},
      &\quad   &\DZO=\frac{\Kc-b_0}{\dzo}=\sqrt{\frac{\Kc-b_0}{b_0}},\\
  \uu=&\ii+\jj,
      &\quad          &\vv=\frac{1}{\dzo}\varphi(\uu)
 =-\DZO\ii.
\end{alignat*}
%\end{equation*}
The matrix of $\varphi$ in the basis $(\uu,\vv)$ is
$$
\tilde{A}(b)=\begin{pmatrix}
0&-\frac{b}{b_0}\dzo\\
\dzo&b_0-b
\end{pmatrix}.
$$
%We make the change of variables
The coordinates $(\KX,\KY)$ in the basis $(\uu,\vv)$ satisfy
$\KX=\YY,\quad \KY=\frac{1}{\DZO}(\YY-\XX)$,
$\XX=\KX-\KY\DZO$, $\YY=\KX$.
The coordinates of $\vectorfield$ in the basis
$(\frac{\partial}{\partial \KX}, \frac{\partial}{\partial \KY})$ are
\begin{align*}
  \dot{\KX}&\,=v_2=-b\DZO\KY
       +\frac{b}{\yc}\DZO^2\KY^2
          \CCO{ 1-\frac{\KX-\KY\DZO}{\KX-\KY\DZO+\yc} },\\
  \dot{\KY}=&\,\frac{1}{\DZO}(v_2-v_1)\\
      = &\, -b\KY
       +\frac{b}{\yc}\DZO\KY^2
          \CCO{ 1-\frac{\KX-\KY\DZO}{\KX-\KY\DZO+\yc} }\\
   %%%%%%%%%%%
   & \, -\frac{1}{\DZO}\Biggl\lgroup
         (\KX-\KY\DZO) b_0 -\KX \Kc
     -(\KX-\KY\DZO)^2\CCO{1-\frac{a\yc k_1}{\zc^3}}
        -\KX(\KX-\KY\DZO)\frac{ak_1}{\zc^2}\\
    &\, \phantom{-\frac{1}{\DZO}\Biggl\lgroup}
        -(\KX-\KY\DZO)^3\frac{a\yc k_1}{\zc^3(\KX-\KY\DZO+\zc)}
        +\KX(\KX-\KY\DZO)^2\frac{ak_1}{\zc^2}\Biggr\rgroup.\\
\intertext{In particular, for $b=b_0$,}
  \dot{\KX}=&\,-\dzo\KY
       +\frac{\Kc-b_0}{\yc}\KY^2
          \CCO{ 1-\frac{\KX-\KY\DZO}{\KX-\KY\DZO+\yc} },\\
 \dot{\KY} =&\, \dzo\KX
          +\frac{\Kc-b_0}{\yc}\KY^2
          \CCO{ 1-\frac{\KX-\KY\DZO}{\KX-\KY\DZO+\yc} }\\
    &\, +\frac{1}{\DZO}\Biggl\lgroup
      +(\KX-\KY\DZO)^2\CCO{1-\frac{a\yc k_1}{\zc^3}}
        +\KX(\KX-\KY\DZO)\frac{ak_1}{\zc^2}\\
    &\, \phantom{-\frac{1}{\DZO}\Biggl\lgroup}
        +(\KX-\KY\DZO)^3\frac{a\yc k_1}{\zc^3(\KX-\KY\DZO+\zc)}
        -\KX(\KX-\KY\DZO)^2\frac{ak_1}{\zc^2}\Biggr\rgroup.
\end{align*}

	%%%%%%%%%%%%%%%%%%%%%%%%%%%%%%%%%
	\subsection{Existence of a globally asymptotically stable equilibrium point}
	When $m=0$, in the case \eqref{item:m0-nosol} of
              \Cref{prop:m=0nbdepoints},
	we have seen that \eqref{eq:simple} has
	no cycle, %in $invrg$,
	because the compact set delimited by
	a cycle would contain a critical point,
	see \cite[Theorem V.3.8]{bhatia-szego}.
	As the compact set $\invrg$ is invariant and contains
	all equilibrium points of the
        open quadrant $]0,+\infty[\times ]0,+\infty[$,
	all trajectories starting in the quadrant $\R_+\times\R_+$
	converge to $E_1$ or $E_2$ ($E_0$ is excluded because it is an unstable
	node).
	On the $x$ axis,
        we have $\dot{y}=0$ and $x$ satisfies the logistic equation
	$\dot{x}=x(1-x)$, thus, for $x(0)>0$, $x(t)$ converges to 1, i.e.,
	$(x(t),y(t))$ converges to $E_1$.
	On the other hand,
	for $0<y<k_2+x$, we have $\dot{y}>0$, thus, if $y(0)>0$,
        $(x(t),y(t))$ cannot
	converge to $E_1$, it converges necessarily to $E_2$.

	%%%%%%%%%%%% THEO FONCTION DE LYAPUNOV %%%%%%%
	\begin{theorem}\label{theo:globstable}
		A sufficient condition for the existence of
		a globally asymptotically stable
		equilibrium point $\Ec=(\xc,\yc)$ in the open quadrant
		$]0,+\infty[\times ]0,+\infty[$
                (equivalently, in the interior of $\invrg$)
		is that
		\begin{equation}
			\label{eq:2m+k}
			\big\lgroup 2m+k_1\geq 1\big\rgroup \text{ and }
			\big\lgroup (m>0)\text{ or }
			\bigl(4ak_2\leq (1-k_1-a)^2+4k_1\bigr)\big\rgroup.
		\end{equation}
	\end{theorem}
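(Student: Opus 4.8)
The plan is to base the whole argument on the monotonicity of the prey nullcline. For $x>m$ I would write the first equation of \eqref{eq:simple} in the factored form
\[
\dot x=\frac{a(x-m)}{k_1+x-m}\,\bigl(\phi(x)-y\bigr),\qquad
\phi(x):=\frac{x(1-x)(k_1+x-m)}{a(x-m)}.
\]
By \Cref{lem:critiquesdansA} the interior equilibria are exactly the points where the graph $y=\phi(x)$ meets the strictly increasing predator nullcline $y=k_2+x-m$. At such a point $\Ec$ one has $\phi(\xc)=\yc$, so in $\partial v_1/\partial x=g'(x)(\phi(x)-y)+g(x)\phi'(x)$ (with $g(x)=\tfrac{a(x-m)}{k_1+x-m}$) the first summand vanishes and the $(1,1)$ entry of $\jcb{\xc,\yc}$, which I denote $J_{11}$, equals $g(\xc)\phi'(\xc)$. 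Thus $\operatorname{sign}J_{11}=\operatorname{sign}\phi'(\xc)$, and stability is controlled entirely by the slope of the prey nullcline.

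The crux is therefore the sign of $\phi'$. Putting $X=x-m>0$, a direct computation gives $\operatorname{sign}\phi'(x)=\operatorname{sign}\tilde Q(X)$ with
\[
\tilde Q(X)=-2X^3+(1-2m-k_1)X^2-k_1m(1-m).
\]
Under the hypothesis $2m+k_1\ge 1$ one has $1-2m-k_1\le 0$, while $-2X^3<0$ for $X>0$ and $-k_1m(1-m)\le 0$ (since $m\in[0,1)$, $k_1>0$); hence $\tilde Q(X)<0$ for every $X>0$, i.e. $\phi'(x)<0$ on all of $(m,1)$. Consequently $\phi$ is strictly decreasing there and meets the increasing line $y=k_2+x-m$ at most once; existence of a crossing is provided by the second clause of \eqref{eq:2m+k} (automatic when $m>0$, see the discussion before \Cref{theo:nbdepoints}, and for $m=0$ the inequality $4ak_2\le(1-k_1-a)^2+4k_1$ is $\discrimQ\ge 0$, yielding the crossing via \Cref{prop:m=0nbdepoints}). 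So there is a unique interior equilibrium $\Ec$, and $\phi'(\xc)<0$ gives $J_{11}<0$; together with $J_{22}=-b$ this makes $\trace\jcb{\xc,\yc}=J_{11}-b<0$ and $\det\jcb{\xc,\yc}=b\bigl(-J_{11}+\tfrac{a(\xc-m)}{k_1+\xc-m}\bigr)>0$, so $\Ec$ is locally asymptotically stable.

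To pass from local to global stability I would exclude periodic orbits with the Dulac function
\[
B(x,y)=\frac{k_1+x-m}{(x-m)\,y}>0\qquad\text{on }(m,\infty)\times(0,\infty).
\]
Then $Bv_1=\frac{a}{y}(\phi(x)-y)$ and $Bv_2=\frac{b(k_1+x-m)}{x-m}\bigl(1-\frac{y}{k_2+x-m}\bigr)$, so
\[
\frac{\partial(Bv_1)}{\partial x}+\frac{\partial(Bv_2)}{\partial y}
=\frac{a\,\phi'(x)}{y}-\frac{b(k_1+x-m)}{(x-m)(k_2+x-m)}<0
\]
on the simply connected strip $(m,1)\times(0,\infty)$, the first term being negative by the key step. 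Any periodic orbit lies in this strip: by \Cref{theo:Ainvariant} it sits in the attracting compact set, and at its leftmost and rightmost points $\dot x=0$ forces $x>m$ (since $\dot x=m(1-m)>0$ on $\{x=m\}$) and $x<1$ (since $\dot x<0$ on $\{x=1,\ y>0\}$). Bendixson--Dulac then rules out periodic orbits.

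Finally I would combine these facts through Poincaré--Bendixson. Every trajectory issued from the open quadrant enters the attracting compact set of \Cref{theo:Ainvariant}, which contains no periodic orbit and the single interior sink $\Ec$. When $m>0$ this set is $\invrg\subset\{x\ge m>0,\ y\ge k_2>0\}$ and carries no equilibrium other than $\Ec$, so every $\omega$-limit set equals $\Ec$. When $m=0$ the set also meets $E_2=(0,k_2)$; but the existence of $\Ec$ forces $ak_2<k_1$, whence $E_2$ is a saddle with stable manifold the $y$-axis (\Cref{prop:trivial}), and its unstable separatrix must limit onto $\Ec$ (the only interior equilibrium, no cycle being available), so no interior trajectory can accumulate at $E_2$ and no homoclinic loop can form. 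Hence $\Ec$ attracts the whole open quadrant. The one genuinely hard point is the cubic sign estimate $\tilde Q(X)<0$: it is the single ingredient that at once gives uniqueness of $\Ec$, its local stability, and---through the Dulac function---the absence of limit cycles; everything else is assembly of results already in \Cref{sec:determinist}.
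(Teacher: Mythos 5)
Your proof is correct in substance, but it takes a genuinely different route from the paper's. The paper proves the theorem in one stroke with an explicit Lyapunov function,
\begin{equation*}
V(x,y)=\int_\xc^x \frac{u-\xc}{(k_2+u-m)\rho(u)}\,du+\frac{1}{b}\int_\yc^y \frac{v-\yc}{v}\,dv,
\qquad \rho(u)=\frac{a(u-m)}{k_1+u-m},
\end{equation*}
whose orbital derivative collapses to $\frac{1}{k_2+x-m}\bigl(\frac{x-\xc}{a}(\gfun(x)-\gfun(\xc))-(y-\yc)^2\bigr)$, where $\gfun=a\phi$ is exactly your nullcline function; negativity of $\dot V$ is precisely the monotonicity of $\gfun$, i.e.\ your cubic estimate $\tilde Q(X)<0$. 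So the computational heart of the two arguments is identical, but you deploy it three times instead of once: in the nullcline slope (giving $J_{11}<0$, hence uniqueness and local stability), in the divergence of the Dulac function $B=(k_1+x-m)/((x-m)y)$ (different from, and here better adapted than, the paper's Dulac function $D=x+k_1-m$ of \Cref{lem:Dulac0}, which needs stronger hypotheses), and then through Poincar\'e--Bendixson together with the boundary analysis at $E_2$ when $m=0$. The Lyapunov route buys economy: attraction, stability and uniqueness come out simultaneously, with no planar-specific machinery and no case distinction at the boundary equilibria. Your route buys modularity and a by-product: under $2m+k_1\geq 1$ there are no periodic orbits at all in $\{x>m,\ y>0\}$, a no-cycle criterion not contained in \Cref{lem:Dulac0}. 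One caveat, which you share with (rather than add to) the paper's own proof: for $m=0$, both arguments deduce existence of $\Ec$ from the discriminant condition $\discrimQ\geq 0$ alone, via \Cref{prop:m=0nbdepoints}; but since $2m+k_1\geq 1$ forces $\alpha_2=a+k_1-1>0$, that theorem yields a positive root only if in addition $\alpha_1=ak_2-k_1<0$, so strictly speaking the second clause of \eqref{eq:2m+k} does not by itself produce the crossing. Your later observation that ``the existence of $\Ec$ forces $ak_2<k_1$'' implicitly acknowledges this; everything you prove conditionally on existence (uniqueness, local stability, absence of cycles and of graphics, exclusion of $E_2$ as a limit point) is sound.
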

	%%%%%%%%%%
	\begin{proof}
	Let
	$\Ec=(\xc,\yc)\in\invrg$ be an equilibrium point in the interior of
	$\invrg$.
	Let us denote
	\begin{equation*}
		\roh(x)=\frac{a(x-m)}{k_1+x-m},
	\end{equation*}
	and let us set
	\begin{equation*}
		V(x,y)=\int_\xc^x \frac{u-\xc}{(k_2+u-m)\roh(u)}\dd u
		+\frac{1}{b}\int_\yc^y \frac{v-\yc}{v}\dd v.
	\end{equation*}
	Then, using \eqref{eq:xstar2} and \eqref{eq:ystar2}, we have
	\begin{align*}
		\dot{V}=&\,\frac{x-\xc}{(k_2+x-m)\roh(x)}\,\dot{x}
		+\frac{1}{b}\frac{y-\yc}{y}\,\dot{y}\\
		=&\,\frac{x-\xc}{k_2+x-m}
		\CCO{
			\frac{x(1-x)}{\roh(x)}-\frac{a(x-m)}{k_1+x-m}\frac{1}{\roh(x)}y
		}
		+\frac{1}{b}(y-\yc)b\CCO{1-\frac{y}{k_2+x-m}}\\
		=&\,\frac{x-\xc}{a(k_2+x-m)}\CCO{
			\frac{x(1-x)(k_1+x-m)}{x-m}-\yc
		}
		-\frac{(x-\xc)(y-\yc)}{k_2+x-m}\\
		&\,+(y-\yc)\CCO{
			\frac{\yc}{k_2+\xc-m}-\frac{y}{k_2+x-m}
		}\\
		=&\,\frac{x-\xc}{a(k_2+x-m)}\CCO{
			\frac{x(1-x)(k_1+x-m)}{x-m}-\frac{\xc(1-\xc)(k_1+\xc-m)}{\xc-m}
		}\\
		&\,-\frac{(x-\xc)(y-\yc)}{k_2+x-m}\\
		&\,+(y-\yc)\,{
			\frac{\yc(k_2+x-m)-y(k_2+\xc-m)}{(k_2+\xc-m)(k_2+x-m)}
		}.\\
\intertext{Let us denote $\gfun(x)=x(1-x)(k_1+x-m)/(x-m)$. Then }
		\dot{V}=&\,\frac{x-\xc}{a(k_2+x-m)}\CCO{\gfun(x)-\gfun(\xc)}
		-\frac{(x-\xc)(y-\yc)}{k_2+x-m}\\
		&\,+(y-\yc)\,
		\frac{(\yc-y)(k_2-m)+\yc x-y\xc}{(k_2+\xc-m)(k_2+x-m)}\\
		=&\,\frac{x-\xc}{a(k_2+x-m)}\CCO{\gfun(x)-\gfun(\xc)}
		-\frac{(x-\xc)(y-\yc)}{k_2+x-m}\\
		&\,+\frac{y-\yc}{\yc}\,\frac{(\yc-y)(\xc+k_2-m)+\yc(x-\xc)}{k_2+x-m}\\
		=&\,\frac{x-\xc}{a(k_2+x-m)}\CCO{\gfun(x)-\gfun(\xc)}
		+\frac{y-\yc}{\yc}\,\frac{(\yc-y)(\xc+k_2-m)}{k_2+x-m}\\
		=&\,\frac{1}{k_2+x-m}\CCO{
			\frac{x-\xc}{a}\CCO{\gfun(x)-\gfun(\xc)} - (y-\yc)^2
		}.
	\end{align*}
	For $x\geq m$, a sufficient condition for $\dot{V}$ to be
	negative when $(x,y)\not=(\xc,\yc)$ is that $\gfun$ be nonincreasing.
	Let us make the change of variable $X=x-m$.
	We have
	\begin{equation*}
		\gfun(x)=\frac{(X+m)(1-X-m)(X+k_1)}{X},
	\end{equation*}
	which leads to
	\begin{equation*}
		\gfun'(x)=\frac{-2X^3+(1-2m-k_1)X^2-k_1(m-m^2)}{X^2}.%=:\frac{h(X)}{X^2}.
	\end{equation*}
	% If $m>0$, we have $h(0)<0$ and
	% \begin{equation*}
	%   h'(X)=-6X\CCO{X-\frac{1-2m-k_1}{3}},
	% \end{equation*}
	Thus, if $2m+k_1\geq 1$, %\eqref{eq:2m+k} is satisfied,
	$\gfun'(X)$ remains negative for $X> 0$, i.e., for $x> m$.
	Thus, for $x> m$, under the assumption \eqref{eq:2m+k},
	$\dot{V}$ is negative.
	
	% When $m=0$, the function $g$ becomes
	% $g(x)=(1-x)(x+k_1)$, thus $g'(x)=-2x+1-k_1$ is negative for all $x>0$
	% if $k_1\geq 1$, which is again Condition \eqref{eq:2m+k}.

	We have seen that the first part of \eqref{eq:2m+k} implies
	that the equilibrium point $\Ec$, if it exists,
	is globally asymptotically stable.
	Note that Condition \eqref{eq:2m+k} is independent of the coordinates of $\Ec$,
	and the global stability implies that the equilibrium point $\Ec$, if
	it exists, is unique.
	
	The second part of \eqref{eq:2m+k} is a necessary and sufficient
	condition for the existence of such an equilibrium point.
	
	When $m>0$, we already know that there exists at least one
	equilibrium point in $\invrg$.
	Actually,
	Condition \eqref{eq:2m+k} implies that the coefficient $\alpha_2=a+k_1-1+2m$ of
	\eqref{eq:Rcoeffs} is positive.
	Thus, when $m>0$, \eqref{eq:2m+k} is a particular
	case of \eqref{cond:n=1} in \Cref{theo:nbdepoints}.

	When $m=0$, by \Cref{prop:m=0nbdepoints}-\eqref{item:m0-nosol},
	since $\alpha_2>0$,
	there exists an equilibrium point in the interior of $\invrg$
	if, and only if,
	\eqref{eq:discriminant} is satisfied.
	\end{proof}
	
	%%%%%%%%%%%%%%%%%%%%%%%%%%%%%%%%%
	\subsection{Cycles}	
	Let us investigate the existence of periodic orbits of
	\eqref{eq:simple}.
	By \Cref{theo:Ainvariant} such orbits can take place only
	in $\invrg$.
	\subsubsection{Refuge free case ($m=0$)}
	This case has been studied by M.A.~Aziz-Alaoui and M.~Daher-Okiye \cite{Daher-Aziz03bologn}, but we add some new results.
	%%%%%%%%%
	\begin{lemma}\label{lem:0equilib0cycle}
		In the cases \eqref{item:m0-nosol}
		and \eqref{item:m0-2sol}
		of \Cref{prop:m=0nbdepoints},
		that is, when \eqref{eq:simple} has 0 or 2 equilibrium points in the open quadrant $]0,+\infty[\times]0,+\infty[$,
		the system \eqref{eq:simple} has
		no limit cycle.
		On the other hand, in the case  \eqref{item:m0-1sol} of  Theorem  \cref{prop:m=0nbdepoints}, that is, when \eqref{eq:simple} has 1 equilibrium point in the open quadrant $]0,+\infty[\times]0,+\infty[$, if furthermore $s<0$ and $p>0$, the system \eqref{eq:simple} has at least one limit cycle.
	\end{lemma}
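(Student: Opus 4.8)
The plan is to treat the three assertions by combining the Poincar\'e index theorem (already invoked for \Cref{theo:poincare-hopf}) with the Poincar\'e--Bendixson theorem, exploiting that $\invrg$ is a compact attracting invariant set (\Cref{theo:Ainvariant}) which contains every interior equilibrium, so that any periodic orbit necessarily lies in $\invrg$.

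For the two non-existence statements I would start from the fact that the compact region bounded by a cycle must contain equilibria whose indices sum to $1$ (see \cite[Theorem V.3.8]{bhatia-szego}), and that a closed curve in the open quadrant cannot surround a point of the axes, so only interior equilibria can be enclosed. In case \eqref{item:m0-nosol} there are none, which already gives the conclusion (this is the argument stated just before \Cref{theo:globstable}). In case \eqref{item:m0-2sol}, \Cref{theo:poincare-hopf} says the two interior equilibria are one saddle (index $-1$) and one node or focus (index $+1$); since the total enclosed index must equal $+1$, any cycle would have to surround the node or focus alone and leave the saddle outside.

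The delicate point, and the one I expect to be the real obstacle, is to exclude such a cycle around the lone node or focus. Here I would try a Dulac function $B(x,y)=\dfrac{k_1+x}{xy}$, for which a direct computation gives, on the interior, $\operatorname{div}(B\vectorfield)=\dfrac{1-k_1-2x}{y}-\dfrac{b(k_1+x)}{x(k_2+x)}$; the second term is always negative and the first is negative as soon as $2x\ge 1-k_1$. If one can confine every periodic orbit to a region where this expression keeps a constant sign, then for the region $D$ it bounds one gets $\iint_{D}\operatorname{div}(B\vectorfield)\,\d x\,\d y\neq0$, contradicting invariance of the orbit. The confinement is the hard step: in case \eqref{item:m0-2sol} the divergence does change sign near the node or focus (it is close to $0$ when the two interior equilibria nearly coalesce), so a sign condition valid on all of $\invrg$ is not available. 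I would instead lean on the global phase portrait, using that $E_2=(0,k_2)$ is a stable node (\Cref{prop:trivial}) and that the stable manifold of the interior saddle acts as a separatrix, in order to pin down where the orbits of a putative cycle can travel; this is where the argument genuinely departs from pure index theory.

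For the existence statement in case \eqref{item:m0-1sol} with $s<0$ and $p>0$, I would argue by Poincar\'e--Bendixson. Fix an interior point other than the unique equilibrium $\Ec$; its forward orbit is bounded, eventually enters the attracting set $[0,1]\times[k_2-\epsilon,L]$ (case \eqref{item:Am0} of \Cref{theo:Ainvariant}), and satisfies $\liminf_{t}y(t)\ge k_2>0$, so its $\omega$-limit set $\omega$ is a nonempty compact connected invariant subset of the open quadrant. The set $\omega$ cannot meet the axes, for otherwise invariance and the dynamics on the axes would force $E_0=(0,0)$ into $\omega$, which is impossible since $E_0$ is a source; and $\omega$ cannot contain $\Ec$, since $s<0$ and $p>0$ make $\Ec$ a repeller, so orbits leave a fixed neighborhood of $\Ec$ once and for all. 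Hence $\omega$ contains no equilibrium, and Poincar\'e--Bendixson forces $\omega$ to be a periodic orbit; taking the initial point on a small circle around $\Ec$ on which $\vectorfield$ points strictly outward yields a trajectory whose $\omega$-limit is this periodic orbit, which is therefore a limit cycle. The only routine verification is that in each subcase of \eqref{item:m0-1sol} the boundary points $E_1$ and $E_2$ have their stable manifolds on the axes (\Cref{prop:trivial}), which is what forbids interior orbits from converging to them.
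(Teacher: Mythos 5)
Your treatment of case \eqref{item:m0-nosol} and of the existence claim in case \eqref{item:m0-1sol} is correct and is essentially the paper's own argument: in the first case a cycle would have to enclose an interior equilibrium (\cite[Theorem V.3.8]{bhatia-szego}) and there are none; in the third case the hypotheses $s<0$, $p>0$ make the unique interior equilibrium a repeller, forward orbits are eventually trapped in the compact attracting set of case \eqref{item:Am0} of \Cref{theo:Ainvariant}, the boundary equilibria cannot lie in the $\omega$-limit set of an interior orbit because their stable manifolds are contained in the axes (\Cref{prop:trivial}), and Poincar\'e--Bendixson then yields a periodic orbit. Your write-up of this part is in fact more detailed than the paper's two-line proof.

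The genuine gap is case \eqref{item:m0-2sol}, and you have located it correctly without closing it. Since a cycle has index $+1$, index theory rules out a cycle enclosing the saddle alone (index $-1$) or both interior points (total index $0$), but not a cycle enclosing only the node/focus; your Dulac function does not keep a constant sign on $\invrg$, and the separatrix picture you sketch is not a proof, as you yourself acknowledge (``the confinement is the hard step''). For comparison, the paper's proof of this case is purely index-theoretic: it asserts that a cycle would force $N-S=1$ with $N$ and $S$ counted \emph{over the whole quadrant}, contradicting $N-S=0$ from \Cref{theo:poincare-hopf}; this tacitly assumes that any cycle must enclose both interior equilibria, which is exactly the step you found yourself unable to justify. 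So your proposal is incomplete precisely where the paper's own argument is unjustified. Moreover, your doubt is substantive: the paper's Hopf-bifurcation analysis applies verbatim to the node/focus of case \eqref{item:m0-2sol}, since the conditions defining that case do not involve $b$, and one can check (e.g.\ $a=0.9$, $k_1=0.01$, $k_2=0.012$, giving interior equilibria at $\xc=0.01$ and $\xc=0.08$) that the trace of the Jacobian at the node/focus vanishes at a positive value of $b$. Unless the bifurcation is degenerate throughout this parameter region, it produces a small limit cycle surrounding the node/focus alone, so the confinement step you were seeking cannot be established in the stated generality.
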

	
	%%%%%%%%%
	\begin{proof}
	In the case \eqref{item:m0-nosol},
	the only equilibrium points of \eqref{eq:simple} in $\R_+\times\R_+$
	are the trivial points $E_0$, $E_1$, and $E_2$, on the axes.
	Thus \eqref{eq:simple} has
	no cycle, %in $invrg$,
	because the compact set delimited by
	a cycle would contain a critical point,
	see \cite[Theorem V.3.8]{bhatia-szego}.
	
	In the case \eqref{item:m0-2sol},
	if there was a cycle inside $\invrg$, we could apply
	the Poincar\'e-Hopf Index Theorem to the compact manifold whose
	boundary is delineated by this cycle
	(see \cite{ma-wang} for a version of this theorem when the vector field
	is tangent to the boundary).
	Denoting $N$ the number of nodes or
	foci and $S$ the number of saddles in the
	open quadrant $]0,+\infty[\times ]0,+\infty[ $, we would have
	$N-S=1$.
	But \Cref{theo:poincare-hopf} shows that $N-S=0$, a contradiction.
	
	In the case \eqref{item:m0-1sol}, if $s<0$ and $p>0$, the
        system \eqref{eq:simple} has an unstable equilibrium point.
        From \Cref{theo:Ainvariant} and Poincar\'e-Bendixson Theorem, there exists at least one limit cycle around this equilibrium.
	\end{proof}
	Note that the conditions of \Cref{lem:0equilib0cycle} do not involve the value of $b$.
	Using Bendixson-Dulac criterion, M.A.~Aziz-Alaoui and M.~Daher-Okiye obtain another criterion:
	\begin{lemma}\cite[Theorem 7]{Daher-Aziz03bologn}
	if $b+k_1\geq1$, then the system \eqref{eq:simple} has
		no limit cycle.
	\end{lemma}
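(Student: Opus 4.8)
The plan is to prove this by the Bendixson--Dulac criterion on the open quadrant $]0,+\infty[\times]0,+\infty[$, which is simply connected. By \Cref{theo:Ainvariant} together with the invariance of the axes, any limit cycle of \eqref{eq:simple} (with $m=0$) must lie entirely in this open quadrant, so it suffices to exhibit a $C^1$ function $\Dulac>0$ for which the divergence of $(\Dulac v_1,\Dulac v_2)$ keeps a constant strict sign there.

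The crux is to choose $\Dulac$ so that the Holling term disappears from the divergence. I would try
\begin{equation*}
  \Dulac(x,y)=\frac{k_1+x}{x\,y^2},
\end{equation*}
which is smooth and positive on the open quadrant. With $m=0$ one has $v_1=x(1-x)-\frac{axy}{k_1+x}$ and $v_2=by\bigl(1-\frac{y}{k_2+x}\bigr)$, hence
\begin{align*}
  \Dulac v_1&=\frac{(k_1+x)(1-x)}{y^2}-\frac{a}{y},\\
  \Dulac v_2&=\frac{b(k_1+x)}{x\,y}-\frac{b(k_1+x)}{x(k_2+x)}.
\end{align*}
The $a$-dependent term in $\Dulac v_1$ does not depend on $x$, and the last term of $\Dulac v_2$ does not depend on $y$, so both drop out upon differentiation, and I expect
\begin{equation*}
  \frac{\partial(\Dulac v_1)}{\partial x}+\frac{\partial(\Dulac v_2)}{\partial y}
   =\frac{1}{y^2}\CCO{1-2x-k_1-\frac{b(k_1+x)}{x}}
   =\frac{1}{y^2}\CCO{(1-k_1-b)-2x-\frac{bk_1}{x}}.
\end{equation*}

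The sign is now immediate: when $b+k_1\geq 1$, the constant part $1-k_1-b$ is nonpositive, while $-2x<0$ and $-bk_1/x<0$ for every $x>0$. Thus the bracket is strictly negative for all $(x,y)$ in the open quadrant, the divergence is strictly negative there, and the Bendixson--Dulac criterion forbids any periodic orbit --- in particular any limit cycle --- in the open quadrant. Since all limit cycles live there, this proves the claim.

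The only genuinely delicate point is guessing the Dulac function: the factor $k_1+x$ is what cancels the Holling term, while the power $y^{2}$ (rather than $y$) is what removes the residual $1/y$ contribution and yields a divergence of the clean form $y^{-2}\times(\text{function of }x)$. With the seemingly more natural choice $\Dulac=(k_1+x)/(x y)$ the computation would instead leave a term $\frac{1-2x-k_1}{y}$ of indefinite sign, and the criterion would fail. Once the right $\Dulac$ is identified, the remaining verification is a routine differentiation.
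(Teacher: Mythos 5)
Your proof is correct: with $\Dulac(x,y)=\frac{k_1+x}{xy^2}$ the divergence of $(\Dulac v_1,\Dulac v_2)$ is indeed $\frac{1}{y^2}\bigl((1-k_1-b)-2x-\frac{bk_1}{x}\bigr)$, which is strictly negative on the open quadrant when $b+k_1\geq 1$, and every periodic orbit must lie in that simply connected quadrant since the axes are invariant. This is exactly the Bendixson--Dulac route that the paper attributes to the cited reference \cite{Daher-Aziz03bologn} (the paper itself gives no independent proof), so your argument is essentially the same approach, supplied in self-contained form.
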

	
	%If $\discrimQ=0$, ...................

	\subsubsection{Case with refuge ($m>0$)}
	By \Cref{theo:globstable}, if Condition \eqref{eq:2m+k} is
	satisfied, there can be no periodic orbits.
	
	Let us now give some sufficient conditions for the absence of
	periodic orbits,
	using Bendixson-Dulac criterion.
	Let us denote by $\cf(x,y)$ and $\cg(x,y)$ the coordinates of the
	vector field in \eqref{eq:simple}.
	For a Dulac function, we choose
	\begin{equation*}
		\Dulac(x,y)=x+k_1-m.
	\end{equation*}
	Let us look for conditions that ensure that
	$\frac{\partial(\cf \Dulac)}{\partial x}+\frac{\partial(\cg \Dulac)}{\partial
		y}<0$ in $\invrg$.
	We have
	\begin{align*}
		%\label{eq:dulac-bendixson}
		\frac{\partial(\cf \Dulac)}{\partial x}(x,y)%+\frac{\cg \Dulac}{\partial y}(x,y)\\
		&=-3x^2+2(1-k_1+m)x+k_1-m-ay,\\
		\frac{\partial(\cg \Dulac)}{\partial y}(x,y)
		&=\frac{b(x+k_1-m)(x+k_2-m-2y)}{x+k_2-m}.
	\end{align*}
	For $(x,y)\in\invrg$, we have
	\begin{equation*}
		\frac{\partial(\cf \Dulac)}{\partial x}(x,y)
		<-3m^2+2(1-k_1+m)x+k_1-m-ak_2.
	\end{equation*}
	Since the maximum of $-3m^2+m$ is $1/12$ and the maximum of $-m^2+m$
	is $1/4$, we deduce:
	\begin{align*}
		1-k_1+m>0\Rightarrow
		\frac{\partial(\cf \Dulac)}{\partial x}(x,y)
		&< -3m^2+m-k_1-ak_2+2\\
		&\leq 2+\frac{1}{12}-k_1-ak_2,\\
		1-k_1+m<0\Rightarrow
		\frac{\partial(\cf \Dulac)}{\partial x}(x,y)
		&< -m^2+m(1-2k_1)-k_1-ak_2\\
		&< -m^2-m -k_1-ak_2<0.
	\end{align*}
	In particular, a condition that ensures that
	$\frac{\partial(\cf \Dulac)}{\partial x}<0$ in $\invrg$ is
	\begin{equation}
		\label{eq:DBf}
		%\big\lgroup
		(k_1>1+m)
		\text{ or }(ak_2+k_1>2+\frac{1}{12}). %\big\rgroup.
	\end{equation}
	On the other hand,
	for $(x,y)\in\invrg$,
	$\frac{\partial(\cg \Dulac)}{\partial y}(x,y)$
	has the same sign as $x+k_2-m-2y$,
	and we have $x+k_2-m-2y<1-m-k_2$.
	Thus a sufficient condition for  $\frac{\partial(\cg \Dulac)}{\partial
		x}<0$ in $\invrg$ is
	\begin{equation}\label{eq:DBg}
		k_2>1-m.
	\end{equation}
	The same technique does not provide any sufficient condition for
	$\frac{\partial(\cf \Dulac)}{\partial x}+\frac{\partial(\cg \Dulac)}{\partial
		y}>0$ in $\invrg$. So, our next result concerning the absence of cycles is:
	%%%%%%%%%%%%
	\begin{lemma}\label{lem:Dulac0}
		A sufficient condition for \eqref{eq:simple} to have no
		periodic solution is
		$$
		\big\lgroup k_2>1-m\big\rgroup \text{ and }
		\big\lgroup (k_1>1+m)
		\text{ or }(ak_2+k_1>2+\frac{1}{12})\big\rgroup.
		$$
	\end{lemma}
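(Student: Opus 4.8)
The plan is to apply the Bendixson--Dulac criterion on the region $\invrg$, using the Dulac function $\Dulac(x,y)=x+k_1-m$ introduced above. The starting point is \Cref{theo:Ainvariant}, which guarantees that every periodic orbit of \eqref{eq:simple} is contained in $\invrg$; since $\invrg$ is a rectangle, hence convex and simply connected, it is a legitimate domain for the criterion. It therefore suffices to show that, under the stated hypotheses, the divergence $\frac{\partial(\cf\Dulac)}{\partial x}+\frac{\partial(\cg\Dulac)}{\partial y}$ is everywhere strictly negative on $\invrg$.

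The two required sign estimates are exactly the ones established in the computations preceding the lemma. The second factor of the hypothesis is precisely \eqref{eq:DBf}, which guarantees $\frac{\partial(\cf\Dulac)}{\partial x}<0$ throughout $\invrg$; the first factor, $k_2>1-m$, is precisely \eqref{eq:DBg}, which guarantees $\frac{\partial(\cg\Dulac)}{\partial y}<0$ throughout $\invrg$ (recall that on $\invrg$ this partial has the sign of $x+k_2-m-2y$, which is at most $1-m-k_2<0$). Adding these two strict inequalities yields $\frac{\partial(\cf\Dulac)}{\partial x}+\frac{\partial(\cg\Dulac)}{\partial y}<0$ on all of $\invrg$.

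To conclude I would invoke the Bendixson--Dulac argument in its usual Green's-theorem form: if $\gamma$ were a periodic orbit, it would be a Jordan curve lying in the convex set $\invrg$, hence it would bound a region $\Omega\subset\invrg$. Because $\gamma$ is a trajectory, one has $\oint_\gamma \Dulac(\cf\,\d y-\cg\,\d x)=0$, while Green's theorem rewrites this line integral as $\iint_\Omega\bigl(\frac{\partial(\cf\Dulac)}{\partial x}+\frac{\partial(\cg\Dulac)}{\partial y}\bigr)\,\d x\,\d y$, which is strictly negative by the previous paragraph---a contradiction. Hence no periodic orbit lies in $\invrg$, and by \Cref{theo:Ainvariant} none exists anywhere in $\R_+\times\R_+$.

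The one point that needs care, and the only genuine obstacle, is the regularity of the field across the line $x=m$, where $(x-m)_+$ has a corner. This is harmless here: the region $\invrg$ lies in the half-plane $x\geq m$, on which $(x-m)_+=x-m$, so on $\invrg$ the field $\vectorfield$ coincides with the smooth field obtained by dropping the positive part; moreover $\Dulac=x+k_1-m\geq k_1>0$ is $C^1$ and non-vanishing on $\invrg$, so $\Dulac\,\vectorfield$ is $C^1$ there and Green's theorem applies without modification. I would state this identification once and then carry out the whole argument on the smooth field.
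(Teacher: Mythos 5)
Your proposal is correct and follows essentially the same route as the paper: the paper's own argument is precisely the Bendixson--Dulac criterion on $\invrg$ with the Dulac function $\Dulac(x,y)=x+k_1-m$, where the two hypotheses of the lemma are exactly conditions \eqref{eq:DBf} and \eqref{eq:DBg} guaranteeing the negativity of the two divergence terms. Your additional care about spelling out the Green's-theorem contradiction, the convexity of $\invrg$ (so the enclosed region stays in $\invrg$), and the smoothness of the field on $\{x\geq m\}$ where $(x-m)_+=x-m$ only makes explicit what the paper leaves implicit.
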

%%%%%%%%%
	
Now, we consider the existence of limit cycles which are not occuring
from a Hopf bifurcation.  The special configuration of the existence
of a limit cycle enclosing three equilibrium points is numerically
investigated. In particular, when the system parameters satisfy
$a=0.5,k1=0.08,k2=0.2,b=0.1,m=0.0025,$ then three hyperbolic
equilibrium points exist, namely,
$E_1^*=(0.0222589;0.2197589)$, $E_2^*=(0.0299525;0.2274525)$,
$E_3^*=(0.3702886;0.5677886)$.
They define respectively a stable focus, a saddle point and an unstable
focus. Accordingly to the Poincar\'e index theorem,  the sum of the
corresponding indexes is equal to $1$.

The numerical simulations show that there exists a limit cycle, which is hyperbolic and stable, see \Cref{fig:3points-cycle}.

	%%%%%%%%%%%%%%%%%%%%%%%%%%%%%%%%%%%%%%%%%%%%%%%%%%%%%
	\section{Stochastic model}\label{sec:stoch}
	We now study the dynamics of the system
        \eqref{eq:stochastic}, with
	initial conditions $x_0>0$ and $y_0>0$.
        In the case when $m=0$ and $k_1=k_2$,
        the persistence and boundedness of
        solutions have been investigated in
        by Ji, Jiang and Shi in \cite{ji-jiang-shi}.
        A similar model has been studied by Fu, Jiang, Shi, Hayat and
        Alsaedi
        in \cite{fu-jiang-shi-hayat-alsaedi}.

\subsection{Existence and uniqueness of the positive global solution}
\label{subsec:existence}
	%%%%%%%%%%%%%%%%%%
\begin{theorem}\label{theo:stoch-existence}
For any initial condition $(x_0,y_0)\in\mathbb{R}_{+}^{2} $, the system
 \eqref{eq:stochastic} admits a unique solution $(x(t),y(t))$,
 defined for all $t\geq 0$ a.s.
        and this solution remains in $]0,+\infty[\times ]0,+\infty[$.
Furthermore, if $(x_0,y_0)\in ]0,+\infty[\times]0,+\infty[$,
this solution remains in $ ]0,+\infty[\times]0,+\infty[$,
whereas, if $(x_0,y_0)$ belongs to one of the axis $\R_{+}\times\{0\}$
or  $\{0\}\times\R_{+}$, it remains on this axis.
\end{theorem}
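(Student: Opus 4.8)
The plan is to follow the classical localization method of Khasminskii. Since the right-hand side of \eqref{eq:stochastic} is locally Lipschitz on the open quadrant $]0,+\infty[\times]0,+\infty[$ (the drift agrees with the locally Lipschitz field $v_1,v_2$ of \eqref{eq:simple}, and the diffusion coefficients $\sigma_1x$, $\sigma_2y$ are smooth), standard SDE theory gives, for each initial condition in the open quadrant, a unique solution defined up to an explosion time $\tau_e$. For $n$ large enough that $(x_0,y_0)\in{]1/n,n[}\times{]1/n,n[}$, set
$$\tau_n=\inf\{t\geq 0\tq x(t)\notin{]1/n,n[}\text{ or }y(t)\notin{]1/n,n[}\}.$$
The sequence $(\tau_n)$ is nondecreasing with limit $\tau_e$, so it suffices to prove that $\tau_e=+\infty$ a.s.; this simultaneously yields global existence and the fact that the solution never reaches the axes, hence remains in the open quadrant.

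The heart of the argument is a Lyapunov estimate. I would use the nonnegative function
$$V(x,y)=(x-1-\ln x)+(y-1-\ln y),\qquad x,y>0,$$
and denote by $L$ the generator of \eqref{eq:stochastic}, so that
$$LV=\CCO{1-\frac1x}v_1+\CCO{1-\frac1y}v_2+\tfrac12(\sigma_1^2+\sigma_2^2).$$
The point is that each term is dominated by an affine function of $V$. From $\CCO{1-\frac1x}x(1-x)=-(x-1)^2\leq 0$ one controls the logistic part; the predation term is harmless near the axis $x=0$ because $(x-m)_+/(k_1+(x-m)_+)\leq x/k_1$, so that $\CCO{1-\frac1x}$ times this term stays bounded by $ay/k_1$. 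In the predator equation, $\CCO{1-\frac1y}v_2=b(y-1)-b(y-1)y/(k_2+(x-m)_+)$ contributes a dominant negative quadratic in $y$ plus affine terms. Collecting these, the superlinear contributions are either negative or absorbed, and using $y\leq 2V+C$ one obtains a constant $K>0$ with $LV\leq K(1+V)$ on the open quadrant. Verifying this bound uniformly, both near the axes where $\ln$ is singular and at infinity, is the main technical step; the non-smoothness of $v_1$ at $x=m$ causes no difficulty, since It\^o's formula is applied to the $C^2$ function $V$ and only the values of $v_1,v_2$ enter $L$.

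With this estimate in hand, It\^o's formula applied to $V$, optional stopping at $\tau_n\wedge T$, and Gronwall's lemma give, for every $T>0$,
$$\expect\croche{V\bigl(x(\tau_n\wedge T),y(\tau_n\wedge T)\bigr)}\leq\bigl(V(x_0,y_0)+KT\bigr)e^{KT}.$$
On the event $\{\tau_n\leq T\}$ the trajectory reaches the boundary of ${]1/n,n[}\times{]1/n,n[}$, where $V$ is at least $\min(n-1-\ln n,\,1/n-1+\ln n)$, which tends to $+\infty$; Markov's inequality then forces $\prob(\tau_n\leq T)\to 0$ as $n\to\infty$, whence $\tau_e\geq T$ a.s. for every $T$, i.e.\ $\tau_e=+\infty$ a.s.

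It remains to treat the axes and confirm invariance. When $x_0=0$, both $v_1$ and the diffusion coefficient $\sigma_1x$ vanish on $\{x=0\}$, so $x(t)\equiv 0$ together with the one-dimensional logistic diffusion $dy=by(1-y/k_2)\,dt+\sigma_2y\,dw_2$ solves \eqref{eq:stochastic}; by pathwise uniqueness this is the unique solution, it stays on the $y$-axis, and it is global by the same $V$-argument in one variable. The case $y_0=0$ is symmetric. Finally, for an initial condition in the open quadrant, pathwise uniqueness prevents the trajectory from crossing the invariant axes, which confirms that the solution remains in $]0,+\infty[\times]0,+\infty[$ for all time.
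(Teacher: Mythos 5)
Your proposal is correct and takes essentially the same approach as the paper: the same logarithmic Lyapunov function (the paper uses $(x+1-\log x)+(y+1-\log y)$, which differs from yours only by an additive constant), the same localization by two-sided stopping times, It\^o's formula plus Gronwall, the blow-up of $V$ on the boundary of $]1/n,n[\times]1/n,n[$, and the same reduction of the axes cases to the one-dimensional stochastic logistic equation. The only cosmetic differences are that the paper first invokes a comparison theorem (Ferreyra--Sundar) to keep the solution in the open quadrant until explosion, whereas your localization yields positivity and non-explosion simultaneously, and that the paper concludes by contradiction where you use Markov's inequality.
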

\begin{proof} %of{theorem \cref{theo:stoch-existence}}
Since the coefficients of  \eqref{eq:stochastic} are locally
Lipschitz,
uniqueness of the solution until explosion time is guaranteed for any
initial condition.

Let us now prove global existence of the solution.

The case when $(x_0,y_0)\in\Bigl(\R_{+}\times\{0\}\Bigr)\cup
\Bigl(\{0\}\times\R_{+}\Bigr)$ is trivial because both equations in
\eqref{eq:stochastic} become independent,
for example if $y_0=0$ with $x_0\not=0$, we have $y(t)=0$ for all
$t\geq 0$, and $x$ is a solution to the stochastic
logistic equation
\begin{equation*}
  dx(t)=x(t)(1-x(t))dt + \sigma_{1} x(t) dw_{1}(t)
\end{equation*}
which is well known (see \Cref{subsec:comparison}), thus $x(t)$
is defined for every $t\geq 0$.

Assume now that $x_0>0$ and $y_0>0$.
Since the coordinate axes are stable by \eqref{eq:stochastic}, we deduce,
applying locally the comparaison theorem for SDEs
(see \cite[Theorem 1]{Ferreyra-sundar}, this theorem is given
for globally Lipschitz coefficients),
that the solution to
	\eqref{eq:stochastic} remains in $]0,+\infty[\times ]0,+\infty[$
	until its explosion time.
	
	Let $\tau_e$ be the explosion time of the solution to
	\eqref{eq:stochastic}.
       To show that $\tau_e =\infty$, we adapt the proof of
       \cite{Dalal}.
       Let $k_0 >0$ be large enough, such that $\left( x_{0},y_{0}\right) \in [\frac{1}{k_{0}},k_{0}]\times[\frac{1}{k_{0}},k_{0}]$. For each integer $k \geq k_0 $ we define the stopping time $$\tau_{k}=\inf \Big\{t\in [0,\tau_{e}):x\notin (\frac{1}{k},k) \text{\ or \ } y\notin (\frac{1}{k},k)\Big\}.$$
	The sequence $(\tau_k)$ is increasing as $k \rightarrow
        \infty$. Set $\tau_\infty = \lim_{k\rightarrow \infty }\tau_k
        $, whence $\tau_\infty\leq \tau_e$, (in fact, as
        $(x(t),y(t))>0$ a.s., we have $\tau_{\infty}=\tau_e$). It
        suffices to prove that $\tau_\infty =\infty$ a.s.. Assume that
        this statement is false, then there exist $T>0$ and
        $\varepsilon \in \left] 0,1\right[ $ such that $\prob \left(
          \{\tau_{\infty}\leq T\}\right) >\varepsilon$. Since
        $(\tau_k)$ is increasing we have $$\prob \left( \{\tau_{k}\leq
          T\}\right) >\varepsilon.$$
Consider now the positive definite function $V$ : $ ]0,+\infty[\times ]0,+\infty[\rightarrow ]0,+\infty[\times ]0,+\infty[$ given by $$V(x,y)=(x+1-\log x)+(y+1-\log y).$$
	Applying It\^o's formula, we get
	\begin{align*}
		dV(x,y)=&\Big[(x-1)(1-x-\frac{ay(x-m)}{k_{1}+x-m})+\frac{\sigma_{1}^{2}}{2}+b(y-1)(1-\frac{y}{k_{2}+x-m})+\frac{\sigma_{2}^{2}}{2}\Big]dt\\
		+&\sigma_{1}(x-1)dW_{1}+ \sigma_{2}(y-1)dW_{2}.
	\end{align*}
	The positivity of $x(t)$ and $y(t)$ implies
	\begin{align*}
		dV(x,y)&\leq\Big(2x+ay+\frac{\sigma_{1}^{2}+\sigma_{2}^{2}}{2}+by+\frac{y}{k_{2}}\Big)dt+
		\sigma_{1}(x-1)dW_{1}+ \sigma_{2}(y-1)dW_{2}\\
		&\leq\Big(2x+(a+b+\frac{1}{k_2})y+\frac{\sigma_{1}^{2}+\sigma_{2}^{2}}{2}\Big)dt+
		\sigma_{1}(x-1)dW_{1}+ \sigma_{2}(y-1)dW_{2}.
	\end{align*}
	Denote $c_1=a+b+\frac{1}{k_2}$, $c_2=\frac{\sigma_{1}^{2}+\sigma_{2}^{2}}{2}$.
	Using \cite[lemma 4.1]{Dalal}, we can write
	\begin{align*}
		2x+c_1y\leq& 4(x+1-\log x)+2c_1(y+1-\log y)\\
		\leq& c_3 V(x,y), %\text { where } c_3=\max(4,2c_1).
	\end{align*}
        where $c_3=\max(4,2c_1)$.
	Hence, denoting $c_4=\max(c_2,c_3)$,
	\begin{align*}
		dV(x,y)\leq&(c_2+c_3V(x,y))dt+\sigma_{1}(x-1)dW_{1}+\sigma_{2}(y-1)dW_{2}\\
		\leq& c_4(1+V(x,y))
                      dt+\sigma_{1}(x-1)dW_{1}+\sigma_{2}(y-1)dW_{2}. %, \text { where } c_4=\max(c_2,c_3).
	\end{align*}
	Integrating both sides from $0$ to $\tau_k \wedge T$, and taking expectations, we get
	\begin{equation*}
		\expect V(x(\tau_{k}\wedge T),y(\tau_{k}\wedge T))
		\leq V(x_{0},y_{0})+c_{4}T+c_{4}\int_{0}^{T} \expect V(x(\tau_{k}\wedge t),y(\tau_{k}\wedge t)dt.
	\end{equation*}
	By Gronwall's inequality, this yields
	\begin{equation}\label{*}
		\expect V(x(\tau_{k}\wedge T),y(\tau_{k}\wedge T))\leq c_5,
	\end{equation}
	where $c_5$ is the finite constant given by
	\begin{equation}\label{cond}
		c_5=(V(x_{0},y_{0})+c_{4}T)e^{c_{4}T}.
	\end{equation}
	Let $\Omega_k=\{\tau_k\leq T\}$. We have $\prob (\Omega_k)\geq \varepsilon$, and for all $\omega \in \Omega_k$, there exists at least one element of ${x(\tau_{k},\omega),y(\tau_{k},\omega)}$ which is equal either to $k$ or to $\frac{1}{k}$, hence $$V(x(\tau_{k}),y(\tau_{k}))\geq (k+1-\log k)\wedge (\frac{1}{k}+1+\log k).$$
	Therefore, by \eqref{*},
	$$c_{5}\geq \expect[1_{\Omega_{k}}(\omega)V(x(\tau_{k},\omega),y(\tau_{k},\omega)]\geq \varepsilon \Big[(k+1-\log k)\wedge (\frac{1}{k}+1+\log k)\Big],$$
	where $1_{\Omega_{k}}$ is the indicator function of $\Omega_{k},$. Letting $k\rightarrow \infty$, we get
	$c_{5}=\infty$, which contradicts \eqref{cond}, So we must have $\tau_{\infty}= \infty$ a.s.
	\end{proof}
	
%%%%%%%%%%%
\begin{remark}\label{rem:altproof}
  An alternative proof of non explosion in finite time
  can be obtained by using the comparison theorem,
since $0\leq x(t)\leq z_1(t)$ and $0\leq y(t)\leq z_2(t)$ a.s.~for every
$t\geq 0$, where $z_1$ and $z_2$ are geometric Brownian motions, with
\begin{equation*}
  dz_1(t)=z_1(t)dt+\sigma_1z_1(t)dW_1(t)
  \text{ and }
  dz_2(t)=b z_2(t)dt+\sigma_2z_2(t)dW_2(t).
\end{equation*}
\end{remark}

%%%%%%%%%%%%% COMPARISON RESULTS
\subsection{Comparison results}\label{subsec:comparison}
 In this section, we compare the dynamics of \eqref{eq:stochastic}
 with some simpler models, in view of applications to the long time
 behaviour of the solutions to \eqref{eq:stochastic}.	

 Applying locally the comparaison theorem for SDEs
	(see \cite[Theorem 1]{Ferreyra-sundar}, this theorem is given
        for globally Lipschitz coefficients),
	we have, for every $t\geq 0$,
        \begin{equation}
          \label{eq:comparison-xu}
          0\leq x(t)\leq \xu(t)
          \text{ a.s.}
        \end{equation}
        where $\xu$ is the solution to the stochastic logistic equation
        (also called stochastic Verhulst equation)
        with initial condition $x_0$:
	\begin{equation}\label{eq:xu}%\label{logistic}
          d\xu(t)=\xu(t)(1-\xu(t))dt+ \sigma_{1} \xu(t) dw_1(t),
          \quad \xu(0)=x_0.
	\end{equation}
	The process $\xu$ is well known and can be written explicitely, see
	\cite[page 125]{kloeden-platen}:
        \begin{equation*}
          \xu(t)=\frac{e^{ \bigl( 1-\frac{\sigma_1^2}{2}\bigr)t+\sigma_1w_1(t)}}{
                \frac{1}{x_0}+\int_0^te^{ \bigl(
                  1-\frac{\sigma_1^2}{2}  \bigr)s+\sigma_1w_1(s)}ds}.
            \end{equation*}
By \cite[Lemma 2.2]{liu-shen2015}, $\xu$ is uniformly
bounded in $L^p$ for every $p>0$. Thus, by \eqref{eq:comparison-xu},
for every $p>0$, there exists a constant $K_p$ such that
\begin{equation}\label{moment}
		\sup_{t \geq 0} \expect \left(x(t)\right)^p < K_p.
 \end{equation}
Using again the comparison theorem, we get, for every $t\geq 0$,
         \begin{equation}\label{eq:comparison-yu}
          0\leq y(t)\leq \yu(t),
         \end{equation}
         where $\yu$ is the solution to
         \begin{equation}\label{eq:yu}
           d\yu(t)=b \yu(t)\CCO{1-\frac{\yu(t)}{k_2+\xu(t)}}dt
           +  \sigma_{2} \yu(t) dw_{2}(t),
           \quad \yu(0)=y_0,
         \end{equation}
         which can be explicited with the help of $\xu$:
         \begin{equation}\label{eq:yu-explicit}
         \yu(t)=\frac{e^{ \bigl( b-\frac{\sigma_2^2}{2}\bigr)t+\sigma_2w_2(t)}}{
           \frac{1}{y_0}+b\int_0^t\frac{1}{k_2+\xu(s)}e^{
             \bigl(b-\frac{\sigma_2^2}{2}  \bigr)s+\sigma_2w_2(s)}ds} .
       \end{equation}
Similarly, we have, for every $t\geq 0$,
\begin{align}
  0&\leq \xdd(t)\leq x(t) \label{eq:comparison-xdd}\text{ a.s.},\\
  0&\leq \ydd(t)\leq y(t) \label{eq:comparison-ydd}\text{ a.s.},
\end{align}
with
\begin{align}
  d\xdd(t)& =\Bigl( \xdd(t)(1-\xdd(t))-a \yu(t) \Bigr)dt
            + \sigma_{1} \xdd(t) dw_{1}(t),
            \quad \xdd(0)=x_0,
            \label{eq:xdd}\\
  d\ydd(t)& =b \ydd(t)\CCO{ 1-\frac{\ydd(t)}{k_2} }dt
            + \sigma_{2} \ydd(t) dw_{2}(t),
            \quad \ydd(0)=y_0.
            \label{eq:ydd}
\end{align}
Note that $\xdd$ is defined with the help of the process $\yu$ defined by
\eqref{eq:yu}.

The following property of stochastic logistic processes will be
useful:
       %%%%%%%%%%%%%%%%%%%%%%%%%%%%%%%%%%%%
       \begin{lemma}\label{lem:limit_logistic}
         (\cite[Theorem 3.2 and Theorem 4.1]{liu-shen2015})
       The process $\xu$ converges a.s.~to $0$ if
	$\sigma_1^2\geq 2$, whereas it converges to a nondegenerate stationary
	distribution if $0<\sigma_1^2< 2$.

        Similarly, $\ydd$ converges a.s.~to $0$ if
	$\sigma_2^2\geq 2b$, whereas it converges to a nondegenerate stationary
	distribution if $0<\sigma_2^2< 2$.
       \end{lemma}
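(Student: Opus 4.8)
The plan is to treat both $\xu$ and $\ydd$ as instances of the one-dimensional stochastic logistic (Verhulst) equation $dz=z(r-cz)\,dt+\sigma z\,dw$, with $(r,c,\sigma)=(1,1,\sigma_1)$ for $\xu$ (see \eqref{eq:xu}) and $(r,c,\sigma)=(b,b/k_2,\sigma_2)$ for $\ydd$ (see \eqref{eq:ydd}). The relevant quantity is the sign of $r-\sigma^2/2$, whose vanishing gives the thresholds $\sigma_1^2=2$ and $\sigma_2^2=2b$. Since the two claims are precisely the standard extinction/stationarity dichotomy for this equation, I would prove the result once for a generic $z$ and then specialize.

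For the extinction part ($\sigma^2\geq 2r$), I would apply It\^o's formula to $\ln z$, which yields
\begin{equation*}
\ln z(t)=\ln z(0)+\CCO{r-\frac{\sigma^2}{2}}t-c\int_0^t z(s)\,ds+\sigma w(t).
\end{equation*}
Dividing by $t$ and using the strong law of large numbers for Brownian motion ($w(t)/t\to0$ a.s.) together with $z\geq0$, one gets $\limsup_{t\to\infty}t^{-1}\ln z(t)\leq r-\sigma^2/2$. When $\sigma^2>2r$ this upper bound is strictly negative, so $z(t)\to0$ exponentially fast almost surely. For the stationary part ($\sigma^2<2r$) I would invoke one-dimensional diffusion theory: $z$ is a regular diffusion on $]0,+\infty[$, and computing its scale function and speed measure shows that both boundaries are inaccessible exactly when $r-\sigma^2/2>0$, making the diffusion positive recurrent and ergodic with a unique invariant probability. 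Its density solves the stationary Fokker--Planck equation and is, up to normalization, of Gamma type, $\propto z^{2r/\sigma^2-2}\,e^{-2cz/\sigma^2}$, which is integrable near $0$ precisely when $2r/\sigma^2-2>-1$, i.e.\ $\sigma^2<2r$; convergence in distribution to this law then follows from the ergodic theorem for one-dimensional diffusions.

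The main obstacle is the critical case $\sigma^2=2r$ (the boundaries $\sigma_1^2=2$, resp.\ $\sigma_2^2=2b$), where the $\limsup$ bound above only gives $\leq0$ and does not by itself force extinction. Here I would work directly with the explicit solution: when $r-\sigma^2/2=0$ it reduces to $z(t)=e^{\sigma w(t)}\big/\bigl(z_0^{-1}+c\int_0^t e^{\sigma w(s)}\,ds\bigr)$, and I would show that the denominator grows strictly faster than the numerator. Writing $1/z(t)=z_0^{-1}e^{-\sigma w(t)}+c\int_0^t e^{\sigma(w(s)-w(t))}\,ds$ and using a time-reversal of the Brownian path together with its pathwise recurrence (or the law of the iterated logarithm), one shows that this integral tends to $+\infty$ almost surely, whence $z(t)\to0$. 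This delicate boundary step is exactly what is established in \cite[Theorems 3.2 and 4.1]{liu-shen2015}, which I would cite to close the critical case rather than reproduce the fine pathwise estimates.
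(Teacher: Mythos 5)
Your reduction of both \eqref{eq:xu} and \eqref{eq:ydd} to the generic logistic SDE $dz=z(r-cz)\,dt+\sigma z\,dw$, with $(r,c,\sigma)=(1,1,\sigma_1)$ and $(b,b/k_2,\sigma_2)$ respectively, is correct, and it is worth noting that the paper contains no proof of this lemma at all: it is quoted from \cite[Theorems 3.2 and 4.1]{liu-shen2015}, so your proposal is necessarily a different, more self-contained route. Your two non-critical arguments are sound and standard: It\^o's formula on $\ln z$ together with $w(t)/t\to 0$ gives a.s.\ exponential extinction when $\sigma^2>2r$, and the scale/speed-measure analysis with the Gamma-type stationary density proportional to $z^{2r/\sigma^2-2}e^{-2cz/\sigma^2}$ gives existence, uniqueness and attractivity of the stationary law when $\sigma^2<2r$. (One wording correction: the endpoints $0$ and $+\infty$ are inaccessible for \emph{every} $\sigma>0$, which is why extinction is only asymptotic; what characterizes $\sigma^2<2r$ is that $0$ is non-attracting, $s(0+)=-\infty$, together with finiteness of the speed measure --- which is precisely the integrability near $0$ that you check.)

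The genuine gap is the critical case $\sigma^2=2r$, and it cannot be repaired along the lines you sketch. The time-reversal identity holds for each fixed $t$ in distribution only; applied along $t\to\infty$ it yields $\prob\bigl(1/z(t)>K\bigr)\to 1$ for every $K$, i.e.\ $z(t)\to 0$ \emph{in probability}, not almost surely. In fact the pathwise statement you want, namely $\int_0^t e^{\sigma(w(s)-w(t))}\,ds\to+\infty$ a.s., is false: setting $\xi=\ln z$, the critical equation reads $d\xi=-ce^{\xi}\,dt+\sigma\,dw$, whose scale density $s'(\xi)=\exp\bigl(\tfrac{2c}{\sigma^2}(e^{\xi}-1)\bigr)$ tends to the positive constant $e^{-2c/\sigma^2}$ as $\xi\to-\infty$, so that $s(-\infty)=-\infty$ and $s(+\infty)=+\infty$. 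Hence $\xi$ is a recurrent diffusion on $\R$, so $\limsup_{t\to\infty}z(t)=+\infty$ a.s., and consequently $\liminf_{t\to\infty}\int_0^te^{\sigma(w(s)-w(t))}\,ds=0$ a.s. So no ``fine pathwise estimate,'' in \cite{liu-shen2015} or elsewhere, can give almost sure convergence to $0$ at criticality; what is true there is only extinction in probability (equivalently, non-persistence in the mean). Your proof is therefore complete exactly for the strict inequalities $\sigma_1^2>2$ and $\sigma_2^2>2b$, and the same caveat applies to the boundary cases of the lemma as stated (and to their use in \Cref{theo:extinction}), which hold only in this weaker sense.
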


%%%%%%%%%%
\begin{remark}\label{rem:permanence_sandwich}
  The global existence and uniqueness of $(\xu,\yu,\xdd,\ydd)$ can be
  obtained via the same methods as in \Cref{subsec:existence},
  see in particular \Cref{rem:altproof}.
\end{remark}

%%%%%%%%%%%%%%%%%%% PERSISTENCE-EXTINCTION %%%%%%%%%%%%%%%%%%%%%%%%%%%%%%
\subsection{Extinction}
We show that, when the noise is large, the system
\eqref{eq:stochastic} goes almost surely (but in infinite time) to
extinction.

%%%%%%%%%%%%%%% THEO STATIONARY
\begin{theorem}\label{theo:extinction}
Assume that $\sigma_1^2\geq 2$.
Then $\lim_{t\rightarrow\infty}x(t)=0$ a.s.
If moreover
 $\sigma_2^2\geq 2b$, then  $\lim_{t\rightarrow\infty}y(t)=0$ a.s.
\end{theorem}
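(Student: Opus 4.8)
The plan is to reduce both assertions to the scalar stochastic logistic equations set up in \Cref{subsec:comparison}, whose asymptotics are already settled by \Cref{lem:limit_logistic}. For the prey this is immediate: the comparison inequality \eqref{eq:comparison-xu} gives $0\leq x(t)\leq\xu(t)$ a.s., and since $\sigma_1^2\geq 2$, the first part of \Cref{lem:limit_logistic} says that the logistic process $\xu$ defined by \eqref{eq:xu} tends to $0$ a.s. The sandwich then forces $x(t)\to 0$ a.s., which proves the first statement with no further computation.

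For the predator, the upper comparison \eqref{eq:comparison-yu} gives $0\leq y(t)\leq\yu(t)$, so it suffices to prove $\yu(t)\to 0$ a.s. The only difficulty is that $\yu$ solves \eqref{eq:yu}, a logistic equation whose carrying capacity $k_2+\xu(t)$ is not constant. My plan is to exploit the fact, just established, that $\xu(t)\to 0$, so that the carrying capacity converges to $k_2$. Since $\xu$ is continuous and converges at infinity, for almost every $\omega$ the quantity $M(\omega)=\sup_{t\geq 0}\xu(t)$ is finite; then $k_2+\xu(s)\leq k_2+M$, and the explicit representation \eqref{eq:yu-explicit} yields the pathwise bound
\begin{equation*}
\yu(t)\leq \frac{e^{(b-\sigma_2^2/2)t+\sigma_2 w_2(t)}}{\frac{1}{y_0}+\frac{b}{k_2+M}\int_0^t e^{(b-\sigma_2^2/2)s+\sigma_2 w_2(s)}\,ds},
\end{equation*}
whose right-hand side is exactly the explicit solution of a \emph{constant}-capacity logistic SDE with carrying capacity $k_2+M$. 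The extinction threshold $\sigma_2^2\geq 2b$ in \Cref{lem:limit_logistic} depends only on the growth rate $b$ and not on the carrying capacity, so that bounding process tends to $0$; hence $\yu(t)\to 0$ and $y(t)\to 0$ a.s.

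The main obstacle is the borderline case $\sigma_2^2=2b$. A direct It\^o computation on $\log\yu$ only yields $\limsup_{t\to\infty}\tfrac{1}{t}\log\yu(t)\leq b-\tfrac{\sigma_2^2}{2}=0$, which is too weak to conclude convergence; the oscillations of $w_2$ must be absorbed by the negative feedback term, and this is precisely what the sharp conclusion of \Cref{lem:limit_logistic} encodes. I would therefore route the critical case entirely through \Cref{lem:limit_logistic} rather than through any Lyapunov-exponent estimate. A minor technical point to handle is that the carrying capacity $k_2+M(\omega)$ is random: writing $\Phi_c(t)$ for the bounding ratio with $b/(k_2+M)$ replaced by a constant $c>0$, one notes that $\Phi_c$ is monotone decreasing in $c$, so applying \Cref{lem:limit_logistic} along a fixed sequence $c_n\downarrow 0$ produces a single almost sure event on which $\Phi_{c_n}(t)\to 0$ for all $n$; on that event $\limsup_t\Phi_c(t)\leq\limsup_t\Phi_{c_n}(t)=0$ for every fixed $c>0$ (choosing $c_n\leq c$), and in particular for $c=b/(k_2+M(\omega))$, which completes the argument.
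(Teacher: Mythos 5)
Your proof is correct, and for the prey it coincides with the paper's one-line argument (comparison \eqref{eq:comparison-xu} plus \Cref{lem:limit_logistic}). For the predator, however, you take a genuinely different route. The paper exploits the independence of $w_1$ and $w_2$: viewing $\yu$ as a function of the pair $(w_2,\xu)$, it fixes an $\xu$-path tending to $0$ and asserts --- without detailed proof --- the asymptotic equivalence \eqref{eq:yu-ydd}, namely $\yu(t)-\ydd(t)\to 0$, where $\ydd$ is the constant-capacity logistic process \eqref{eq:ydd}; extinction of $\ydd$ (from \Cref{lem:limit_logistic}) then transfers to $\yu$, hence to $y$ by \eqref{eq:comparison-yu}. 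You instead dominate $\yu$ pathwise, through the explicit formula \eqref{eq:yu-explicit}, by the solution of a logistic SDE with the constant (but random) carrying capacity $k_2+M(\omega)$, where $M=\sup_{t\geq 0}\xu(t)<\infty$ a.s., and you neutralize the randomness of the capacity by monotonicity in the competition coefficient along a deterministic sequence $c_n\downarrow 0$. This buys two things: no conditioning or independence argument is needed (your domination and the convergence of the bounding processes $\Phi_{c_n}$ are pointwise statements about SDEs driven by $w_2$ alone), and you never have to justify \eqref{eq:yu-ydd}, which is the one step the paper leaves unproved. Conversely, the paper's two-sided comparison $\ydd\leq y\leq\yu$ together with \eqref{eq:yu-ydd} also yields the remark following the theorem --- convergence of $y$ to a nondegenerate stationary distribution when $\sigma_1^2\geq2$ and $0<\sigma_2^2<2b$ --- which your one-sided bound cannot capture. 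One point you should make explicit: \Cref{lem:limit_logistic} is stated only for the capacities $1$ and $k_2$, so you are invoking its extension to the capacities $b/c_n$; this is indeed within the scope of the cited result of Liu and Shen, and can be made self-contained by noting that if $z$ solves $dz=z(b-\lambda z)\,dt+\sigma_2 z\,dw_2$ then $\alpha z$ solves the same equation with $\lambda$ replaced by $\lambda/\alpha$, so that the extinction property does not depend on the competition coefficient.
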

%%%%%%%%%%%
\begin{proof}
If $\sigma_1^2\geq2$, we deduce from \eqref{eq:xu} and
\Cref{lem:limit_logistic} that $x(t)$ converges to $0$ a.s.

Assume moreover that $\sigma_2^2\geq 2b$.
From \eqref{eq:yu-explicit}, the random variable
$\yu :\,\Omega\rightarrow C(\R_+;\R_+)$ is a function of two independent
random variables, $w_2$ and $\xu$ (the latter is a function of $w_1$).
For a fixed $\xu\in C(\R_+;\R_+)$ such that
$\lim_{t\rightarrow\infty}\xu(t)=0$, we have
\begin{equation}\label{eq:yu-ydd}
  \lim_{t\rightarrow\infty}\bigl(\yu(t)-\ydd(t)\bigr)=0,
\end{equation}
where $\ydd$ is defined by \eqref{eq:xdd}.
Thus, since $\xu(t)$ goes to $0$ a.s., \Cref{eq:yu-ydd} is
satisfied a.s.
Since, by \Cref{lem:limit_logistic},
$\ydd(t)$ converges a.s.~to $0$ if $\sigma_2^2\geq 2b$, we deduce that
$\lim_{t\rightarrow\infty}\yu(t)=0$ a.s.,
and the result follows from \eqref{eq:comparison-yu}.
\end{proof}
%%%%%

%%%%%%%%%%%
\begin{remark}
  Since $\ydd(t)\leq y(t)\leq\yu(t)$, we can deduce also from
  \eqref{eq:yu-ydd} that, if $\sigma_1^2\geq2$ with
$0<\sigma_2^2<2b$, then
$x(t)$ converges a.s.~to $0$ while $y(t)$
converges to a nondegenerate stationary
	distribution.
\end{remark}
%%%%%%%%%%%

%%%%%%%%%%%%%%%%%%%%%%%%%%%%%%%%%%%%%%%%%%%%%%%%%%%%%%%%%%%%%%%%%%%%%%%
\subsection{Existence of a stationary distribution}
In this section, we assume that $m>0$.
The existence of a stationary distribution is proved for a similar
(but different) system without refuge in \cite{fu-jiang-shi-hayat-alsaedi}.

%%%%%%%%%%%%%%% THEO STATIONARY
\begin{theorem}\label{theo:stationary}
Assume that $0<\sigma_1^2<2$ and $0<\sigma_2^2<2b$, with $m>0$.
Then the system \eqref{eq:stochastic}
has a unique stationary distribution $\mu$ on
$]0,+\infty[\times ]0,+\infty[ $.
  Moreover, the system \eqref{eq:stochastic} is ergodic and
  its transition probility $\prob((x,y),t,.)$
  satisfies
\begin{equation*}
  %\label{eq:muconv}
  \prob((x_0,y_0),t,\varphi)\rightarrow\mu(\varphi)
  \text{ when }t\rightarrow\infty
\end{equation*}
for each $(x_0,y_0)\in]0,+\infty[\times ]0,+\infty[$
and each bounded continuous function
$\varphi :\,\allowbreak]0,+\infty[\times ]0,+\infty[\allowbreak\rightarrow\allowbreak\R$.
\end{theorem}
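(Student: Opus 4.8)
The plan is to apply the classical Has'minskii criterion for the existence of a unique ergodic stationary distribution, following the approach used for a related model in \cite{fu-jiang-shi-hayat-alsaedi}. By \Cref{theo:stoch-existence}, for every initial condition in the open quadrant the system \eqref{eq:stochastic} has a unique global solution which stays in $]0,+\infty[\times]0,+\infty[$, so $(x(t),y(t))$ is a regular time-homogeneous diffusion on $]0,+\infty[\times]0,+\infty[$. The criterion then reduces the theorem to two verifications: first, that the diffusion is uniformly elliptic on every compact subset of the open quadrant; second, that there is a $C^2$ function $V$, proper on $]0,+\infty[\times]0,+\infty[$, whose generator satisfies $\mathcal{L}V\leq -1$ outside some compact set $K\subset\,]0,+\infty[\times]0,+\infty[$. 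The first point is immediate: the generator is
$$\mathcal{L}=\cf\,\partial_x+\cg\,\partial_y+\frac{\sigma_1^2x^2}{2}\,\partial_{xx}+\frac{\sigma_2^2y^2}{2}\,\partial_{yy},$$
where $(\cf,\cg)$ are the drift coordinates of \eqref{eq:simple}, and its diffusion matrix $\mathrm{diag}(\sigma_1^2x^2,\sigma_2^2y^2)$ is positive definite for $x,y>0$ (here $\sigma_1,\sigma_2>0$), with smallest eigenvalue bounded below on each compact subset of the open quadrant.

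The core of the argument is the construction of the Lyapunov function. I would start from
$$V(x,y)=x+y-c_1\log x-c_2\log y,$$
with weights $c_1,c_2>0$ to be fixed; this function is bounded below and tends to $+\infty$ as $(x,y)$ approaches any boundary piece of $]0,+\infty[\times]0,+\infty[$, including infinity, so it is proper. A direct computation gives
$$\mathcal{L}V=\cf\CCO{1-\frac{c_1}{x}}+\cg\CCO{1-\frac{c_2}{y}}+\frac{c_1\sigma_1^2+c_2\sigma_2^2}{2}.$$
For $x\to+\infty$ (resp.\ $y\to+\infty$), the terms $\cf\sim-x^2$ (resp.\ $\cg\sim-by^2/(k_2+(x-m)_+)$) dominate and drive $\mathcal{L}V\to-\infty$, controlling the two unbounded directions. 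The delicate directions are $x\to0$ and $y\to0$, and this is exactly where all three hypotheses enter. Since $m>0$, for $x<m$ we have $(x-m)_+=0$, so $\cf=x(1-x)$ and $\cf/x\to1$; hence the logarithmic term in $x$ contributes $-c_1(1-\sigma_1^2/2)$, which is negative precisely because $\sigma_1^2<2$. Likewise, $\cg/y\to b$ as $y\to0$, so the logarithmic term in $y$ contributes $-c_2(b-\sigma_2^2/2)<0$ because $\sigma_2^2<2b$.

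The main obstacle will be the mixed boundary regions, and in particular $x\to0$ with $y$ of moderate or large size. Indeed, in the strip $x<m$ the quantity $\cg-c_2\cg/y=b(y-c_2)(1-y/k_2)$ is a downward parabola in $y$ whose maximum is a positive constant, so with $c_1=c_2=1$ one cannot guarantee $\mathcal{L}V\leq-1$ near the segment $\{x=0\}$. The remedy is to enlarge the weight $c_1$: since $1-\sigma_1^2/2>0$, the strictly negative contribution $-c_1(1-\sigma_1^2/2)$ from the $-\log x$ term can be made to dominate the bounded positive contribution of the $y$-parabola, after which $\mathcal{L}V\leq-1$ holds uniformly for $x$ near $0$; one then checks, region by region, that the choice of $c_1$ (and $c_2$) does not spoil the estimates for large $x$, large $y$, or $y\to0$ (in each of which a genuinely quadratic negative term dominates). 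Once $V$ is fixed and $\mathcal{L}V\leq-1$ is verified outside a single compact set $K$, Has'minskii's theorem yields a unique stationary distribution $\mu$ on $]0,+\infty[\times]0,+\infty[$, together with ergodicity and the stated convergence $\prob((x_0,y_0),t,\varphi)\rightarrow\mu(\varphi)$ for every bounded continuous $\varphi$.
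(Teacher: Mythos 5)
Your overall framework---Khasminskii's criterion, nondegeneracy of the diffusion on compact subsets of the open quadrant plus a recurrence condition---is exactly the paper's \Cref{lem:khasminskii}, and your formula for $\mathcal{L}V$ with $V(x,y)=x+y-c_1\log x-c_2\log y$ is correct. The gap is in the decisive step: the claim that, after enlarging $c_1$ to handle the band near $\{x=0\}$, the remaining regions are safe because ``a genuinely quadratic negative term dominates'' in each of them. This fails in the band near $\{y=0\}$. Letting $y\to 0$ (the predation term is $O(y)$ there), your expression gives
\begin{equation*}
\mathcal{L}V \longrightarrow (1-x)(x-c_1)+\frac{c_1\sigma_1^2}{2}-c_2\CCO{b-\frac{\sigma_2^2}{2}}.
\end{equation*}
For $1<x<c_1$ the product $(1-x)(x-c_1)$ is \emph{positive}, with maximum $\frac{(c_1-1)^2}{4}$ at $x=\frac{1+c_1}{2}$: the drift contribution $-c_1\cf/x\approx c_1(x-1)$ of the $-c_1\log x$ term defeats the quadratic decay $-x(x-1)$ on this whole band. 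The band touches the boundary $\{y=0\}$, hence lies outside every compact subset of $]0,+\infty[\times]0,+\infty[$, so you must have $\mathcal{L}V\leq-1$ on it; the only negative term available there is the constant $-c_2(b-\sigma_2^2/2)$, which forces $c_2\gtrsim(c_1-1)^2$.

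That requirement is incompatible with the one you already imposed near $\{x=0\}$. There, as you noted, the positive hump of $\frac{b}{k_2}(y-c_2)(k_2-y)$ must be beaten by $-c_1(1-\sigma_1^2/2)$; but that hump equals $\frac{b(k_2-c_2)^2}{4k_2}$, which grows \emph{quadratically} with $c_2$---it is not a constant once $c_2$ is enlarged. So you need $c_1\gtrsim c_2^2\gtrsim (c_1-1)^4$, while the $x\to0$ estimate also forces $c_1>\frac{1}{1-\sigma_1^2/2}>1$; as $\sigma_1^2$ approaches $2$ (or $\sigma_2^2$ approaches $2b$) these constraints are plainly contradictory, so no choice of weights yields $\mathcal{L}V\leq-1$ outside a compact set under the theorem's hypotheses alone. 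At best this Lyapunov form works under extra parameter restrictions, which would prove less than the stated theorem. This is precisely why the paper builds no two-dimensional Lyapunov function: it verifies condition (B.2) of \Cref{lem:khasminskii} by sandwiching the solution between one-dimensional stochastic logistic processes, $x\leq\xu$ and $\ydd\leq y\leq\yu$, and invoking hitting-time estimates (\Cref{lem:hitting-xu-ydd,lem:hitting-xdd,lem:hitting-yu}, based on \cite{liu-shen2015}). The refuge is the key ingredient there: for $x<m$ the prey equation decouples from $y$ and is exactly logistic, which produces a return time for $x$ that is uniform in the predator density---exactly the uniformity your static Lyapunov function cannot deliver. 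To salvage a Lyapunov proof you would need a function encoding this decoupling (for instance treating the regions $x<m$ and $x\geq m$ separately), not merely larger weights.
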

%%%%%%%%%%%
\begin{remark}
  \Cref{theo:stationary} shows that, contrarily to the deterministic case,
  when $\min\{\sigma_1,\sigma_2\}>0$, there is only one equilibrium
  for the system \eqref{eq:stochastic}
  in the open quadrant $]0,+\infty[\times ]0,+\infty[$.

  Note also that, when $\min\{\sigma_1,\sigma_2\}>0$,
  there is no invariant closed subset in the open quadrant
  $]0,+\infty[\times ]0,+\infty[$ for the system
  \eqref{eq:stochastic}. Indeed,
  since the noise in \eqref{eq:stochastic} acts in all directions,
  the viability conditions of \cite{daprato-frankowska}
  are satisfied for no closed convex subset of $]0,+\infty[\times
  ]0,+\infty[$.

In particular, there is no equilibrium point for
\eqref{eq:stochastic}, thus the limit stationary distribution is
nondegenerate.
\end{remark}
%%%%%%%%%%%
\begin{remark}
The ecologically less interesting case
when $(x,y)$ stays in one of the coordinate axes has
similar features, since, by \cite[Theorem 3.2]{liu-shen2015},
the stochastic logistic equation admits a
unique invariant ergodic distribution when the diffusion coefficient is
positive but not too large.
\end{remark}

Our proof of \Cref{theo:stationary}
is based on the following well known result:
\begin{lemma}\label{lem:khasminskii}
  Consider the equation
  \begin{equation}\label{eq:khasminskii}
    dX(t)=f(X(t))\,dt+g(X(t))\,dW(t)
  \end{equation}
  where $f:\,\R^d\rightarrow \R^d$ and $g:\,\R^d\rightarrow
  \R^{m\times d}$ are locally Lipschitz functions with locally
  sublinear growth, and $W$ is a standard Brownian motion on
  $\R^m$. Denote by $A(x)$ the $m\times m$ matrix $g(x)\,g(x)^{T}$.
  Assume that $]0,+\infty[^d$ is invariant by \eqref{eq:khasminskii} and
  that there exists a bounded open subset $U$ of $]0,+\infty[^d $ such that
the  following conditions are satisfied:
\begin{itemize}
\item[\emph{(B.1)}] In
  a neighborhood of $U$, the smallest eigenvalue of $A(x)$ is bounded
  away from $0$,
\item[\emph{(B.2)}] If $x\in\R^d\setminus U$,
  the expectation of the hitting time
  $\tau^U$ at which the solution to  \eqref{eq:khasminskii} starting
  from $x$ reaches the set $U$ is finite, and
  $\sup_{x\in K}\expect^x\tau^U<\infty$ for every compact subset $K$
  of $]0,+\infty[^d$.
\end{itemize}
Then \eqref{eq:khasminskii} has a unique stationary distribution
$\mu$ on $]0,+\infty[^d$.
Moreover, \eqref{eq:khasminskii} is ergodic,
its transition probility $\prob(x,t,.)$
%associated with \eqref{eq:khasminskii}
satisfies
\begin{equation}
  \label{eq:muconv}
  \prob(x,t,\varphi)\rightarrow\mu(f)\text{ when }t\rightarrow\infty
\end{equation}
for each $x\in\R^d$ and each bounded continuous
$\varphi :\,]0,+\infty[^d\rightarrow\R$.
\end{lemma}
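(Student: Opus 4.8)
The plan is to place the statement within Khasminskii's classical framework for the ergodicity of diffusion processes (see Khasminskii, \emph{Stochastic Stability of Differential Equations}), verifying the two standard ingredients---local nondegeneracy and positive recurrence---and then invoking the general ergodic theorem for recurrent Markov processes. As a preliminary I would record that, under the local Lipschitz and sublinear growth assumptions together with the invariance of $]0,+\infty[^d$, the solution to \eqref{eq:khasminskii} is a well-defined time-homogeneous strong Markov process on $]0,+\infty[^d$ with the Feller property; this follows from uniqueness of solutions and continuous dependence on the initial condition.

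Next I would exploit condition (B.1). The uniform positivity of the smallest eigenvalue of $A(x)=g(x)\,g(x)^T$ in a neighborhood of $U$ makes the generator uniformly elliptic there, so that the transition kernel $\prob(x,t,\cdot)$ admits a strictly positive, jointly continuous density with respect to Lebesgue measure on that neighborhood (by parabolic regularity theory, or equivalently by the Stroock--Varadhan support theorem). This yields two consequences: the process is strong Feller near $U$, and $U$ is accessible with positive probability from every point, with the process able to reach any open subset of $U$. In the terminology of Meyn and Tweedie, (B.1) turns $U$ into a petite (small) set. Condition (B.2) then supplies positive recurrence: the finiteness of $\expect^x\tau^U$ for $x\notin U$, uniform over compacta, says precisely that the process returns to $U$ in finite expected time, and combined with the petiteness of $U$ this gives positive Harris recurrence.

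Finally I would invoke the ergodic theorem for positively Harris recurrent Feller processes: such a process possesses a unique invariant probability measure $\mu$ on $]0,+\infty[^d$, and its transition probabilities converge to $\mu$, which is exactly the assertion \eqref{eq:muconv}. Uniqueness of $\mu$ and the convergence for \emph{all} bounded continuous test functions (rather than merely $\mu$-almost everywhere) both rest on the irreducibility coming from (B.1).

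The technical heart---and the step I expect to require the most care---is the passage from the two analytic hypotheses to the probabilistic recurrence structure: establishing that $U$ is genuinely a petite set and that finiteness of expected hitting times yields a \emph{finite} invariant measure. In Khasminskii's original treatment this is handled by a regeneration (cycle) argument, constructing the candidate invariant measure from expected occupation times between successive visits to $U$ and verifying that it has finite total mass; the nondegeneracy (B.1) is what guarantees both that this measure is unique and that the full convergence in \eqref{eq:muconv} holds. Since the lemma is a standard result, the cleanest exposition would cite Khasminskii (or a Meyn--Tweedie reference) for the ergodic conclusion and concentrate the argument on checking that (B.1) and (B.2) match its hypotheses.
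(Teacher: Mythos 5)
Your proposal is correct and takes essentially the same route as the paper: the paper's proof consists precisely of citing Khasminskii's monograph (Theorem 4.1 for existence, Corollary 4.4 for uniqueness, Theorems 4.2--4.3 for ergodicity and convergence, and the Section 4.8 remarks for restriction to the invariant domain $]0,+\infty[^d$), which is exactly the citation strategy you recommend at the end. Your sketch of the internal mechanics (nondegeneracy making $U$ a small set, finite expected hitting times giving positive recurrence, and the regeneration/cycle construction of the invariant measure) is a faithful outline of what those cited theorems actually prove, so it adds exposition but no divergence in approach.
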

%%%%%%%%%
The existence of the stationary distribution
comes from \cite[Theorem 4.1]{khasminski},
its uniqueness from \cite[Corollary 4.4]{khasminski},
the ergodicity from \cite[Theorem 4.2]{khasminski},
and \eqref{eq:muconv}
comes from \cite[Theorem 4.3]{khasminski}.
Section 4.8 of \cite{khasminski} contains remarks that allow the
restriction to an invariant domain such as $]0,+\infty[^d $.

To prove Condition (B.2), we establish
some preliminary results using the systems
\eqref{eq:xu}-\eqref{eq:yu} and \eqref{eq:xdd}-\eqref{eq:ydd} of
\Cref{subsec:comparison}.
Let us first set some notations:
%%%%%%%%%%%%%%%%%%%%%%%%%%%%%%%%%%%%%
For $r,R,x_0,y_0>0$, we denote
\begin{align*}
  \txu{R}(x_0)=&\inf\{t\geq 0\tq \xu(t)< R\},\\
  \tyu{R}(x_0,y_0)=&\inf\{t\geq 0\tq \yu(t)< R\},\\
  \txdd{r}(x_0)=&\inf\{t\geq 0\tq x(t)> r\},\\
  \tydd{r}(y_0)=&\inf\{t\geq 0\tq \ydd(t)> r\},
\end{align*}
where $\inf\emptyset=+\infty$,
$\xu$, $\yu$, and $\ydd$ are the solutions to
\eqref{eq:xu},
\eqref{eq:yu}, and
\eqref{eq:ydd} respectively, and $x$ is the first component of the solution to
\eqref{eq:stochastic} starting from $(x_0,y_0)$.
Note that, since $\yu$ depends on $\xu$, the hitting time $\tyu{R}$
depends on $(x_0,y_0)$.

%%%%%%%%%%%%%%%%%%%%%%%%%%%%%%%%%%%%%
Since \eqref{eq:xu} and \eqref{eq:ydd} are stochastic logistic equations,
the proof of \cite[Theorem 3.2]{liu-shen2015} shows the following:
%%%%%%%%%%%
\begin{lemma}\label{lem:hitting-xu-ydd}
Assume that $0<\sigma_1^2<2$. There exists $R_1>0$ sufficiently large such that
$\expect\CCO{\txu{R_1}(x_0)}$ is finite and uniformly bounded on compact subsets
of $[R_1,+\infty[$.

Assume that $0<\sigma_2^2<2b$.
There exists $r_2>0$ sufficiently small such that
$\expect\CCO{\tydd{r_2}(y_0)}$ is finite and uniformly bounded on
compact subsets of $]0,r_2]$.
\end{lemma}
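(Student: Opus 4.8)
The plan is to treat each assertion as a one-dimensional problem for the logistic diffusions \eqref{eq:xu} and \eqref{eq:ydd}, using $\log$ as a Lyapunov function and exploiting the fact that, under the stated noise conditions, the drift of $\log\xu$ (resp.\ $\log\ydd$) has a definite sign on the region the process occupies \emph{before} the hitting time. A stopped-expectation (Dynkin-type) argument then converts this sign information into an explicit bound on the expected hitting time.

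For the first assertion, assume $0<\sigma_1^2<2$. Itô's formula applied to \eqref{eq:xu} yields
\begin{equation*}
d\log\xu(t)=\CCO{1-\frac{\sigma_1^2}{2}-\xu(t)}\,dt+\sigma_1\,dw_1(t).
\end{equation*}
I would then fix $R_1>1-\sigma_1^2/2$ (any $R_1>1$ works) and set $c_1:=R_1-1+\sigma_1^2/2>0$ and $\tau:=\txu{R_1}(x_0)$. For $t<\tau$ one has $\xu(t)\geq R_1$, so the drift above is $\leq-c_1$. Integrating up to the bounded stopping time $t\wedge\tau$, taking expectations, and using that $\sigma_1 w_1(t\wedge\tau)$ has zero mean by optional stopping, I obtain
\begin{equation*}
\log R_1\leq\expect\log\xu(t\wedge\tau)\leq\log x_0-c_1\,\expect(t\wedge\tau),
\end{equation*}
the left inequality coming from $\xu(t\wedge\tau)\geq R_1$. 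Hence $\expect(t\wedge\tau)\leq c_1^{-1}\log(x_0/R_1)$ for every $t$, and monotone convergence as $t\to\infty$ gives $\expect\CCO{\txu{R_1}(x_0)}\leq c_1^{-1}\log(x_0/R_1)<\infty$. Since this depends on $x_0$ only through $\log(x_0/R_1)$, it is uniformly bounded on compact subsets of $[R_1,+\infty[$.

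The second assertion is symmetric. Assume $0<\sigma_2^2<2b$; Itô's formula applied to \eqref{eq:ydd} gives
\begin{equation*}
d\log\ydd(t)=\CCO{b-\frac{\sigma_2^2}{2}-\frac{b\,\ydd(t)}{k_2}}\,dt+\sigma_2\,dw_2(t).
\end{equation*}
As $b-\sigma_2^2/2>0$, I would choose $r_2>0$ small enough that $c_2:=b-\sigma_2^2/2-br_2/k_2>0$, i.e.\ any $r_2<k_2(1-\sigma_2^2/(2b))$. Writing $\tau':=\tydd{r_2}(y_0)$, for $t<\tau'$ one has $\ydd(t)\leq r_2$, so the drift is $\geq c_2$, and the identical stopped-expectation argument produces
\begin{equation*}
\log r_2\geq\expect\log\ydd(t\wedge\tau')\geq\log y_0+c_2\,\expect(t\wedge\tau'),
\end{equation*}
whence $\expect\CCO{\tydd{r_2}(y_0)}\leq c_2^{-1}\log(r_2/y_0)<\infty$, uniformly bounded on compact subsets of $]0,r_2]$ (which are bounded away from $0$).

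The only delicate points are the choice of thresholds making the drift of the logarithm strictly signed — this is exactly where the hypotheses $\sigma_1^2<2$ and $\sigma_2^2<2b$ are used, so that $R_1$ can be taken above, and $r_2$ below, the relevant critical value — and the vanishing of the martingale term at the stopped time, which follows from optional stopping applied to the bounded stopping time $t\wedge\tau$. Uniformity over compacts is then automatic from the explicit logarithmic bounds. This is precisely the recurrence estimate underlying the proof of \cite[Theorem 3.2]{liu-shen2015}.
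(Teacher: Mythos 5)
Your proof is correct, but it takes a different route from the paper, for the simple reason that the paper offers no self-contained argument here: it observes that \eqref{eq:xu} and \eqref{eq:ydd} are stochastic logistic equations and delegates entirely to the proof of \cite[Theorem 3.2]{liu-shen2015}, remarking only that the cited argument even yields a two-sided version (each process reaches a bounded interval $]r,R[$ in finite expected time), of which the lemma is the one-sided special case. What you do differently is make the underlying recurrence estimate explicit and elementary: It\^o's formula for $\log$, a drift that is $\leq -c_1$ (resp.\ $\geq c_2$) on the region occupied before the hitting time, optional stopping at the bounded time $t\wedge\tau$, and monotone convergence. Both computations of the drift of $\log\xu$ and $\log\ydd$ are right, the bound $\xu(t\wedge\tau)\geq R_1$ (resp.\ $\ydd(t\wedge\tau')\leq r_2$) justified by path continuity is the correct way to close the inequality, and the resulting explicit bounds $c_1^{-1}\log(x_0/R_1)$ and $c_2^{-1}\log(r_2/y_0)$ make the uniformity over compact subsets of $[R_1,+\infty[$ and of $]0,r_2]$ immediate. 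This is the same Lyapunov--Dynkin mechanism the paper itself deploys for the companion \Cref{lem:hitting-yu} (there with $V(u,v)=1/u+u+1/v+\log v$ and $LV\leq -1$), so your argument fits naturally into the paper's toolbox; what the paper's citation buys instead is brevity and the stronger two-sided statement, neither of which is needed for \Cref{theo:stationary}.

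One small correction to your closing commentary: the hypothesis $0<\sigma_1^2<2$ is not actually used in your first estimate. The drift of $\log\xu(t)$ is $1-\sigma_1^2/2-\xu(t)\leq 1-R_1<0$ whenever $\xu(t)\geq R_1>1$, for every value of $\sigma_1$, since the noise only pushes this drift further down; the downward hitting estimate is therefore unconditional. It is only the second hypothesis, $\sigma_2^2<2b$, that is genuinely needed in your argument, to make the critical threshold $k_2\bigl(1-\sigma_2^2/(2b)\bigr)$ positive so that $r_2$ can be chosen below it. (The condition $\sigma_1^2<2$ matters elsewhere in the paper, e.g.\ to prevent $\xu$ from converging to $0$, not for this lemma.)
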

Note that the proof of \cite[Theorem 3.2]{liu-shen2015} provides a
two-sided version of \Cref{lem:hitting-xu-ydd} (that is, each of
the processes $\xu$ and $\ydd$ hits an interval of the form $]r,R[$
in finite time), but we only need the one-sided version stated here.

%%%%%%%%%
\begin{lemma}\label{lem:hitting-xdd}
  Assume that $0<\sigma_1^2<2$.
There exists $r_1$ sufficiently small such that
$\expect(\txdd{r_1}(y_0))$ is finite and uniformly bounded on compact subsets
of $]0,r_1]$.
\end{lemma}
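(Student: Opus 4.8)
The plan is to reduce the statement to the one-dimensional stochastic logistic theory already invoked in \Cref{lem:hitting-xu-ydd}, by using the refuge. Since $m>0$, in the strip $\{x<m\}$ the predation term $\tfrac{a y(x-m)_+}{k_1+(x-m)_+}$ vanishes identically, so there the prey equation of \eqref{eq:stochastic} is the autonomous logistic equation $dx=x(1-x)\,dt+\sigma_1 x\,dw_1$, that is, \eqref{eq:xu}. I would accordingly fix once and for all a level $r_1$ with $0<r_1<m$.

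Next I would observe that, as long as the prey stays below $m$, the prey component $x$ of \eqref{eq:stochastic} coincides with the comparison process $\xu$ of \eqref{eq:xu} started at the same point and driven by the same $w_1$; together with $x\le\xu$ from \eqref{eq:comparison-xu}, this identifies $\txdd{r_1}(y_0)$ with the upward first-passage time of the logistic process \eqref{eq:xu} at the level $r_1$. Since $0<\sigma_1^2<2$, this is exactly the one-sided situation handled for $\ydd$ in \Cref{lem:hitting-xu-ydd}: by the two-sided version contained in the proof of \cite[Theorem 3.2]{liu-shen2015}, the expectation of this passage time is finite and uniformly bounded as the starting value ranges over a compact subset of $]0,r_1]$ (on such a compact the logistic drift $x(1-x)$ is bounded below, as the set is bounded away from $0$, and drives $x$ upward). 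This gives the asserted uniform bound on $\expect\CCO{\txdd{r_1}(y_0)}$.

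The point that needs care is to see why the refuge is genuinely needed, rather than the crude lower comparison $\xdd$ of \eqref{eq:xdd}. The inequality $\xdd\le x$ of \eqref{eq:comparison-xdd} would formally turn the claim into an upward passage for $\xdd$; but the additive drift $-a\yu(t)$ in \eqref{eq:xdd} makes $\xdd$ behave near $0$ like $d\xdd\approx-a\yu\,dt$, and as $\yu$ is typically of order $k_2$ the drift of $\xdd$ is negative for small $\xdd$, so $\xdd$ may never reach $r_1$. The refuge removes exactly this obstruction by forcing the true prey dynamics below $m$ to be purely logistic, so that the passage is governed by \eqref{eq:xu} and the logistic first-passage machinery of \cite{liu-shen2015} applies; the only remaining technical issue, uniformity at the lower end of $]0,r_1]$, is furnished by that machinery being uniform on compacts bounded away from $0$.
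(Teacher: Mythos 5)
Your proof is correct and takes essentially the same approach as the paper: both exploit that for $x$ below the refuge level the term $(x-m)_+$ vanishes, so the prey component coincides pathwise with the stochastic logistic process, and then invoke the first-passage bound from the proof of \cite[Theorem 3.2]{liu-shen2015}. The only adjustment needed is the order of quantifiers in choosing the level: since that result guarantees the bound only at its own sufficiently small level $r$, you should set $r_1=\min\{r,m\}$ (as the paper does) rather than fixing an arbitrary $r_1<m$ in advance and then applying the machinery at that level.
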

%%%%%%%%%
\begin{proof}
We use the fact that, when $x<m$, $x$ coincides with a process $z$
solution to the stochastic logistic equation
\begin{equation*}
  dz(t)=z(1-z)dt+\sigma_1zdw_1(t).
\end{equation*}
The proof of \cite[Theorem 3.2]{liu-shen2015} provides a number $r>0$
such that the expectation of the hitting time of $]r,+\infty[$ by $z$
is finite and uniformly bounded on each compact subset of $]0,r]$.
Then, we only need to take $r_1=\min\{r,m\}$.
\end{proof}

%%%%%%%%%%%%%%%%%%%%%%%%%%%%%%%%%%%%%
\begin{lemma}\label{lem:hitting-yu}
There exists $R_2$ sufficiently large such that
$\expect(\tyu{(R_2)}(x_0,y_0))$ is finite and uniformly bounded on
compact subsets
of $]0,+\infty[\times [R_2,+\infty[$.
\end{lemma}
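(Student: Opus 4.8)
The plan is to treat the pair $(\xu,\yu)$ as a two-dimensional autonomous diffusion and to produce a single Lyapunov function for its generator, thereby avoiding any attempt to analyse $\yu$ in isolation. Indeed, $\xu$ solves the autonomous equation \eqref{eq:xu} and $\yu$ solves \eqref{eq:yu}, whose coefficients depend only on $\xu$; hence $(\xu,\yu)$ is a Markov process on $]0,+\infty[\times]0,+\infty[$ with generator
\[
\mathcal{L}=x(1-x)\partial_x+by\CCO{1-\frac{y}{k_2+x}}\partial_y+\frac{\sigma_1^2x^2}{2}\partial_{xx}+\frac{\sigma_2^2y^2}{2}\partial_{yy},
\]
there being no mixed second-order term since $w_1$ and $w_2$ are independent. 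I would look for a function $U\geq 0$ on the region $D=\,]0,+\infty[\times[R_2,+\infty[$ with $\mathcal{L}U\leq -1$ on $D$, and then read off the bound on $\expect(\tyu{R_2})$ from Dynkin's formula applied to the exit of $D$.

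The key step is the choice $U(x,y)=x+\log y$. A direct computation gives
\[
\mathcal{L}U(x,y)=x(1-x)+b\CCO{1-\frac{y}{k_2+x}}-\frac{\sigma_2^2}{2},
\]
so that, for $y\geq R_2$,
\[
\mathcal{L}U(x,y)\leq x(1-x)-\frac{bR_2}{k_2+x}+b-\frac{\sigma_2^2}{2}.
\]
I would then check that $\sup_{x>0}\bigl(x(1-x)-bR_2/(k_2+x)\bigr)\to-\infty$ as $R_2\to+\infty$: since $x(1-x)\leq 1/4$ everywhere while $x(1-x)\leq-R_2/2$ as soon as $x\geq\sqrt{R_2}$, and $bR_2/(k_2+x)\geq bR_2/(k_2+\sqrt{R_2})$ for $x\leq\sqrt{R_2}$, the supremum is at most $\max\bigl(1/4-bR_2/(k_2+\sqrt{R_2}),\,-R_2/2\bigr)$, which tends to $-\infty$. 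Choosing $R_2$ large enough (and $\geq 1$, so that $U\geq\log R_2\geq 0$ on $D$) then yields $\mathcal{L}U\leq -1$ on all of $D$.

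Finally, applying It\^o's formula to $U(\xu,\yu)$ stopped at $t\wedge\tyu{R_2}$, the martingale contributions $\int_0^{t\wedge\tyu{R_2}}\sigma_1\xu\,dw_1$ and $\sigma_2 w_2(t\wedge\tyu{R_2})$ have zero expectation (the first because $\expect\int_0^t\xu(s)^2\,ds<\infty$ by \eqref{moment}, or after a routine localization), so that
\[
0\leq\expect U\bigl(\xu(t\wedge\tyu{R_2}),\yu(t\wedge\tyu{R_2})\bigr)\leq U(x_0,y_0)-\expect(t\wedge\tyu{R_2}).
\]
Letting $t\to+\infty$ and using monotone convergence gives $\expect(\tyu{R_2})\leq U(x_0,y_0)=x_0+\log y_0$, which is finite and, being continuous in $(x_0,y_0)$, is uniformly bounded on every compact subset of $]0,+\infty[\times[R_2,+\infty[$. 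The main obstacle here is conceptual rather than computational: a naive attempt to control $\yu$ by itself founders on the factor $k_2+\xu$ in its denominator, which is unbounded and strongly correlated with the hitting time, so no pathwise logistic comparison is available. Passing to the joint generator removes this difficulty, since the prey's own quadratic self-limitation (the $x(1-x)$ term, acting through the linear part $x$ of $U$) supplies the decisive negativity precisely in the large-$x$ regime, where the predator's damping $-bR_2/(k_2+x)$ degenerates.
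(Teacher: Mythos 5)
Your proof is correct, and its skeleton is the same as the paper's: treat $(\xu,\yu)$ as an autonomous two-dimensional diffusion, exhibit a function whose image under the generator is $\leq -1$ on the region $\{\yu\geq R_2\}$ for $R_2$ large, and conclude by Dynkin's formula. Where you genuinely differ is in the choice of Lyapunov function, and yours is better adapted to this one-sided statement. The paper takes $V(u,v)=\frac{1}{u}+u+\frac{1}{v}+\log v$; the terms $\frac{1}{u}$ and $\frac{1}{v}$ are superfluous here (they would matter only for a two-sided hitting problem, where one must also prevent escape towards $0$), and they force a two-case analysis $\xu\leq\rho$ versus $\xu>\rho$ with ad hoc thresholds such as $\max\{4,1/(b+\sigma_1^2),(1+2/b)(k_2+\rho)\}$. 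Your $U(x,y)=x+\log y$ keeps exactly the two terms that do the work: the linear part turns the prey drift $x(1-x)$ into the dominant negative contribution in the large-$x$ regime, while $\log y$ converts the predator drift into $b-\frac{by}{k_2+x}$, which dominates for bounded $x$; note that the same large-$x$/bounded-$x$ dichotomy (your split at $x=\sqrt{R_2}$, the paper's at $x=\rho$) is the common mechanism of both proofs. Your route buys a shorter verification and the clean explicit bound $\expect\CCO{\tyu{R_2}(x_0,y_0)}\leq x_0+\log y_0$, which makes the uniform boundedness on compacts immediate. One small repair: \eqref{moment} is the moment bound for the first component $x(t)$ of \eqref{eq:stochastic}, not for $\xu$; what your martingale argument needs is the uniform $L^p$ bound on $\xu$ itself, which the paper states immediately before \eqref{moment} (citing \cite[Lemma 2.2]{liu-shen2015}) — or, as you say, a routine localization, since $U\geq 0$ on the region and the drift estimate has a sign. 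With that substitution the argument is complete.
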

%%%%%%%%%
\begin{proof}
Let us set, for $u,v>0$,
\begin{equation*}
  V(u,v)=\frac{1}{u}+u+\frac{1}{v}+\log(v).
\end{equation*}
We have $V(u,v)\geq V(1,1)>0$.
Let $L$ be the infinitesimal operator (or Dynkin operator) of the
system \eqref{eq:xu}-\eqref{eq:yu}. We have
\begin{align*}
  LV(\xu,\yu)=
                  & \xu(1-\xu)\CCO{1-\frac{1}{\xu^2}}
                        +\frac{\sigma_1^2}{\xu}\\
                  & +b\yu \CCO{ 1-\frac{\yu}{k_2+\xu} }
                    \CCO{-\frac{1}{\yu^2}+\frac{1}{\yu} }
                    +\frac{\sigma_2^2}{2}\CCO{\frac{2}{\yu}-1 }\\
  =&-\frac{1+\xu}{\xu}\bigl((\xu-1)^2-\sigma_1^2 \bigr)-\sigma_1^2\\
   & +b\frac{\yu-1}{\yu}\,\frac{k_2+\xu-\yu}{k_2+\xu}
     +\frac{\sigma_2^2}{2}\frac{2-\yu}{\yu}.
\end{align*}
Let $\rho\geq 1$ such that
\begin{equation*}
  \xu>\rho\Rightarrow (\xu-1)^2-\sigma_1^2 >b\xu.
\end{equation*}
For $\xu>\rho$ and $\yu>\max\{4,1/(b+\sigma_1^2)\}$, we get
$(2-v)/v\leq -1/2$ and
\begin{equation*}
  LV(\xu,\yu)
  \leq -\frac{1+\xu}{\xu}\,b\xu-\sigma_1^2
    +b-\frac{\sigma_2^2}{4}%\\
  =-b\xu-\sigma_1^2-\frac{\sigma_2^2}{4}
  \leq-\frac{b+\sigma_1^2}{b+\sigma_1^2}-\frac{\sigma_2^2}{4}
        <-1.
\end{equation*}
On the other hand, there exists a number $K\geq 0$ such that
\begin{equation*}
  \xu\leq \rho\Rightarrow (\xu-1)^2-\sigma_1^2 \leq K.
\end{equation*}
For $\xu\leq\rho$ and $\yu\geq \max\{4,(1+2/b)(k_2+\rho)\}$, we have
$(v-1)/v\geq 3/4$ and
$(k-2+\rho-v)/(k_2+\rho)\leq -2/b$, thus
\begin{align*}
 LV(\xu,\yu)
 \leq &-\frac{1+\rho}{\rho}K -\sigma_1^2
 +b\frac{\yu-1}{\yu}\,\frac{k_2+\rho-\yu}{k_2+\rho}
        -\frac{\sigma_2^2}{4}\\
  \leq & -\frac{1+\rho}{\rho}K -\sigma_1^2
  +b\times\frac{3}{4}\times\frac{-2}{b}-\frac{\sigma_2^2}{4}
 \\
 < &-1.
\end{align*}
Let $R_2=\max\{4,1/(b+\sigma_1^2),(1+2/b)(k_2+\rho)\}$.
For every $y_0>R_2$ and every $x_0>0$, we have $LV(\xu,\yu)<-1$.
Denote for simplicity $\tau=\tyu{R_2}(x_0,y_0)$.
We have
\begin{multline*}
  0\leq \expect^{(x_0,y_0)}V(\xu(\tau),\yu(\tau))\\
  =V(x_0,y_0)+\expect^{(x_0,y_0)}\int_0^{\tau}LV(\xu(s),\yu(s))ds
  \leq V(x_0,y_0)-\expect(\tau),
\end{multline*}
which proves that $\expect(\tau)\leq V(x_0,y_0)<\infty$.
\end{proof}

%%%%%%%%%%%%
%\preuvof{Theorem \cref{theo:stationary}}
\begin{proof}[Proof of \Cref{theo:stationary}]
Condition (B.1) of \Cref{lem:khasminskii} is trivially
statisfied.

To prove Condition (B.2), with the notations of
\Cref{lem:hitting-xu-ydd,lem:hitting-xdd,lem:hitting-yu},
taking into account
the inequalities \eqref{eq:comparison-xu},
\eqref{eq:comparison-yu}, and
\eqref{eq:comparison-ydd},
we only need to take $r$ and $R$ such that $0<r<R$, $r\leq
\min\{r_1,r_2\}$,
$R\geq\max\{R_1,R_2\}$, and
$U=]r,R[\times]r,R[$.
\end{proof}
%%%%

%%%%%%%%%%%%%%%%%%%%%%%%%%%%%%%%%%%%%%%%%%%%%%%%%%%%%%%%%%%%%%%%%%%%%%%%%%%%%%%
\section{Numerical simulations and figures}\label{sec:simul}
All simulations and pictures of this section are obtained using Scilab.
	\subsection{Deterministic system}
	We numerically simulate solutions to System
        \eqref{eq:simple}. Using the Euler scheme, we consider the
        following discretized system:
	\begin{equation}
		\begin{aligned}
         x_{k+1}=&\ x_{k}+\croche{x_{k}(1-x_{k})-\frac{a y_{k}(x_{k}-m)}{k_{1}+x_{k}-m}}h ,\\
		 y_{k+1}=&\,y_{k}+b y_k \croche{1-\frac{y_k}{k_{2}+x_{k}-m} }h.		
			\end{aligned}
                      \end{equation}
         Simulations are shown in \Cref{fig:3points-cycle,Fig1cycle1pt,hopf}.
	%%%%%%%%%%%%%%%%%%%%% 1. FIGURES 3 POINTS %%%%%%%%%%%%
	\begin{figure}[!h]%[tbhp]
          \centering
	%	\begin{multicols}{2}
		\includegraphics[width=1\textwidth]{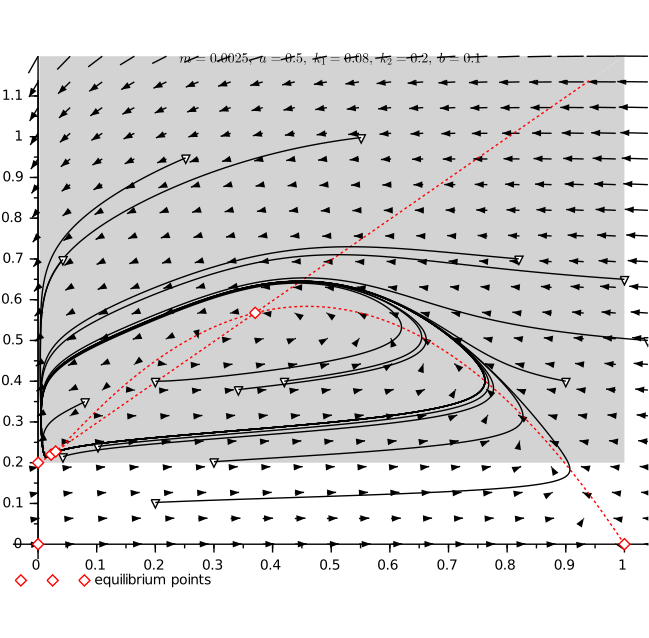}
			\caption{A phase portrait of \eqref{eq:simple}
                          with three equilibrium points and a cycle
			in the interior of $\invrg$.
                      The dashed lines are isoclines
			$y=\frac{x(1-x)(k_1+x-m)}{a(x-m)}$
 			and $y=k_2+x-m$.
                The grey region is the invariant attracting domain $\invrg$.\\
                      $m=0.0025$, $a=0.5$, $k_1=0.08$, $k_2=0.2$, $b=0.1$.}
\label{fig:3points-cycle}%% label apres caption sinon pas pris en compte
						\end{figure}

%%%%%%%%%%%%%%%%%%%%% 2. CYCLE AVEC PT INSTABLE %%%%%%%%%%%%%%%%%%%%%%%%%%%%%%%%%%
	\begin{figure}[!h]%[tbhp]%[ht!]%[tbhp]
	      \includegraphics[width=1\textwidth]{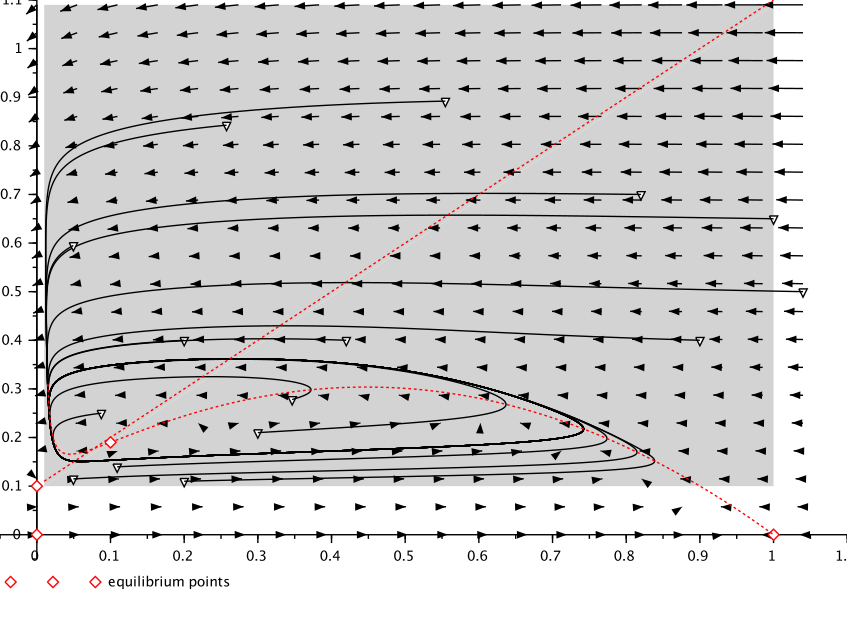}
		\caption{A phase portrait of \eqref{eq:simple} with an
                  unstable equilibrium and a stable limit cycle.\\
                $m=0.01$, $a=1$, $k_1=0.1$, $k_2=0.1$, $b=0.05$.}
		\label{Fig1cycle1pt}
	\end{figure}
%%%%%%%%%%%%  3. HOPF %%%%%%%%%%%%%%%%%%%%%%%%%%%%%%%%%%%%%%%%%%%%%%
%  \begin{center}
	\begin{figure}[!h]%[tbhp]%[ht!]
\centering
\begin{subfigure}{1\linewidth}%\label{fig:a}
    \includegraphics[width=1\textwidth]{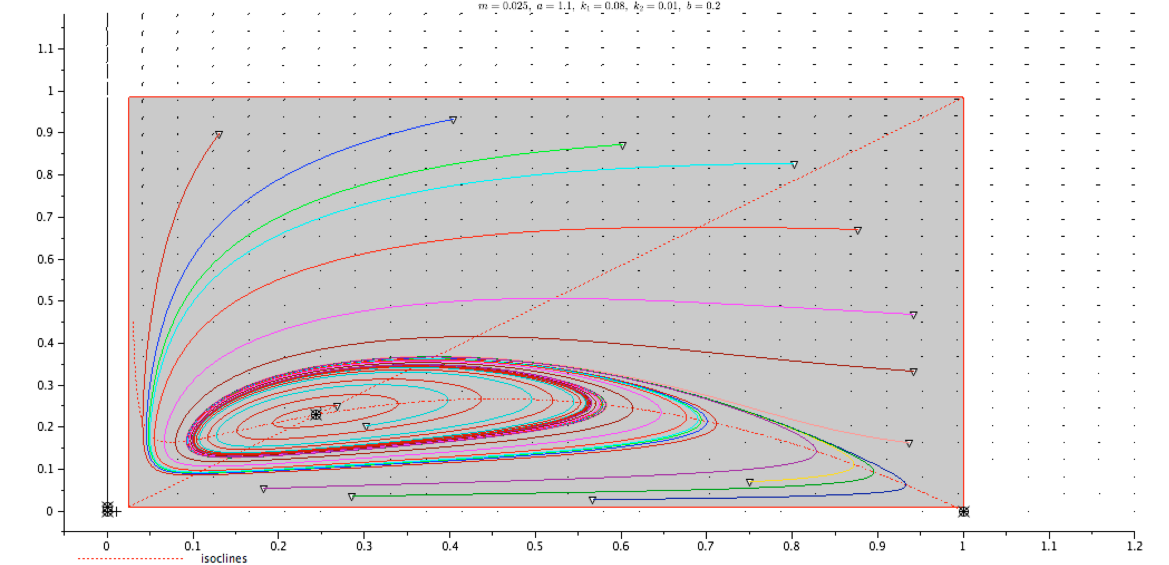}
    \caption{$\lambda<0$ (semi hyperbolic case):
    $m=0.0025$, $a=1.1$, $k_1=0.08$, $k_2=0,01$, $b=0.2$.}
\end{subfigure}
\begin{subfigure}{1\linewidth}%\label{fig:b}
    \includegraphics[width=1\textwidth]{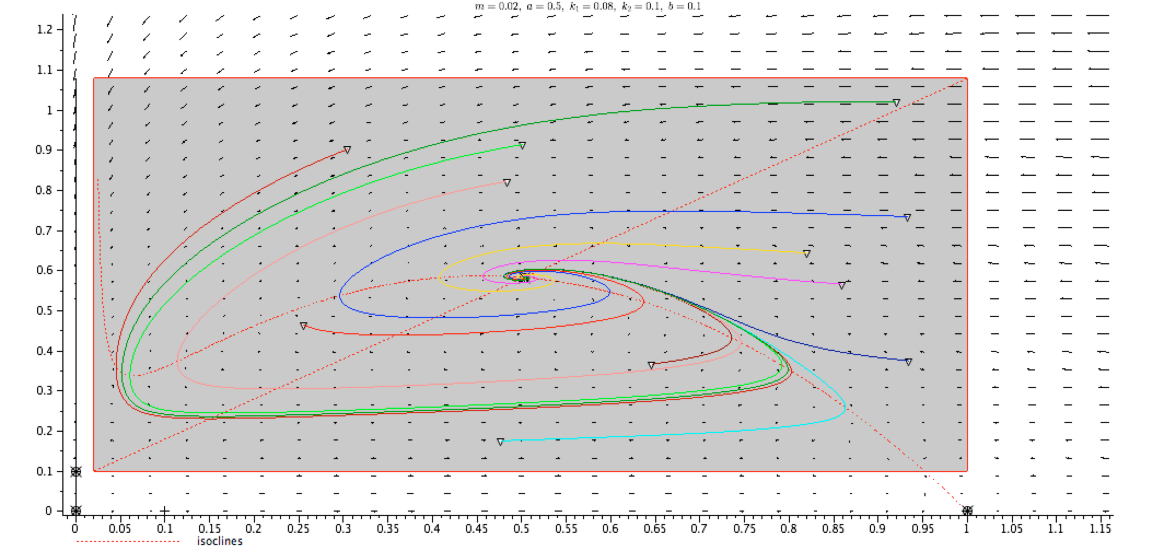}
    \caption{$\lambda>0$ (semi hyperbolic case):
    $m=0.002$, $a=0.5$, $k_1=0.08$, $k_2=0.1$, $b=0.1$.}
  \end{subfigure}
  \caption{Hopf bifurcation of the system \eqref{eq:simple}.
          }
  \label{hopf}
  %% label apres caption sinon pas pris en compte
	\end{figure}

%%%%%%%%%%%%%%%%%%%%%%%%%%%%%%%%%%%%%%%%%%%%%%%%%%%
	\subsection{Stochastically perturbated system}
	We numerically simulate the solution to System
        \eqref{eq:stochastic}. Using the Milstein scheme (see
        \cite{kloeden-platen}), we consider the discretized
        system
	\begin{equation}
		\begin{aligned}
         x_{k+1}=&\ x_{k}+\croche{x_{k}(1-x_{k}-\frac{a y_{k}}{k_{1}+x_{k}-m})}h
         + \sigma_{1}\, x_{k} \sqrt{h}\, \xi^2_{k}+\frac{1}{2}\sigma_{1}^2 x_{k}(h \,\xi^2_{k}-h ) ,\\
		 y_{k+1}=&\,y_{k}+b y_k \croche{1-\frac{y}{k_{2}+x} }h+\sigma_{2} y_{k} \sqrt{h}\, \xi^2_{k}+\frac{1}{2}\sigma_{2}^2 y_{k}(h \,\xi^2_{k}-h ),		
			\end{aligned}
                      \end{equation}
       where $(\xi_k)$ is an i.i.d.~sequence of normalized centered
       Gaussian variables.
       
       Simulations of the stochastically perturbated case are shown in
       \Cref{fig:stoch}.
	% In \Cref{fig:stoch},
        % we choose $a=0.4$, $k_{1}=0.08$, $k_{2}=0.2$, $b=0.1$,
        % $m=0.0025$, the initial value
        % $(x(0), y(0))=(0.55,0.6),$ and the time step $h=0.01.$ The
        % deterministic model has a globally stable equilibrium point
        % $(x^*,y^*)=(0.55,0.75)$.
These simulations  show the permanence
        of the system \eqref{eq:stochastic}.
%%%%%%%%%%%%%%%%%%%%%%%%%%%%%%%%%%%%%%%%%%%%%%%%%%%%%%%%%%%%%%%%%%%%%%%%%%%%
        \begin{figure}[!h]%[tbhp]%[ht!]%[H]
          \centering
  \begin{subfigure}{.8\linewidth}%\label{fig:a}
    \includegraphics[width=.8\textwidth]{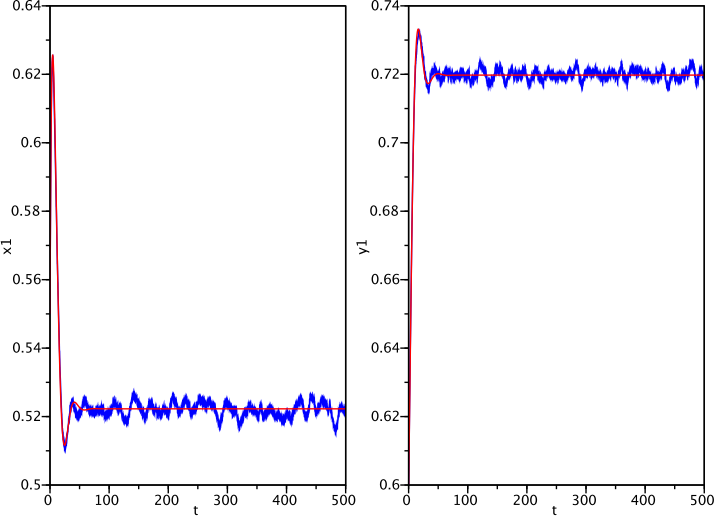}%\\
\caption{$\sigma_{1}=0.01, \sigma_{2}=0.01$}
\end{subfigure}
%%%%%%%%%%%%%%%%%
\begin{subfigure}{.8\linewidth}%\label{fig:b}
  \includegraphics[width=.8\textwidth]{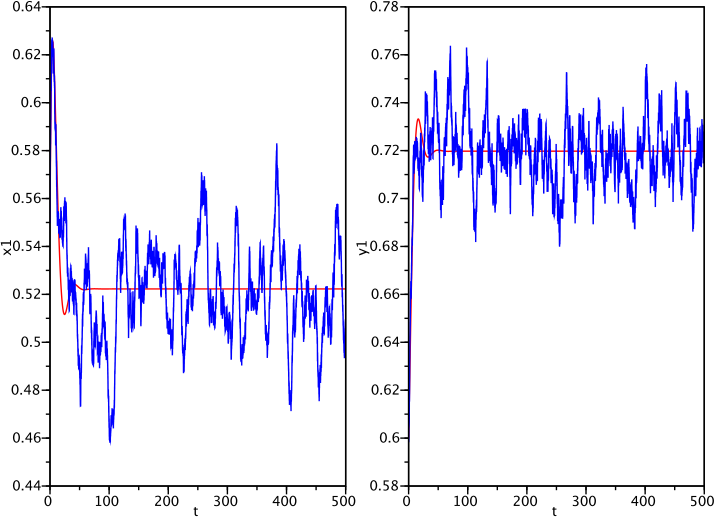}
\caption{$\sigma_{1}=0.3$, $\sigma_{2}=0.2$}
\end{subfigure}

			\caption{Solutions to the stochastic system
          \eqref{eq:stochastic} and the corresponding deterministic
          system, represented respectively by the blue line and the
          red line. \\
$a=0.4$, $k_{1}=0.08$, $k_{2}=0.2$, $b=0.1$,
        $m=0.0025$, the initial value
        $(x(0), y(0))=(0.55,0.6),$ and the time step $h=0.01.$ The
        deterministic model has a globally stable equilibrium point
        $(x^*,y^*)=(0.55,0.75)$.}
        \label{fig:stoch}
 \end{figure}		
%%%%%%%%%%%%%%%%%%%%%%%%%%%%%%%%%%%%%%%%%%%%%%%%%%%%%%%%%%%%%%%%%%%%%%%%%%%	
 \FloatBarrier %from package placeins
 \section*{Acknowledgments} We thank an anonymous referee for his
 useful comments.
	%%%%%%%%%%%%%%%%%%%%%%%%%%%%%%%%%%%%%%%%%%%%%%%%
	%\bibliographystyle{AIMS}\bibliography{Holling2}
	%%%%%%%%%%%%%%%%%%%%%%%%%%%%%%%%%%%%%%%%%%%%%%%%
        %%\printbibliography
\def\cprime{$'$}
\providecommand{\href}[2]{#2}
\providecommand{\arxiv}[1]{\href{http://arxiv.org/abs/#1}{arXiv:#1}}
\providecommand{\url}[1]{\texttt{#1}}
\providecommand{\urlprefix}{URL }

\textit{email address}: {slimani\_safia@yahoo.fr}\\
\textit{email address}: {prf@univ-rouen.fr}\\
\textit{email address}: {islam.boussaada@l2s.centralesupelec.fr }
 % \email{slimani\_safia@yahoo.fr}% ORCID 0000-0002-3884-5821
 % \email{prf@univ-rouen.fr}% ORCID 0000-0001-5527-9393
 % \email{islam.boussaada@l2s.centralesupelec.fr }
	 
\end{document}